\documentclass[reqno, 12pt]{amsart}
\makeatletter
\let\origsection=\section \def\section{\@ifstar{\origsection*}{\mysection}} 
\def\mysection{\@startsection{section}{1}\z@{.7\linespacing\@plus\linespacing}{.5\linespacing}{\normalfont\scshape\centering\S}}
\makeatother        
\usepackage{amsmath,amssymb,amsthm}    
\usepackage{mathrsfs}    
\usepackage{mathabx}\changenotsign

\usepackage{xcolor}
\usepackage[backref]{hyperref}
\hypersetup{
    colorlinks,
    linkcolor={red!60!black},
    citecolor={green!60!black},
    urlcolor={blue!60!black}
}

\usepackage[abbrev,msc-links,backrefs]{amsrefs}

\usepackage[T1]{fontenc}
\usepackage{lmodern}

\usepackage[english]{babel}

\linespread{1.3}
\usepackage{geometry}
\geometry{left=27.5mm,right=27.5mm, top=25mm, bottom=25mm}

\usepackage{enumitem}
\def\rmlabel{\upshape({\itshape \roman*\,})}

\let\polishlcross=\l
\def\l{\ifmmode\ell\else\polishlcross\fi}

\let\setminus=\smallsetminus

\newtheorem{theorem}{Theorem}
\newtheorem{lemma}[theorem]{Lemma}
\newtheorem*{lemma*}{Lemma}
\newtheorem{claim}[theorem]{Claim}

\newtheorem{corollary}[theorem]{Corollary}
\newtheorem{fact}[theorem]{Fact}
\newtheorem{conj}[theorem]{Conjecture}

\theoremstyle{definition}
\newtheorem{definition}[theorem]{Definition}

\theoremstyle{remark}
\newtheorem{remark}[theorem]{Remark}

\let\theta=\vartheta
\let\rho=\varrho
\let\phi=\varphi

\def\EE{\mathbb E}
\def\NN{\mathbb N}

\def\PP{\mathbb P}

\def\RR{\mathbb R}

\def\cA{{\mathcal A}}
\def\cH{{\mathcal H}}

\def\cF{{\mathcal F}}
\def\cB{{\mathcal B}}
\def\cC{{\mathcal C}}
\def\cD{{\mathcal D}}
\def\cP{{\mathcal P}}

\def\cE{{\mathcal E}}

\def\cS{{\mathcal S}}
\def\cT{{\mathcal T}}

\DeclareMathOperator{\ee}{e}

\newcommand{\Gnp}{{G_{n,p}}}
\newcommand{\GnpOne}{{G_{n,p_1}}}
\newcommand{\GnpTwo}{{G_{n,p_2}}}
\newcommand{\seq}{\subseteq}
\renewcommand{\Pr}{\PP}
\newcommand{\prob}[1]{\Pr\left( #1 \right)}
\newcommand{\mTwo}{\ensuremath{m_2}}
\newcommand{\dTwo}{\ensuremath{d_2}}
\newcommand{\wrt}{w.r.t.\ }
\newcommand{\eqBy}[1]{\mathrel{\mathnormal{\stackrel{\eqref{#1}}{=}}}}
\newcommand{\eqByM}[1]{\mathrel{\mathnormal{\stackrel{#1}{=}}}}
\newcommand{\geBy}[1]{\mathrel{\mathnormal{\stackrel{\eqref{#1}}{\ge}}}}
\newcommand{\geByM}[1]{\mathrel{\mathnormal{\stackrel{#1}{\ge}}}}

\newcommand{\leBy}[1]{\mathrel{\mathnormal{\stackrel{\eqref{#1}}{\le}}}}
\newcommand{\leByM}[1]{\mathrel{\mathnormal{\stackrel{#1}{\le}}}}

\newcommand{\bigmid}[1][\middle]{\,#1|\,}
\newcommand{\Gbar}{\bar{G}}
\renewcommand{\(}{\left(}
\renewcommand{\)}{\right)}

\newcommand{\tn}{{\widetilde{n}}}
\newcommand{\tG}{\widetilde{G}}
\newcommand{\tcH}{\widetilde{\cH}}

\begin{document}

\title[Thresholds for asymmetric Ramsey properties]{Upper 
	bounds on probability thresholds for asymmetric Ramsey properties}

\author{Yoshiharu Kohayakawa}
\address{Instituto de Matem\'atica e Estat\'{\i}stica, Universidade de
  S\~ao Paulo, S\~ao Paulo, Brazil}  
\email{yoshi@ime.usp.br}

\author{Mathias Schacht}
\address{Fachbereich Mathematik, Universit\"at Hamburg, Hamburg, Germany}
\email{schacht@math.uni-hamburg.de}

\author{Reto Sp\"ohel}
\address{Algorithms \& Complexity Group, Max Planck Institute for Informatics, 
	Saar\-br\"ucken, Germany}
\email{rspoehel@mpi-inf.mpg.de}

\thanks{The first authors was
      partially supported by CNPq (Proc.~308509/2007-2 and
      Proc.~484154/2010-9). The collaboration
      of the first two authors was supported by a CAPESDAAD
      collaboration grant.  The second author was supported 
      through the Heisenberg-Programme of the Deutsche
      Forschungsgemeinschaft (DFG Grant SCHA 1263/4-1). The third 
      author was supported by a
      grant from the Swiss National Science Foundation and the research was
      partially carried out when the author was still at ETH Zurich.
      The authors are grateful to NUMEC/USP,
      N\'{u}cleo de Modelagem Estoc\'{a}stica e Complexidade of the
      University of S\~{a}o Paulo, for its hospitality.}

\begin{abstract} Given two graphs $G$ and $H$, we investigate for
  which functions $p=p(n)$ the random graph $\Gnp$ (the binomial
  random graph on~$n$ vertices with edge probability $p$) satisfies
  with probability $1-o(1)$ that every red-blue-coloring of its edges
  contains a red copy of $G$ or a blue copy of $H$. We prove a general
  upper bound on the threshold for this property under the assumption
  that the denser of the two graphs satisfies a certain balancedness
  condition. Our result partially confirms a conjecture by the first
  author and Kreuter, and together with earlier lower bound results
  establishes the exact order of magnitude of the threshold for the
  case in which~$G$ and $H$ are complete graphs of arbitrary size.

  In our proof we present an alternative to the so-called deletion
  method, which was introduced by R\"odl and Ruci\'{n}ski in their study
  of symmetric Ramsey properties of random graphs (i.e.\ the case
  $G=H$), and has been used in many proofs of similar results since
  then.
\end{abstract}

\maketitle

\section{Introduction} 

\subsection{Ramsey properties of random graphs}
Ramsey properties of random graphs were studied first by Frankl and
R\"odl~\cite{MR932121}, and much effort has been devoted to their
further investigation since then. Perhaps most notably, R\"odl and
Ruci{\'n}ski~\cites{MR1249720, MR1276825} established a general
threshold result that we present in the following.

For any two graphs $F$ and $H$, let  \[
  F \rightarrow (H)_k
\]
denote the property that every edge-coloring of~$F$ with~$k$ colors
contains a monochromatic copy of~$H$. Throughout, we denote the number of edges and vertices of a graph $G$
by $e_G$ and $v_G$ respectively (sometimes also by $e(G)$ and $v(G)$). We say that a graph is nonempty
if it has at least one edge.  For any graph $H$ we define
\begin{equation} \label{eq:def_d2}
d_2(H):=
\begin{cases} \displaystyle \frac{e_H-1}{v_H-2} &\text{if $v_H\geq 3$}\\
   1/2   &  \text{if $H\cong K_2$}\\
   0   &  \text{if $e_H=0$},
  \end{cases}\end{equation}
 and set
\begin{equation} \label{eq:def_m2}
m_2(H):=\max_{J\seq H}\; d_2(J)\,.
\end{equation}
We say that $H$ is \emph{$2$-balanced} if $m_2(H)=d_2(H)$, and
\emph{strictly $2$-balanced} if in addition $m_2(H)>d_2(J)$ for all
proper subgraphs $J\subsetneq H$.  With the notation above, a slightly
simplified version of the result of R\"odl and Ruci\'nski reads as
follows. (The lower bound proof given in~\cite{MR1249720} does not
cover the case where $J\seq H$ maximizing $d_2(J)$ is a triangle;
however, this case was settled earlier in~\cite{MR1182457}.)

Recall that in the binomial random graph~$\Gnp$ on~$n$ vertices, every
edge is present with probability~$0 \leq p = p(n) \leq 1$
independently of all other edges.

\begin{theorem}[R\"odl and Ruci{\'n}ski \cites{MR1249720,MR1276825}]
  \label{thm:Ramsey}
Let $k \geq 2$ and $H$ be a graph that is not a forest. Then there exist constants~$c$, $C > 0$ such that  
\[
  \lim_{n\to\infty}\prob{\Gnp \rightarrow (H)_k}
  =
  \begin{cases}
    0   &\text{if $p=p(n) \leq cn^{-1/\mTwo(H)}$}\\
    1   &\text{if $p=p(n) \geq Cn^{-1/\mTwo(H)}$},
 \end{cases}\]
where $m_2(H)$ is defined in~\eqref{eq:def_d2} and \eqref{eq:def_m2}.
\end{theorem}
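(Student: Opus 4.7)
The plan is to establish the $0$- and $1$-statements separately. For the $0$-statement, I would use the \emph{deletion method}: fix a strictly $2$-balanced subgraph $J\seq H$ with $d_2(J)=m_2(H)$, which exists since $H$ is not a forest. At $p=cn^{-1/m_2(H)}$, a direct moment calculation shows that the expected number of copies of $J$ in $\Gnp$ is $\Theta(n^{v_J}p^{e_J})=\Theta(n^2p)=\Theta(e(\Gnp))$, i.e.\ each edge lies in $O(1)$ copies of $J$ on average. Colour the edges of $\Gnp$ uniformly at random with $k$ colours; the expected number of monochromatic copies of $J$ is $O(c^{e_J-1}e(\Gnp))$, which is $o(e(\Gnp))$ once $c$ is sufficiently small. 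The strict $2$-balancedness of $J$ controls overlapping copies: for every proper subgraph $J'\subsetneq J$ one has $d_2(J')<d_2(J)$, so a standard second-moment estimate bounds the number of ordered pairs of copies of $J$ sharing a common subgraph by $O(e(\Gnp))$. Hence, with positive probability, one can recolour one edge of each monochromatic copy of $J$ without creating new ones, producing a $k$-colouring of $\Gnp$ with no monochromatic $J$; this suffices since every monochromatic copy of $H$ would contain a monochromatic copy of~$J$.

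For the $1$-statement, my plan uses the sparse regularity method. Apply the Sparse Regularity Lemma to $\Gnp$ to obtain a partition $V_1,\dots,V_t$ where almost every pair $(V_i,V_j)$ is $\eps$-regular of density $\Theta(p)$. Given any $k$-colouring of the edges, in each regular pair the densest colour class has density $\Theta(p/k)$; after a refinement step we may assume it is $\eps'$-regular. Colour each edge of the \emph{reduced graph} on $\{V_1,\dots,V_t\}$ by the dominant colour of the corresponding pair. If $t$ exceeds the classical Ramsey number $R_k(v_H)$, the reduced graph contains a monochromatic $K_{v_H}$, yielding a monochromatic ``blow-up'' of $H$ among the regular pairs of $\Gnp$. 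Finally, one applies a \emph{Sparse Counting/Embedding Lemma} for $H$ to extract an actual monochromatic copy of~$H$ inside this blow-up structure.

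The main obstacle is the Sparse Counting/Embedding Lemma itself: the assertion that a sufficiently $\eps$-regular subgraph of $\Gnp$ of density $\Theta(p)$ contains (roughly) the expected number of copies of $H$, whenever $p\ge Cn^{-1/m_2(H)}$. This is delicate because, after conditioning on regularity, the edges are no longer independent, and clustered configurations whose appearance rate is governed precisely by $m_2(H)$ could in principle destroy most embeddings. This step is essentially the content of the K{\L}R-conjecture restricted to $H$, handled in R\"odl--Ruci\'nski's original argument by an ad hoc analysis and, in more generality, by Conlon--Gowers, Schacht, and Balogh--Morris--Samotij via transference and hypergraph container methods. For the $0$-statement a secondary technical issue is controlling the joint overlaps of many copies of $J$ simultaneously; this is where Janson's inequality, together with strict $2$-balancedness, is indispensable.
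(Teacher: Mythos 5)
Theorem~\ref{thm:Ramsey} is quoted here as a known result with citations to \cite{MR1249720} and \cite{MR1276825}; the paper does not reprove it, so there is no in-paper argument to compare your sketch against. That said, your proposal diverges from the cited proofs in ways worth flagging, and one of its two halves has a real gap.

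For the $1$-statement, the sparse-regularity-plus-embedding route is logically viable today (the K{\L}R conjecture is now a theorem), but it is not how R\"odl and Ruci\'nski argued, nor is it the template the present paper builds on. The proof in \cite{MR1276825} uses no regularity lemma whatsoever: it is a direct two-round exposure of $\Gnp$ combined with an induction on $e(H)$, with the deletion method (Lemma~4 of \cite{MR1276825}) supplying the upper-tail control. The present paper's Section~\ref{sec:deletion} and the proof of Lemma~\ref{lemma:induction} explicitly mirror this regularity-free scheme, replacing the deletion step by Harris-inequality conditioning. So you are invoking a heavier, later tool-chain, and also slightly mischaracterizing \cite{MR1276825}: the original argument does not ``handle the K{\L}R-like step by an ad hoc analysis'' inside a regularity framework --- it avoids that framework entirely.

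Your $0$-statement sketch has a genuine gap. From the observation that the expected number of monochromatic $J$-copies under a random $k$-colouring is $O(e(\Gnp))$ you conclude that ``one can recolour one edge of each monochromatic copy of $J$ without creating new ones.'' This step does not follow from any first- or second-moment estimate: each recolouring can create new monochromatic $J$-copies, and nothing you have written guarantees the process terminates in a proper colouring, even with the second-moment control on overlapping pairs. The actual $0$-statement proof in \cite{MR1249720} is a deterministic structural argument: one shows that for $p\le cn^{-1/m_2(H)}$, with probability $1-o(1)$ the copies of $J$ in $\Gnp$ intersect in so restricted a pattern that a colouring avoiding monochromatic $J$ exists outright; establishing this uses strict $2$-balancedness in an essential and non-obvious way and is, in fact, the most delicate part of the whole theorem. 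Your sketch elides it. (Incidentally, ``deletion method'' conventionally names the upper-tail technique for the $1$-statement --- precisely what the present paper proposes to replace --- rather than any $0$-statement argument.)
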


We will refer to the two statements made by Theorem~\ref{thm:Ramsey} as the $0$- and the $1$-statement, respectively, and to the function $p_H(n)=n^{-1/\mTwo(H)}$ as the \emph{threshold} for the Ramsey property $F\rightarrow (H)_k$. The $1$-statement of Theorem~\ref{thm:Ramsey} is also true when $H$ is any forest that is not a matching; for the $0$-statement however there are a few well-understood nontrivial exceptions (see e.g.~\cite{MR1782847}*{Section~8.1}). 

A vertex-coloring analogue of Theorem~\ref{thm:Ramsey} was proved earlier in~\cite{MR1182457}, and generalizations of Theorem~\ref{thm:Ramsey} to the (uniform) hypergraph setting were studied in~\cites{up:frs, MR1492867, MR2318677}. Most work on the hypergraph setting has focused on the corresponding $1$-statements, i.e., on proving upper bounds on the thresholds of the respective Ramsey properties. This line of work has been settled quite recently by the results of~\cite{up:frs},  which imply $1$-statements analogous to that of Theorem~\ref{thm:Ramsey} for even more general settings. Similar results were reported by Conlon and Gowers~\cite{up:cg}.

\subsection{Asymmetric Ramsey properties}
In Theorem~\ref{thm:Ramsey} the same graph~$H$ is forbidden in every
color class. In this paper we are concerned with the natural
generalization of this setup where a different graph is forbidden in
each of the~$k$ color classes. Within classical Ramsey theory the
study of these so-called asymmetric Ramsey properties led to many
interesting questions and results; see e.g.~\cite{MR1601954}.

For any graphs $F,H_1, \dots, H_k$, let  
\[
  F \rightarrow (H_1,\dots,H_k)
\]
denote the property that every edge-coloring of~$F$ with~$k$ colors
contains a monochromatic copy of $H_i$ in color $i$ for some $1\leq
i\leq k$. 
The threshold of this asymmetric Ramsey property was determined for the case in which
all the~$H_i$ are cycles~$C_{\ell_i}$ (here $C_\ell$~denotes the cycle of length~$\ell$) by the first author and Kreuter.

\begin{theorem}[\cite{MR1609513}]
\label{thm:Ramsey asymmetric cycles} Let~$k \geq 2$ and $3 \leq \ell_1 \leq \cdots \leq \ell_k$ be integers. Then there exist constants $c$, $C > 0$ such that
\[
  \lim_{n\to\infty}
  	\prob{\Gnp \rightarrow (C_{\ell_1}, \ldots ,C_{\ell_k})}
  =
  \begin{cases}
    0   &\text{if $p=p(n) \leq cn^{-1/\mTwo(C_{\ell_2}, C_{\ell_1})}$}\\
    1   &\text{if $p=p(n) \geq Cn^{-1/\mTwo(C_{\ell_2}, C_{\ell_1})}$}\,,
  \end{cases} 
\]
where
\[ 
  \mTwo(C_{\ell_2}, C_{\ell_1}) 
  := \frac{\ell_1}{\ell_1 - 2 + (\ell_2-2)/(\ell_2-1)}
  \,.
\]
\end{theorem}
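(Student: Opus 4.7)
My plan is to establish the 0-statement (lower bound) and 1-statement (upper bound) of Theorem~\ref{thm:Ramsey asymmetric cycles} separately. For simplicity I focus on $k=2$; the case $k > 2$ follows along the same lines by focusing on the two densest color classes.

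For the 0-statement at $p = c n^{-1/\mTwo(C_{\ell_2}, C_{\ell_1})}$ with $c > 0$ small, I would exhibit a valid 2-coloring of $\Gnp$ via a deletion argument. Randomly color each edge blue with probability $q$ and red otherwise, independently. First-moment computations yield expected counts of order $n^{\ell_1} p^{\ell_1}(1-q)^{\ell_1}$ for all-red copies of $C_{\ell_1}$ and of order $n^{\ell_2}(pq)^{\ell_2}$ for all-blue copies of $C_{\ell_2}$. Choose $q$ so that the former is $o(pn^2)$ and the latter is $o(1)$; this is feasible precisely at $p = \Theta(n^{-1/\mTwo(C_{\ell_2}, C_{\ell_1})})$, which explains the claimed exponent. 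Janson's inequality upgrades these expectations to whp statements, and deleting one edge per surviving red $C_{\ell_1}$ produces a valid 2-coloring of $\Gnp$ minus a vanishingly small edge set, certifying $\Gnp \not\rightarrow (C_{\ell_1}, C_{\ell_2})$.

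For the 1-statement at $p \ge C n^{-1/\mTwo(C_{\ell_2}, C_{\ell_1})}$, I would argue by contradiction: suppose there is a 2-coloring $\chi$ of $E(\Gnp)$ whose red subgraph $R$ contains no $C_{\ell_1}$ and whose blue subgraph $B$ contains no $C_{\ell_2}$. Since whp $\Gnp$ has of order $n^{\ell_1} p^{\ell_1}$ copies of $C_{\ell_1}$ and each must meet $B$, a careful counting forces $B$ to be reasonably dense. On the other hand, each blue edge $e$ lies in many $C_{\ell_1}$-copies (as $p$ is well above the $C_{\ell_1}$-containment threshold), and in each such copy the residual path $C_{\ell_1} - e \cong P_{\ell_1-1}$ must itself contain a blue edge. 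This cascade of forced blue edges, combined with global degree and concentration estimates, should eventually assemble into a blue copy of $C_{\ell_2}$, contradicting the coloring's validity.

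The main obstacle is executing this cascade argument cleanly enough that the density of forced blue edges exceeds the maximum count permitted in a $C_{\ell_2}$-free subgraph of $\Gnp$, with exponents matching exactly at $\mTwo(C_{\ell_2}, C_{\ell_1})$. The classical deletion method would handle this by first removing all copies of $C_{\ell_1}$ with atypical overlap structure and then locating a blue $C_{\ell_2}$ among the surviving clean configurations; the paper's alternative (per the abstract) instead tracks blue-edge concentration directly, without the preliminary deletion step. Balancing the number of $C_{\ell_1}$-extensions of a typical edge in $\Gnp$ against the structural constraints on a $C_{\ell_2}$-free subgraph is the technical heart of the argument, and this is where the exponent $\mTwo(C_{\ell_2}, C_{\ell_1})$ emerges organically.
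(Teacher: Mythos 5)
The theorem you are asked to prove is not proved in this paper at all: it is cited verbatim from Kohayakawa and Kreuter~\cite{MR1609513}, where the $1$-statement is obtained via the sparse regularity method and the $0$-statement by a structural analysis of how copies of $C_{\ell_1}$ overlap. The present paper's contribution (Theorem~\ref{thm:main result}) gives a \emph{different} proof of the $1$-statement, by induction on $e(G)$ using $H$-covered copies and the Harris inequality, and it applies here because $C_{\ell_1}$ is strictly balanced w.r.t.\ $d_2(C_{\ell_2},\cdot)$ when $\ell_1<\ell_2$; the $0$-statement is simply taken from prior work.

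Your proposal has genuine gaps in both directions. For the $0$-statement, the random coloring plus deletion argument does not prove non-arrowing. The property $\Gnp\nrightarrow (C_{\ell_1},C_{\ell_2})$ requires a valid $2$-coloring of \emph{all} edges of $\Gnp$; producing a valid coloring of $\Gnp$ minus a small edge set $F$ proves nothing, since the edges of $F$ must still be colored, and coloring them blue risks creating blue $C_{\ell_2}$'s while coloring them red undoes the deletion. Indeed, at $p=cn^{-1/m_2(C_{\ell_2},C_{\ell_1})}$ a uniformly random coloring typically produces a super-linear number of monochromatic copies (in the symmetric case already $\Theta(n^{v_H}p^{e_H})$ monochromatic $H$'s), so a first-moment argument cannot succeed; the actual proofs here and in~\cites{MR1609513, MR2531778, MR1249720} show that the hypergraph whose hyperedges are copies of $C_{\ell_1}$ restricted to the relevant edges decomposes into small, tractable pieces, and build a coloring greedily piece by piece. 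This structural input is entirely absent from your sketch.

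For the $1$-statement, the ``cascade'' step is logically false as stated: if $e$ is a \emph{blue} edge and $e$ lies in a copy $K$ of $C_{\ell_1}$, then $K$ already contains a blue edge (namely $e$), so nothing forces the residual path $K-e$ to contain another blue edge. The constraint that every $C_{\ell_1}$-copy meets the blue class only acts on copies that are otherwise all red, so the correct bootstrapping step must start from red configurations, not blue ones. This is precisely what the paper's inductive scheme does: it shows that every coloring of the $C_{\ell_1}$-edges with no blue $C_{\ell_1}$ produces many $C_{\ell_1}$-covered red copies of $C_{\ell_2}$ minus an edge, which in turn make a $(\rho,d)$-dense base graph of pairs that would complete red copies of $C_{\ell_2}$; one then re-exposes a fresh layer of randomness and applies Janson plus Tur\'an to find many edge-disjoint blue $C_{\ell_1}$'s inside the base graph, forcing a red $C_{\ell_2}$. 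Without an argument of this two-round, union-bound-over-colorings shape, the loose statement that ``forced blue edges eventually assemble into a blue $C_{\ell_2}$'' does not close.
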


Note that the threshold does not depend on $\ell_3, \dots, \ell_k$ in
order of magnitude.

In the same paper, an explicit threshold function for asymmetric Ramsey properties involving arbitrary graphs $H_i$ is conjectured. The conjecture is stated for the two-color case, and also we will restrict our attention to this case in the following. We will briefly return to the case with more colors at the end of this paper.

For any two graphs $G$ and $H$ we let
\begin{equation} \label{eq:def_d2-asym}
  \dTwo(G, H)
  := \begin{cases}
   \displaystyle \frac{e_H}{v_H - 2 + 1/\mTwo(G)} & \text{if $e_G$,
     $e_H\geq 1$}\\ 
   0 & \text{otherwise}
   \end{cases}
  \end{equation}
(where $m_2(G)$ is defined in~\eqref{eq:def_d2} and \eqref{eq:def_m2}), 
and set
\begin{equation} \label{eq:def_m2-asym}
  \mTwo(G, H)
  := \max_{J\seq H}\; d_2(G,J)
  \,.
\end{equation}
We say that $H$ is \emph{balanced \wrt $d_2(G, \cdot)$} if $m_2(G, H)=d_2(G, H)$, and \emph{strictly balanced \wrt $d_2(G, \cdot)$} if in addition $m_2(G, H)>d_2(G,J)$ for all proper subgraphs $J\subsetneq H$.

It can be verified that $m_2(G,G)=\mTwo(G)$ for any graph $G$ and, more generally, that for any two graphs $G$ and $H$ with $\mTwo(G) \leq \mTwo(H)$ we have $m_2(G)\leq m_2(G,H)\leq m_2(H)$, with both inequalities strict if $0<\mTwo(G) < \mTwo(H)$. The conjecture in~\cite{MR1609513} is as follows.  

\begin{conj}[\cite{MR1609513}]
\label{conj:asymmetric colorings} Let~$G$ and~$H$ be graphs that are not forests with~$\mTwo(G)
\leq \mTwo(H)$. Then there exist constants $c$, $C > 0$ such that
\[
  \lim_{n\to\infty}
  	\prob{\Gnp \rightarrow (G, H)}
  =
  \begin{cases}
    0   &\text{if $p=p(n) \leq c n^{-1/{\mTwo(G, H)}}$}\\
    1   &\text{if $p=p(n) \geq C n^{-1/{\mTwo(G, H)}}$},
  \end{cases} 
\]
where $\mTwo(G, H)$ is defined in \eqref{eq:def_d2-asym} and
\eqref{eq:def_m2-asym}.
\end{conj}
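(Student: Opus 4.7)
The plan is to address the $0$-statement and the $1$-statement by very different methods. Fix a subgraph $J^\star \seq H$ attaining $\mTwo(G, J^\star) = \mTwo(G, H)$. By monotonicity of the Ramsey property, it suffices to prove the $1$-statement under the additional assumption that $H$ is strictly balanced with respect to $\dTwo(G, \cdot)$; and for the $0$-statement it is enough to construct a coloring of $\Gnp$ avoiding red $G$ and blue $J^\star$, which immediately avoids blue $H$ as well.

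For the $0$-statement, the goal is to exhibit, with probability $1 - o(1)$ at $p = c \cdot n^{-1/\mTwo(G, H)}$, an edge-coloring of $\Gnp$ with no red $G$ and no blue $J^\star$. The guiding observation is that this density is exactly the point at which the expected number of copies of $J^\star$ in $\Gnp$ equals $n^{2 - 1/\mTwo(G)}$, that is, roughly the edge count of a random graph just below the symmetric Ramsey threshold for $G$. Following the two-round exposure strategy introduced by the first author and Kreuter~\cite{MR1609513} for cycles, I would expose $\Gnp$ as $\GnpOne \cup \GnpTwo$ with $p_1 \ll n^{-1/\mTwo(G)}$, pre-color $\GnpOne$ red (which is automatically $G$-free by Theorem~\ref{thm:Ramsey} with $k = 1$), and then color the edges of $\GnpTwo$ red or blue so as to destroy every copy of $J^\star$ that lies entirely in the blue set, taking care to keep the red set $G$-free. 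A second-moment estimate on pairs of $J^\star$-copies sharing a distinguished edge, leveraging strict balancedness of $J^\star$ with respect to $\dTwo(G, \cdot)$, shows that this destruction succeeds with high probability.

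For the $1$-statement, the plan follows the alternative-to-deletion approach announced in the abstract. Fix an arbitrary two-coloring of $E(\Gnp)$ with red part $R$ and blue part $B$, and assume for contradiction that $G \not\seq R$ and $H \not\seq B$. The second assumption forces $R$ to meet every one of the $\Theta(n^{v_H} p^{e_H})$ copies of $H$ in $\Gnp$, so $R$ has a weighted density of order $n^2 p$ with respect to the natural weighting by the number of $H$-copies through an edge. The exponent in $\dTwo(G, H)$ is calibrated precisely so that this weighted density crosses the single-color threshold at which Theorem~\ref{thm:Ramsey} (applied with $k = 1$) forces $G \seq R$. To turn this heuristic into a statement uniform over the doubly exponential family of colorings, I would combine a Janson-type concentration estimate for the number of $H$-copies through every prescribed edge set with a deterministic embedding lemma producing $G$ inside any sufficiently weighted-dense red subgraph of $\Gnp$.

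The hardest step is exactly this last one: extracting a uniform-over-colorings claim of the form ``every $R \seq E(\Gnp)$ hitting all copies of $H$ contains a copy of $G$''. This is where the R\"odl--Ruci\'nski deletion method is traditionally applied, and where it is known to be too weak in the asymmetric setting. The paper's new technique presumably bypasses deletion by working directly with the count of ``rich'' edges---those incident to many copies of $H$---and extracting $G$ from the graph they span through a sparse counting lemma, rather than by iteratively peeling off bad $H$-copies. A secondary source of difficulty is the boundary regime $\mTwo(G) = \mTwo(H)$, in which the symmetric Ramsey threshold for $G$ and the asymmetric threshold $n^{-1/\mTwo(G, H)}$ agree up to constants, so the destruction budget in the $0$-statement becomes tight and a quantitative form of Theorem~\ref{thm:Ramsey} must be used with care.
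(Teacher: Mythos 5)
The statement you are proving is a \emph{conjecture} in this paper, not a theorem: the authors prove only the $1$-statement, and only under the additional hypothesis that $H$ is strictly balanced with respect to $\dTwo(G,\cdot)$ (their Theorem~\ref{thm:main result}); they explicitly state in the concluding section that a general proof of the $0$-statement ``does not seem to be within reach of the known methods.'' Your proposal claims both halves in full generality, and each half contains a fatal gap. For the $1$-statement, the opening reduction is backwards. Passing from $H$ to a densest subgraph $J^\star$ works for the $0$-statement (no blue $J^\star$ implies no blue $H$), but for the $1$-statement the implication runs the wrong way: $F\rightarrow(G,J^\star)$ does \emph{not} imply $F\rightarrow(G,H)$, since a blue copy of $J^\star$ need not extend to a blue copy of $H$. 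There is no ``monotonicity'' that lets you assume $H$ is strictly balanced with respect to $\dTwo(G,\cdot)$; this is precisely why the balancedness hypothesis appears in Theorem~\ref{thm:main result} and why removing it is listed as open. Moreover, the engine you propose for the $1$-statement --- ``a deterministic embedding lemma producing $G$ inside any sufficiently weighted-dense red subgraph of $\Gnp$'' --- is essentially the K{\L}R-Conjecture, which is known only for cycles and small cliques and which the paper's method is specifically designed to avoid. The actual proof instead runs an induction on $e(G)$ using $H$-covered copies, a two-round exposure, $(\rho,d)$-density of a base graph, Tur\'an's theorem for edge-disjoint $H$-copies, and the Harris inequality in place of deletion; none of this is recoverable from your sketch.

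The $0$-statement half is in worse shape. You assert that pre-coloring $\GnpOne$ red is safe because $\GnpOne$ ``is automatically $G$-free by Theorem~\ref{thm:Ramsey} with $k=1$.'' Theorem~\ref{thm:Ramsey} requires $k\ge 2$, and its $0$-statement asserts the existence of a $G$-free-in-each-color \emph{$2$-coloring}, not that the random graph itself contains no copy of $G$; at any $p_1$ comparable to $n^{-1/\mTwo(G)}$ (which is far above the appearance threshold $n^{-1/m(G)}$), the graph $\GnpOne$ contains many copies of $G$ with high probability, so the all-red pre-coloring immediately creates red copies of $G$. Beyond that, ``a second-moment estimate on pairs of $J^\star$-copies sharing a distinguished edge'' does not produce a valid coloring: the known $0$-statement proofs (for cycles in \cite{MR1609513} and for cliques in \cite{MR2531778}) require a delicate structural analysis of how copies of $G$ built from $H$-edges overlap, and extending them to general $G$ and $H$ was, at the time of this paper, and for the authors themselves, an open problem. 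As it stands, neither half of your proposal constitutes a proof, and the full conjecture is not established by this paper either.
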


The assumption that $G$ and $H$ are not forests was not made in the
original formulation of Conjecture~\ref{conj:asymmetric colorings},
but without it the $0$-statement fails to hold even in the symmetric
case, as mentioned above.

The threshold function stated in Conjecture~\ref{conj:asymmetric
  colorings} can be motivated as follows. Let $G$ and~$H$ be graphs
with $0<m_2(G)<m_2(H)$, and assume that we are looking for a
red-blue-coloring of $\Gnp$ that contains no red copy of $G$ and no
blue copy of $H$. For simplicity, suppose that
$m_2(G)=(e_G-1)/(v_G-2)$ and~$\mTwo(G,H)=e_H/(v_H-2+1/m_2(G))$.  Note
that w.l.o.g.\ we may assign color blue to all edges that are not
contained in a copy of $H$ -- in other words, only the edges of
$\Gnp$ that are contained in copies of $H$ are relevant for the Ramsey
property $\Gnp\rightarrow(G, H)$. We shall call these edges
\emph{$H$-edges} in the following.  By standard calculations, for~$p =
cn^{-1/\mTwo(G,H)}$ the expected number of $H$-edges in $\Gnp$ is of
order $n^{v_H-e_H/m_2(G,H)}=n^{2-1/m_2(G)}$, and if these edges behave
like edges of a random graph~$G_{n,p^*}$ with $p^*=c'n^{-1/m_2(G)}$,
the expected number of copies of~$G$ that are formed by such $H$-edges
and contain a given edge of $\Gnp$ is a constant depending on $c$. If
this constant is close to zero, the copies of $G$ formed by $H$-edges
in $G_{n,p}$ should be loosely scattered, and we can color one edge
blue in each of these copies without creating blue copies of $H$ in
the process. On the other hand, if this constant is large, the copies
of $G$ formed by $H$-edges of $\Gnp$ will highly intersect with each
other, and, according to the conjecture, almost surely there will
be no coloring avoiding both a red copy of~$G$ and a blue copy
of~$H$. 

The reader may wonder why a similar reasoning with the roles of $G$
and $H$ reversed is not equally justified. The reason is that whenever
$p$ is larger than $n^{-1/m_2(G)}$ by an appropriate  polylogarithmic factor
(in particular for $p =
cn^{-1/\mTwo(G,H)}$ as above), with high probability \emph{every} edge
of $\Gnp$ is contained in a copy of $G$. (Recall that $G$ is the
sparser of the two graphs.) Thus the notion of `$G$-edges' is
meaningless in our context.

A vertex-coloring analogue of Conjecture~\ref{conj:asymmetric
  colorings} was proved by Kreuter~\cite{MR1606853}. The only
significant progress towards proving Conjecture~\ref{conj:asymmetric
  colorings} since its publication in~\cite{MR1609513} concerns the
$0$-statement, which was shown to hold for the case in which~$G$
and~$H$ are complete graphs of arbitrary fixed sizes
in~\cite{MR2531778}.

The approach employed in~\cite{MR1609513} for the proof of the
$1$-statement of Theorem~\ref{thm:Ramsey asymmetric cycles} is based
on the sparse version of Szemer\'edi's regularity lemma
(see~\cites{MR1661982,MR1980964}).
The approach via sparse regularity can be extended to prove the
$1$-statement of Conjecture~\ref{conj:asymmetric colorings} for any
two graphs $G$ and $H$, provided the so-called
K{\L}R-Conjecture~\cite{MR1479298} holds for $G$ and $H$ is strictly
balanced \wrt $d_2(G,\cdot)$ (see~\cite{MR2531778};  additionally, Lemma 16 in 
\cite{MR1609513} needs to be modified slightly to relax the
condition on $H$ from `$2$-balanced' to `strictly balanced \wrt
$d_2(G,\cdot)$'). 
The K{\L}R-Conjecture has been proven for cycles of
arbitrary size, and for complete graphs on up to five vertices. For
references and a comprehensive overview of the status quo of that
conjecture, we refer to the survey article~\cite{MR2187740}.

\subsection{Our results}

In this paper we prove the $1$-statement of Conjecture~\ref{conj:asymmetric colorings} under the same balancedness assumption for $H$ as is needed for the approach via sparse regularity, but without invoking the K{\L}R-Conjecture for $G$. We say that a graph is a \emph{matching} if it has maximum degree at most $1$.

\begin{theorem}[Main result]
\label{thm:main result}
Let~$G$ and~$H$ be graphs that are not matchings such that~$H$ is
strictly balanced \wrt $d_2(G,\cdot)$.  Then there exists a constant
$C>0$ such that for $p=p(n) \geq C n^{-1/{\mTwo(G, H)}}$ we have
$$
\lim_{n\to\infty}\Pr(\Gnp \rightarrow (G, H))=1\;.
$$
\end{theorem}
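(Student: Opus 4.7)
\smallskip
\noindent\textbf{Proof proposal.}

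The plan is a two-round exposure of $\Gnp$ that avoids the deletion method of R\"odl--Ruci\'nski. Write $\Gnp \supseteq G^1 \cup G^2$ with $G^i = G_{n, p_i}$ independent and $p_1, p_2 = \Theta(n^{-1/\mTwo(G, H)})$, chosen so that $1 - (1-p_1)(1-p_2) \leq p$. Assume, for contradiction, that with probability bounded away from zero $\Gnp$ admits an edge-colouring whose red set $R$ is $G$-free and whose blue set $B$ is $H$-free. Restricted to $G^1$ this yields a $G$-free $R^1 := R \cap E(G^1)$ and an $H$-free $B^1 := E(G^1) \setminus R^1$; any completion of an almost-copy of $H$ in $B^1$ by an edge of $G^2$ is forced to be red, since otherwise a blue $H$ would appear.

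\emph{Step 1 (Extension property of $G^1$).} I would prove that w.h.p.\ $G^1$ satisfies the following deterministic property: for every $G$-free subgraph $R^1 \seq E(G^1)$, the number of copies of $H$ in $K_n$ with exactly one edge outside $G^1$ and all remaining edges in $E(G^1) \setminus R^1$ is at least a polynomial $M(n)$ of order $n^{v_H - (e_H-1)/\mTwo(G, H)}$, up to lower-order factors. This is the substitute for the ``deleted'' copies in the classical argument. For a fixed $R^1$ the count is controlled by Janson's inequality; uniformity over the exponentially many $G$-free subgraphs $R^1 \seq E(G^1)$ requires a compact encoding of such subgraphs together with Chernoff-type concentration. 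The strict balancedness of $H$ \wrt $d_2(G, \cdot)$ is essential here, since it guarantees that copies of $H$ (rather than any proper subgraph $J \subsetneq H$) dominate these extension counts at density $p_1$, and that the Janson error term $\Delta$ is of the correct order.

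\emph{Step 2 (Completion via $G^2$ and main obstacle).} Each of the $M(n)$ near-copies of $H$ from Step 1 has a unique completion edge in $E(K_n) \setminus E(G^1)$; when this edge lies in $G^2$ it is forced to be red. Since $G^2$ samples each potential completion edge independently with probability $p_2$, taking $C$ sufficiently large yields a set of order $M(n) \cdot p_2 = \Theta(n^{2 - 1/\mTwo(G)})$ forced red edges --- of the same order as the edge count of $G_{n,q}$ at the symmetric Ramsey threshold $q = n^{-1/\mTwo(G)}$ for $G$. A Janson-type / second-moment argument, exploiting the near-random distribution of these forced edges, should then show that they contain a copy of $G$ w.h.p., contradicting the $G$-freeness of $R$. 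The principal obstacle is Step 1: achieving uniformity of the extension bound over the exponentially many $G$-free choices of $R^1$, which is precisely what the deletion method was designed to circumvent. The envisaged replacement combines Janson-type concentration for extension counts with a compact encoding of $G$-free subgraphs, where the strict balancedness of $H$ plays a decisive role in keeping the union-bound losses under control.
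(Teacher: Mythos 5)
Your high-level intuition is sound — two-round exposure, near-copies of $H$ in the first round creating forced red completion edges in the second — and it is close to how the paper starts. But the proposal has a genuine gap precisely where you flag it, and the fix you envisage (``compact encoding of $G$-free subgraphs $+$ Chernoff'') is not what makes the argument go through.

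There are two interlocking problems with Step~2. First, the ``forced red edges'' are not a random set in any sense that admits a direct Janson/second-moment argument for the appearance of $G$: which completion edges are forced depends on the adversarial choice of $R^1$, and distinct near-copies of $H$ can force the same edge, so independence is lost. More importantly, showing these forced edges contain a copy of $G$ is itself a problem of exactly the same shape as the original one — you need to extract a red $G$ from a set of edges controlled by the adversary — so you have reduced the problem to (roughly) itself. The paper resolves this circularity by \emph{induction on $e(G)$}: it proves a stronger statement (the Main Lemma), namely that every valid coloring of the $H$-edges contains many $H$-covered red copies of $G$, and the first round applies the induction hypothesis to a subgraph $G_-$ of $G$ with one fewer edge. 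The notion of ``$H$-covered'' copies (each edge of the copy sits in its own edge-disjoint copy of $H$) is exactly what allows the count to be propagated inductively and to interact correctly with the second-round near-copies of $H$. Your proposal has no analogue of this.

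Second, on the uniformity issue: the paper does \emph{not} take a union bound over all $G$-free subgraphs of $E(G^1)$. Instead it observes that only the coloring of the $H$-edges $E_H(G^1)$ matters, controls $|E_H(G^1)|$ (which is much smaller than $|E(G^1)|$ at this density), and thereby makes the union bound over colorings affordable. The number of \emph{all} $G$-free subgraphs of $G^1$ is far too large for the available error probabilities, and ``compact encoding'' in the abstract will not fix this: the right encoding is the restriction to $H$-edges, which falls out naturally once one formulates the Main Lemma in terms of colorings of $E_H(\cdot)$. Finally, the technical substitute for the deletion method that the paper actually uses — and that you gesture at but do not supply — is conditioning, via the Harris/FKG inequality, on a \emph{decreasing} event (the number of copies of a certain auxiliary graph being not too large). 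Since the bad event (failure of the Ramsey property) is also decreasing, this conditioning can only increase its probability, so one may work in the conditioned space where the count is bounded with probability~$1$, with no edges deleted. Without this mechanism, the bound in your Step~1 would need a Janson \emph{upper}-tail control, which is exactly what the classical deletion method was invented to replace, and which Janson's inequality alone does not give.
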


Recall that we suppose that $m_2(G)\leq m_2(H)$ in
Conjecture~\ref{conj:asymmetric colorings}. One can show that the
assumption that $H$ should be strictly balanced \wrt $d_2(G,\cdot)$ in
Theorem~\ref{thm:main result} implies that $m_2(G) < m_2(H)$.  

There is an equivalent formulation of the hypothesis of strict
balancedness in Theorem~\ref{thm:main result}.  For every subgraph~$J$
of~$H$, let~$\mu(J;n,p)$ be the expected number of occurences of~$J$
in~$G_{n,p}$.  Then~$H$ is strictly balanced \wrt $d_2(G,\cdot)$ if
and only if
\[\mu(H;n,n^{-1/{\mTwo(G, H)}}) = o( \mu(J;n,n^{-1/{\mTwo(G,H)}}))\]
for every proper subgraph~$J$ of~$H$ (see Remark~\ref{rem:least-frequent} and
Lemma~\ref{lemma:equivalence}(\textit{ii}) below). 

Our proof of Theorem~\ref{thm:main result} does not use sparse
regularity at all, and has in fact more in common with the original
proof of the $1$-statement for the symmetric case
(Theorem~\ref{thm:Ramsey}), due to R\"odl and Ruci\'nski, than with
the proof of Theorem~\ref{thm:Ramsey asymmetric cycles} given
in~\cite{MR1609513}.  We believe that a feature of interest in our
proof is that it introduces a different approach for handling certain
technical difficulties that are dealt with in the R\"odl--Ruci\'nski
proof via the so called `deletion method' (for details, see
Section~\ref{sec:deletion}).

Together with the lower bound results for complete graphs we already mentioned~\cite{MR2531778}, our result establishes general threshold functions for the case where $G=K_\ell$ and $H=K_r$ are complete graphs of fixed sizes $\ell<r$.

\begin{corollary}
\label{cor:cliques}
Let $3\leq \ell < r$ be integers. Then there exist constants $c$, $C>0$ such that
\[
  \lim_{n\to\infty}
  	\prob{\Gnp \rightarrow (K_\ell, K_r)}
  =
  \begin{cases}
    0   &\text{if $p=p(n) \leq  c n^{-1/{\mTwo(K_\ell, K_r)}}$}\\
    1   &\text{if $p=p(n) \geq  C n^{-1/{\mTwo(K_\ell, K_r)}}$},
  \end{cases} 
\]
where
\begin{equation*} 
  \mTwo(K_\ell, K_r)
  = \frac{\binom{r}{2}}{r - 2 + 2/(\ell+1)}
  \,.
\end{equation*}
\end{corollary}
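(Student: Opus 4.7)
The plan is to derive Corollary~\ref{cor:cliques} by combining Theorem~\ref{thm:main result} (for the 1-statement) with the 0-statement for complete graphs proved in~\cite{MR2531778}. All that is needed is to verify the hypotheses of Theorem~\ref{thm:main result} for $G = K_\ell$ and $H = K_r$, and to check that the resulting threshold matches the explicit formula.

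Since $\ell, r \geq 3$, neither $K_\ell$ nor $K_r$ is a matching. A standard direct computation gives $m_2(K_\ell) = d_2(K_\ell) = (\binom{\ell}{2} - 1)/(\ell-2) = (\ell+1)/2$; substituting $1/m_2(K_\ell) = 2/(\ell+1)$ into~\eqref{eq:def_d2-asym} then yields $d_2(K_\ell, K_r) = \binom{r}{2}/(r - 2 + 2/(\ell+1))$, which is precisely the formula in the statement. It remains to show that $K_r$ is strictly balanced \wrt $d_2(K_\ell, \cdot)$, i.e.\ that $d_2(K_\ell, J) < d_2(K_\ell, K_r)$ for every proper subgraph $J \subsetneq K_r$ with $e_J \geq 1$.

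Setting $c := 2/(\ell+1)$ and $f(v) := \binom{v}{2}/(v - 2 + c)$, any nonempty $J \subseteq K_r$ on $v_J$ vertices satisfies $d_2(K_\ell, J) = e_J/(v_J - 2 + c) \leq f(v_J)$, with equality iff $J$ is complete. It thus suffices to prove $f(v) < f(r)$ for every $2 \leq v < r$. A short calculation yields $f(v+1) - f(v) = v(v - 3 + 2c)/[2(v - 1 + c)(v - 2 + c)]$, hence $f$ is strictly increasing on $\{3, 4, \ldots, r\}$ (since $c > 0$). The remaining inequality $f(r) > f(2)$ reduces, after clearing denominators and using $(\ell+1)c = 2$, to $r > \ell$, which holds by hypothesis.

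The main (essentially only) obstacle is the verification of strict balancedness, and within that a small wrinkle: $f$ is in general \emph{not} monotone at $v = 2$---one has $f(2) = f(3)$ when $\ell = 3$ and $f(2) > f(3)$ when $\ell \geq 4$---so the inequality $f(r) > f(2)$ must be checked separately rather than absorbed into a single monotonicity sweep over $\{2, \ldots, r\}$. With strict balancedness established, Theorem~\ref{thm:main result} supplies the 1-statement and~\cite{MR2531778} supplies the 0-statement, completing the proof of the corollary.
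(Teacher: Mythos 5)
Your proposal is correct and follows the same route the paper takes: the paper deduces the corollary by combining its Theorem~\ref{thm:main result} (1-statement) with the 0-statement of~\cite{MR2531778}, leaving the verification that $K_r$ is strictly balanced \wrt $d_2(K_\ell,\cdot)$ implicit. You have supplied exactly that omitted verification — the computation $m_2(K_\ell)=(\ell+1)/2$, the reduction to $f(v)=\binom{v}{2}/(v-2+2/(\ell+1))<f(r)$ for $2\le v<r$, the monotonicity of $f$ on $\{3,\dots,r\}$, and the separate check $f(2)<f(r)\Leftrightarrow r>\ell$ — and all steps are correct.
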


We can use Theorem~\ref{thm:main result} to infer statements about the
existence of locally sparse graphs~$F$ that enjoy the asymmetric
Ramsey property $F\rightarrow(G,H)$, similarly to those presented
in~\cite{MR1276825} for symmetric Ramsey properties. We refrain from a
general statement of these results, and only mention the following
corollary, which is an asymmetric variant of Corollary~5
in~\cite{MR1276825} and can be deduced analogously.

\begin{corollary}
  \label{cor:local_sparse}
  For all $3\leq \ell\leq r$, there exists a constant $C$ such for
  $m\geq Cn^{2-1/m_2(K_\ell, K_r)}$, almost all graphs $F$ on $n$
  vertices with $m$ edges that contain no copy of $K_{r+1}$ satisfy
  $F\rightarrow(K_\ell,K_r)$.
\end{corollary}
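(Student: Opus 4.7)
The plan is to follow the approach of Corollary~5 in~\cite{MR1276825}, transferring Theorem~\ref{thm:main result} from $\Gnp$ to the uniform distribution on $K_{r+1}$-free graphs with~$m$ edges. Write $N:=\binom{n}{2}$, let $\cF(n,m)$ denote the set of $K_{r+1}$-free graphs on $[n]$ with exactly $m$ edges, and let $\cF^-(n,m)\seq\cF(n,m)$ be those which do not arrow $(K_\ell,K_r)$; it suffices to show $|\cF^-(n,m)|/|\cF(n,m)|=o(1)$.

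Setting $p:=2m/n^2$, the hypothesis $m\ge Cn^{2-1/m_2(K_\ell,K_r)}$ places $p$ arbitrarily far above the Ramsey threshold $n^{-1/m_2(K_\ell,K_r)}$ once $C$ is large enough, so Theorem~\ref{thm:main result} applies to $\Gnp$. Since $\Gnp$ conditioned on $e(\Gnp)=m$ is uniform on the $m$-edge graphs on~$[n]$, and $\Pr(e(\Gnp)=m)=\Omega(m^{-1/2})$ by the local central limit theorem for the binomial, a quantitative version of Theorem~\ref{thm:main result}---with failure probability $\exp(-cm)$ for a constant $c=c(C)\to\infty$ as $C\to\infty$, which should be implicit in its proof as in the symmetric setting of~\cite{MR1276825}---yields
\[ |\cF^-(n,m)|\;\le\;\bigl|\{F\seq K_n:\,e(F)=m,\,F\not\to(K_\ell,K_r)\}\bigr|\;\le\;\exp(-c'm)\binom{N}{m}, \]
with $c'(C)\to\infty$ in the same limit. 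For the denominator, every $m$-element subset of the edges of the Tur\'an graph $T_r(n)$ is a $K_{r+1}$-free graph on $[n]$ with $m$ edges, so
\[ |\cF(n,m)|\;\ge\;\binom{\operatorname{ex}(n,K_{r+1})}{m}\;\ge\;\bigl(1-1/r+o(1)\bigr)^m\binom{N}{m}. \]
Taking $C$ so large that $c'>\log(r/(r-1))$, the ratio $|\cF^-(n,m)|/|\cF(n,m)|$ is bounded by $\exp\bigl((\log(r/(r-1))-c'+o(1))m\bigr)=o(1)$, as required.

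The main obstacle is the quantitative strengthening of Theorem~\ref{thm:main result} used above: the statement as given provides only $\Pr(\Gnp\not\to(K_\ell,K_r))\to 0$, which is too weak to overcome the factor $(1-1/r)^m$ lost in restricting to $K_{r+1}$-free graphs. This loss is genuine, because at $p$ of order $n^{-1/m_2(K_\ell,K_r)}$ one checks that $1/m_2(K_\ell,K_r)<2/r$ for all $\ell\ge 3$, so $p$ lies above the $K_{r+1}$-containment threshold $n^{-2/r}$ and $\Gnp$ is almost surely not $K_{r+1}$-free. The deduction therefore hinges on extracting an exponentially small failure probability from the proof of Theorem~\ref{thm:main result}, in exact parallel with the analogous extraction carried out in~\cite{MR1276825} for its Corollary~5.
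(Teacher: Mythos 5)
There is a genuine gap: the quantitative strengthening you invoke — a failure probability of $\exp(-cm)$ with $c=c(C)\to\infty$ — is \emph{not} what the proof of Theorem~\ref{thm:main result} delivers when $\ell<r$. Lemma~\ref{lemma:induction} gives failure probability $2^{-bn^{v_H}p^{e_H}}$ with $b$ a \emph{fixed} constant, and at $p=Cn^{-1/m_2(K_\ell,K_r)}$ one has, by strict balancedness,
\[
 n^{r}p^{\binom r2}=C^{\binom r2}\,n^{\,2-1/m_2(K_\ell)} ,
 \qquad
 m=\Theta\bigl(n^2p\bigr)=\Theta\bigl(C\,n^{\,2-1/m_2(K_\ell,K_r)}\bigr).
\]
Since $m_2(K_\ell)<m_2(K_\ell,K_r)$ for $\ell<r$, the exponent $n^{2-1/m_2(K_\ell)}$ is \emph{polynomially smaller} than $m$, so no choice of $C$ makes the failure probability $\exp(-cm)$. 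In the symmetric case $\ell=r$ one has $n^{v_H}p^{e_H}\asymp n^2p\asymp m$ and your argument is exactly the Rödl--Ruci\'nski one, but for $\ell<r$ this identity breaks down, and with it the comparison against the factor $(1-1/r)^m$ lost by restricting to subgraphs of the Tur\'an graph.

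The way to rescue the deduction is to replace the crude Tur\'an lower bound $|\cF(n,m)|\geq(1-1/r+o(1))^m\binom Nm$ by a Janson/Harris-type lower bound for the probability of being $K_{r+1}$-free: one has $\Pr(G_{n,m}\text{ is }K_{r+1}\text{-free})\geq\exp\bigl(-O(n^{r+1}p^{\binom{r+1}{2}})\bigr)$ with $p=m/\binom n2$. Writing $n^{r+1}p^{\binom{r+1}{2}}=n^rp^{\binom r2}\cdot np^r$, one checks that $np^r=o(1)$ for $p$ of order $n^{-1/m_2(K_\ell,K_r)}$ (this is elementary, using $m_2(K_\ell,K_r)<r$ for $\ell\geq3$), so the $\exp(-\Theta(n^rp^{\binom r2}))$ failure probability from Lemma~\ref{lemma:induction} does beat this sharper lower bound in the relevant range of $p$; for $p\gg n^{-1/m_2(K_r)}$ (where the Tur\'an bound does suffice, but Lemma~\ref{lemma:induction} no longer applies because $p>n^{-1/m^*(K_r)}$) one falls back on the symmetric R\"odl--Ruci\'nski theorem, since $F\to(K_r,K_r)$ implies $F\to(K_\ell,K_r)$. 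Your overall framework (transfer from $\Gnp$ to $G_{n,m}$, compare numerator and denominator) matches what the paper intends by ``analogous to Corollary~5 of \cite{MR1276825}'', but the blanket claim of an $\exp(-cm)$ failure probability and the reliance on the $(1-1/r)^m$ estimate are where the asymmetric case genuinely differs and the proposal as written does not close the gap.
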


We close with a deterministic consequence of
Corollary~\ref{cor:cliques}.  A graph $F$ is called
\emph{Ramsey-critical}, or simply \emph{critical}, for a pair of
graphs $(G,H)$, if $F\rightarrow (G,H)$ but, for any proper subgraph
$F'$ of $F$, the relation $F'\rightarrow (G,H)$ fails. The
pair~$(G,H)$ is called \emph{Ramsey-finite} if the class $\cC(G,H)$ of
all graphs that are critical for $(G,H)$ is finite, and
\emph{Ramsey-infinite} otherwise. Note that, by definition, the Ramsey
property $F\rightarrow (G,H)$ is equivalent to $F$ containing a copy of a
graph from~$\cC(G,H)$.

The following result was originally proved by constructive means by Burr,  Erd\H{o}s, and Lovasz~\cite{MR0419285}. We obtain an alternative (non-constructive) proof as an immediate consequence of  Corollary~\ref{cor:cliques}.

\begin{corollary}
  \label{cor:R-inf}
For all $3\leq \ell< r$, the pair $(K_\ell, K_r)$ is Ramsey-infinite.
\end{corollary}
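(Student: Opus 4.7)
My plan is to argue by contradiction, using both the $0$- and $1$-statements of Corollary~\ref{cor:cliques}. Suppose that $(K_\ell,K_r)$ is Ramsey-finite, so that the family $\mathcal{C}:=\mathcal{C}(K_\ell,K_r)$ of critical graphs is finite. As noted in the paragraph preceding the corollary, the Ramsey property $G\rightarrow(K_\ell,K_r)$ is then equivalent to $G$ containing some $F\in\mathcal{C}$ as a subgraph. The main idea is to compare the Ramsey threshold $n^{-1/m_2(K_\ell,K_r)}$ from Corollary~\ref{cor:cliques} with the classical subgraph-containment thresholds for the members of the (hypothetical) finite family $\mathcal{C}$.

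First, apply the $0$-statement of Corollary~\ref{cor:cliques} at $p=cn^{-1/m_2(K_\ell,K_r)}$: it gives $\Pr[\Gnp\rightarrow(K_\ell,K_r)]\to 0$. Under the Ramsey-finite hypothesis this reads $\Pr[\exists F\in\mathcal{C}\colon F\subseteq\Gnp]\to 0$, so in particular $\Pr[F\subseteq\Gnp]\to 0$ for each individual $F\in\mathcal{C}$. By the classical Erd\H{o}s--R\'enyi/Bollob\'as threshold for subgraph containment, $\Pr[F\subseteq\Gnp]$ tends to $0$ only when $p=o(n^{-1/m(F)})$, where $m(F):=\max_{\emptyset\neq J\subseteq F}e_J/v_J$; plugging in $p=cn^{-1/m_2(K_\ell,K_r)}$ forces the strict inequality $m(F)>m_2(K_\ell,K_r)$ for \emph{every} $F\in\mathcal{C}$.

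Next, apply the $1$-statement of Corollary~\ref{cor:cliques} at $p=Cn^{-1/m_2(K_\ell,K_r)}$: it gives $\Pr[\Gnp\rightarrow(K_\ell,K_r)]\to 1$. However, for each $F\in\mathcal{C}$ we now know $m(F)>m_2(K_\ell,K_r)$, and since $C$ is a fixed constant, $p$ still satisfies $p=o(n^{-1/m(F)})$, whence $\Pr[F\subseteq\Gnp]\to 0$ as well. A union bound over the finite family $\mathcal{C}$ then yields $\Pr[\Gnp\rightarrow(K_\ell,K_r)]\to 0$, directly contradicting the $1$-statement. There is no difficult step here: the whole point is that the Ramsey threshold in Corollary~\ref{cor:cliques} scales with the $2$-density $m_2(K_\ell,K_r)$, whereas the subgraph-containment threshold of any finite family of graphs scales with the ordinary density $m(\cdot)$, and these two scales simply cannot coincide.
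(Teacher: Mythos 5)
Your proposal is correct and follows essentially the same route as the paper: both observe that if $(K_\ell,K_r)$ were Ramsey-finite, then $F\rightarrow(K_\ell,K_r)$ would be equivalent to a subgraph-containment property for a finite family, which by classical Erd\H{o}s--R\'enyi/Bollob\'as theory has a coarse threshold (limiting probability bounded strictly in $(0,1)$ at any constant multiple of the threshold function), and this is incompatible with the $0$- and $1$-statements of Corollary~\ref{cor:cliques}. Where the paper cites the coarse-threshold fact abstractly and leaves the contradiction implicit, you unpack it by explicitly comparing $m(F)$ with $m_2(K_\ell,K_r)$ for each hypothetical critical graph $F$ -- a slightly more hands-on rendering of the same argument.
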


\begin{proof}
  It is well-known (and can be shown similarly 
  to~\cite{MR1782847}*{Theorem~3.9}) that for any finite family~$\cF$, the property
  $\cP=\cP(\cF)$ of containing a copy of a graph from~$\cF$ admits a
  threshold~$p_\cF=p_\cF(n)$ such that,
  for any constant $c>0$, the random graph $\Gnp$ with $p=cp_\cF$
  satisfies 
  $$
  0<\liminf_{n\to\infty} \prob{\Gnp \in \cP}\leq
  \limsup_{n\to\infty} \prob{\Gnp \in
    \cP}<1\,.
  $$ 
  Corollary~\ref{cor:cliques} implies that the property $F
  \rightarrow (K_\ell, K_r)$ does not admit a function~$p_\cF=p_\cF(n)$
  and hence Corollary~\ref{cor:R-inf} follows.
\end{proof}

\subsection{An alternative to the deletion method}
\label{sec:deletion}

Our proof of Theorem~\ref{thm:main result} reuses many ideas of the
proof of the $1$-statement of Theorem~\ref{thm:Ramsey} given
in~\cite{MR1276825}. However, we point out one
particular technical issue that is solved in a quite different way
from~\cite{MR1276825} in our proof. Namely, at some point in the proof
one needs to control the upper tail of the random variable that
counts the number of copies of some given graph $T$ in $\Gnp$.

In typical proofs of similar results (see, e.g., \cites{up:frs, MR1492867,
  MR2318677, up:sch}), this is taken care of by the so-called
\textit{deletion method} (see also \cite{MR2096818}), i.e., by allowing the deletion of a small
fraction of edges to get the desired exponentially small error
probability. This is formalized in the `deletion lemma' 
\cite{MR1276825}*{Lemma~4} (see also \cite{MR1782847}*{Lemma~2.51}).

This deletion lemma is then combined with a `robustness lemma'
\cite{MR1276825}*{Lemma~3} (see also~\cite{MR1782847}*{Lemma 2.52}),
which states that monotone properties (like the Ramsey properties
discussed here) that hold with probability exponentially close to $1$
continue to hold with similarly high probabilities even if an
adversary is allowed to delete a small fraction of the edges. This robustness
lemma is needed to guarantee that the few edges that were deleted to
control the number of copies of $T$ do not destroy other properties
that are important for the proof.

In our proof we use a different and arguably simpler approach to
control the number of copies of~$T$. Namely, we \emph{condition} on
the number of copies of~$T$ in~$\Gnp$ not being too large, and apply
the Harris inequality \cite{MR0115221} (Theorem~\ref{thm:fkg})
to show that this only \emph{increases} the probability that other
relevant properties fail to hold  (and, hence, bounding the
probability of such bad events in the conditional space from
above gives upper bounds for the probability of those bad
events in the original space). Thus we may work in the conditional
space. The fact that the event on which we condition holds with
reasonable probability (constant probability is more than enough here)
implies that the conditional space we are considering behaves
essentially like the original space, \emph{except that with
  probability~$1$ the number of copies of~$T$ is not too large.} Thus
there is no need to delete edges in our approach.  We believe that
many of the earlier proofs in the field, in particular the proof given
in \cite{MR1276825} for the symmetric case (Theorem~\ref{thm:Ramsey}),
can be simplified analogously from the technical point of view.

\subsection{Organization of this paper}
We collect a number of definitions and auxiliary statements in
Section~\ref{sec:preliminaries}, and prove Theorem~\ref{thm:main
  result} in Section~\ref{sec:proof}.  We discuss possible extensions
of our results in Section~\ref{sec:concluding}.

\section{Preliminaries}
\label{sec:preliminaries}

\subsection{Basic inequalities}
We begin by stating some equalities that follow immediately from the
definitions of $m_2(G)$ and $m_2(G,H)$, and that will be used
throughout this paper.  Recall that we call a graph \emph{nonempty} if
it has at least one edge. The definitions in~\eqref{eq:def_d2} and
\eqref{eq:def_m2} imply that for any nonempty graph $G$ and any
subgraph $I\seq G$ with $v_I\geq 2$ we have
\begin{equation} \label{eq:exponent-0}
 v_I - \frac{e_I}{m_2(G)} \geq 2-\frac{1}{m_2(G)}
\end{equation}
(with equality for $I=G$ if~$G$ is 2-balanced).
Similarly, the definitions in~\eqref{eq:def_d2-asym} and \eqref{eq:def_m2-asym} imply that for any two nonempty graphs $G$ and $H$ and any subgraph $J\seq H$ with $v_J\geq 2$ we have
\begin{equation} \label{eq:exponent-1}
 v_J-\frac{e_J}{m_2(G,H)} \geq 2-\frac{1}{m_2(G)}
\end{equation}
(with equality for $J=H$ if~$H$ is balanced w.r.t.\ $d_2(G, \cdot)$). Combining the previous two equalities yields in particular that for any two nonempty graphs $G$ and $H$ we have
\begin{equation}
  \label{eq:exponent-2}
  v_G-2+(e_G-1)(v_H-2) -  \frac{(e_G-1)e_H}{m_2(G,H)}
  \geBy{eq:exponent-1} v_G-2- \frac{e_G-1}{m_2(G)}\geBy{eq:exponent-0}
  0 \,,
\end{equation}
which will become important later on.

\subsection{\texorpdfstring{$H$-covered copies}{H-covered copies}}

The following definitions will be crucial in our inductive scheme.

\begin{definition} \label{def:H-covered}
For graphs $H$ and $A$, we denote by $E_H(A)\seq E(A)$ the union of the edge
sets of all copies of $H$ in $A$. We will refer to the edges in $E_H(A)$ as the \emph{$H$-edges} of $A$. Furthermore, we say that a copy $\Gbar$ of a graph $G$ in $E_H(A)$ is \emph{$H$-covered} in $A$ if there is a family of $e_G$ pairwise edge-disjoint copies of $H$ in $A$ such that each edge of $\Gbar$ is contained in (exactly) one of these copies.
\end{definition}

Note that \emph{not} every copy of $G$ that is formed by $H$-edges of $A$ is $H$-covered in $A$.

\begin{definition} \label{def:FGH}
For any two graphs $G$ and $H$, let  $\cF(G,H)$ denote the family of all graphs obtained by taking a copy of $G$ and embedding each of its edges into a copy of $H$ such that these $e_G$ copies of $H$ are pairwise edge-disjoint (not nessarily vertex-disjoint). 
\end{definition}

We denote the graphs in $\cF(G,H)$ by $G^H$, and refer to a copy of $G$ in $G^H$ that can be used to construct $G^H$ as described as a \textit{central copy} of $G$ in $G^H$ (in general, for a given $G^H \in \cF(G,H)$ such a central copy is not uniquely defined). Note that a copy of $G$ in some graph $A$ is $H$-covered if and only if it is a central copy in a copy of some graph $G^H \in \cF(G,H)$ in $A$. 

For any~$G$ and $H$ and any graph $G^H\in\cF(G,H)$, let
$$
L(G^H):= v_G + e_G\cdot(v_H-2) - v(G^H)\geq 0\,.
$$
Intuitively, this quantity denotes the number of vertices that are `lost' because the
copies of $H$ forming~$G^H$ intersect in more vertices than specified by $G$. Thus we have
\begin{equation} \label{eq:evGH}
\begin{split}
e(G^H)&=e_G\cdot e_H \,,\\
v(G^H)&= v_G+e_G\cdot(v_H-2)-L(G^H)\,.
\end{split}
\end{equation}

Our induction is on the number of edges of~$G$, and we will mostly
need the above definitions for a certain graph $G_-$ with $e(G)-1$
edges to which we apply the induction hypothesis. The following
technical lemma will become important later on.

\begin{lemma} \label{lemma:goalJ}
Let $G$ be a graph that is not a matching, let $H$ be a nonempty graph, and fix some subgraph $G_-\subseteq G$ with $e(G_-)=e(G)-1$ and $v(G_-)=v(G)$. Furthermore, let a graph $G_-^H\in\cF(G_-,H)$ with central copy $G'_-$ be given, and let $g$ denote a vertex pair that completes $G'_-$ to a copy of $G$ when inserted as an edge. Then every subgraph $J\seq G^H_{-}$ that contains the two vertices of $g$ satisfies
\begin{equation*}
  v(J)-\frac {e(J)}{m_2(G,H)} \geq 2-L(G^H_{-})\,.
\end{equation*}
\end{lemma}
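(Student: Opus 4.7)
The plan is to decompose $J$ according to its intersections with the $H$-copies making up $G_-^H$, apply the density bound~\eqref{eq:exponent-1} to each piece, and exploit the fact that the missing edge $g$ together with a suitable subset of $E(G_-)$ forms a subgraph of $G$. Concretely, for every edge $e \in E(G_-)$ set $J_e := J \cap H_e$, let $T_1 := \{e \in E(G_-) : v(J_e) \ge 2\}$, and put $S^* := \sum_{e \in T_1}(v(J_e) - 2)$. Since the $H_e$ are pairwise edge-disjoint, $e(J) = \sum_{e \in T_1} e(J_e)$, and applying~\eqref{eq:exponent-1} to each $J_e \seq H_e \cong H$ with $v(J_e) \ge 2$ gives $e(J)/m_2(G,H) \le S^* + |T_1|/m_2(G)$. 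Thus the lemma reduces to
\[
v(J) + L(G_-^H) - 2 \;\ge\; S^* + |T_1|/m_2(G)\,. \qquad(\star)
\]

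To analyze~($\star$), set $V_c := V(J) \cap V(G_-)$, and partition $T_1$ into $T_\alpha, T_\beta, T_\gamma$ according to whether the edge $e$ (viewed as an edge of $G_-$) has $2$, $1$, or $0$ of its endpoints in $V_c$. A double-count of the pairs $(v,e)$ with $v \in V(J_e)$ and $e \in T_1$ rewrites the right-hand side of~($\star$) in terms of $|T_\alpha|, |T_\beta|, |T_\gamma|$ together with certain ``excess incidence'' quantities that count, at each vertex of $V(J)$, the number of copies $H_e$ with $e\in T_1$ beyond those prescribed by $G_-$. Because $L(G_-^H) = v_G + e_{G_-}(v_H-2) - v(G_-^H)$ is precisely the total of such excess incidences over all of $V(G_-^H)$, split into contributions from $G_-$-vertices and from interior vertices of $G_-^H$, the term $L(G_-^H)$ on the left of~($\star$) absorbs these excesses completely. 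Since, in addition, interior vertices of $V(J)$ lying in no $H_e$ with $e\in T_1$ contribute only non-negative slack, ($\star$) reduces to
\[
|V_c| - 2 + |T_\beta| + 2|T_\gamma| \;\ge\; |T_1|/m_2(G)\,. \qquad(\star\star)
\]

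The main step, which I expect to be the principal obstacle, is the bound $|V_c| - 2 \ge |T_\alpha|/m_2(G)$; this is the unique place where the hypothesis that $g$ completes $G'_-$ to a copy of $G$ is used. The key observation is that $T_\alpha \cup \{g\}$ is a subgraph of $G$ (not merely of $G_-$): its edges lie in $E(G_-) \cup \{g\} = E(G)$, and since every edge of $T_\alpha$ has both endpoints in $V_c$ and the endpoints $u_1, u_2$ of $g$ lie in $V_c$ by hypothesis, we have $V(T_\alpha \cup \{g\}) \seq V_c$. Applying the defining inequality $d_2 \le m_2(G)$ to this augmented subgraph (which has $|T_\alpha|+1$ edges) gives $|T_\alpha| \le m_2(G)\,(|V_c|-2)$, with the degenerate case $|T_\alpha|=0$ handled trivially via $|V_c| \ge 2$. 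Finally, since $G$ is not a matching it contains a path of length two and hence $m_2(G) \ge 1$, so $|T_\beta|(1-1/m_2(G)) + |T_\gamma|(2-1/m_2(G)) \ge 0$; combining this with the bound on $|V_c|-2$ yields~($\star\star$), completing the argument.
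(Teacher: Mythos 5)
Your proof is correct and follows essentially the same route as the paper's: both decompose $J$ along the copies of $H$ forming $G_-^H$, apply \eqref{eq:exponent-1} to each nontrivial piece $J_e$, absorb the vertex-overcounting into $L(G_-^H)$, discard the nonnegative contributions from edges of $G'_-$ with at most one endpoint in $V(J)\cap V(G'_-)$ (your $T_\beta\cup T_\gamma$) using $m_2(G)\geq 1$, and finally apply the $2$-density bound to the subgraph of $G$ obtained by adjoining $g$ to the edges with both endpoints in $V(J)\cap V(G'_-)$ (your $T_\alpha$, the paper's $I_0$). The only cosmetic difference is that you avoid the paper's preliminary reduction to induced $J$ by working directly with $T_\alpha$ rather than $E(J)\cap E(G'_-)$.
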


\begin{proof}
Note that it suffices to prove the claim for induced subgraphs $J\seq G^H_{-}$. We consider a fixed such subgraph $J$ and decompose it as follows. Let $E':=
E(G'_{-})$ denote the edge set of the central copy $G'_-$. For $f\in E'$, let $J_f$ denote the intersection of $J$
with the corresponding copy of $H$ in $G^H_{-}$ (the graph $J_f$ may contain isolated vertices). Furthermore,
let $I_0$ denote the intersection of $J$ with $G'_{-}$, and set
$V_0:=V(I_0)=V(J)\cap V(G'_-)$, $E_0:=E(I_0)=E(J)\cap E(G'_-)$. Observe that the assumption that $J$ is an induced subgraph of $G^H_{-}$ implies that also $I_0$ is an induced subgraph of $G'_-$. Furthermore, due to our assumption that $J$ contains the two vertices of $g$, also $I_0$ contains the two vertices of $g$.

Note that
\begin{equation}\label{eq:eJ}
\begin{split}
e(J)&= \sum_{\substack{f\in E'\\v(J_f)\geq 2}} e(J_f) 
\end{split}
\end{equation}
and
\begin{equation}\label{eq:vJ}
\begin{split}
v(J)& \geq v(I_0) + \sum_{\substack{f \in E'}} \big(v(J_f) - |f\cap V_0|\big) - L(G^H_{-}) \\
& \geq v(I_0) + \sum_{\substack{f\in E'\\v(J_f)\geq 2}} \big(v(J_f) - |f\cap V_0|\big) - L(G^H_{-})\,, 
\end{split}
\end{equation}
where the first inequality is due to the fact that the big sum
overcounts the actual number of vertices of $J$ by at most $L(G^H_{-})$
(i.e., $J\seq G_{-}^H$ `loses' at most as many vertices as $G_{-}^H$
because of vertex-overlapping copies of~$H$).

Combining~\eqref{eq:eJ} and~\eqref{eq:vJ} yields that
\begin{align*} 
	v(J)-\frac{e(J)}{m_2(G,H)} 
	& \geq v(I_0) + \sum_{\substack{f\in E'\\v(J_f)\geq 2}} T\(v(J_f)-|f\cap V_0| - \frac{e(J_f)}{m_2(G,H)}\) - L(G^H_{-})\\
	& \geBy{eq:exponent-1} 
	v(I_0) + \sum_{\substack{f\in E'\\v(J_f)\geq 2}} \(2 - |f\cap V_0| - \frac{1}{m_2(G)}\)- L(G^H_{-})\\ 
	&= 
	v(I_0) - \frac{e(I_0)}{m_2(G)} + \sum_{\substack{f\in E'\setminus E_0\\v(J_f)\geq 2\\ |f\cap V_0|\leq 1 }} \(2 - |f\cap V_0| - \frac{1}{m_2(G)}\)- L(G^H_{-}) \,,
\end{align*}
where for the equality we used that the edges $f\in E'=E(G'_-)$ with $|f\cap V_0|=2$ are exactly the edges in $E_0=E(I_0)$ due to the fact that $I_0$ is an induced subgraph of $G'_-$. Using that $m_2(G)\geq 1$, we may omit the remaining sum, and observing that adding the edge $g$ to $I_0$ yields a graph $I^+_0$ that is isomorphic to a subgraph of $G$, we obtain further
\begin{equation*} 
\begin{split}
v(J) - \frac{e(J)}{m_2(G,H)} 
 &\geq v(I^+_0) - \frac{e(I^+_0)-1}{m_2(G)} - L(G_{_-}^H)\\
 &\geBy{eq:exponent-0}  2 - L(G_{_-}^H)\,, 
\end{split}
\end{equation*}
concluding the proof of Lemma \ref{lemma:goalJ}.
\end{proof}

\subsection{\texorpdfstring{The parameters $m^*(H)$ and $x^*(H)$}{The parameters m*(H) and x*(H)}}
In this section we introduce two graph parameters~$m^*(H)$ and
$x^*(H)$ that will play in important role in our proof. The parameter~$m^*(H)$ is a convenient quantity to capture the concept of $H$ being
`its own least frequent subgraph' that many authors have used before
(see Remark~\ref{rem:least-frequent} below). The parameter $x^*(H)$ is
a rescaled version of $m^*(H)$ that is tailored to the specifics of
the problem studied in this paper.

\begin{definition} \label{def:m-star}
For any graph $H$ with $v_H\geq 3$, let
\begin{equation} \label{eq:def_mstar}
m^*(H):= \min_{\substack{J\seq H:\\2\leq v_J<v_H}} \frac{e_H-e_J}{v_H-v_J}
\end{equation}
and, if $H$ is nonempty,
\begin{equation} \label{eq:def_x-star}
x^*(H):=\frac{m^*(H)}{e_H-m^*(H)(v_H-2)}\,.
\end{equation}  
\end{definition}

Note that for any graph $H$, the parameter $m^*(H)$ is nonnegative, and that $m^*(H)=0$ if and only if $H$ has an isolated vertex.
It follows from~\eqref{eq:def_mstar} that for any nonempty graph $H$ with $v_H\geq 3$ the parameter $x^*(H)$ as defined in~\eqref{eq:def_x-star} is well-defined and positive. Note that solving~\eqref{eq:def_x-star} for $m^*(H)$ yields
\begin{equation} \label{eq:x-star-m-star}
 m^*(H)=\frac{e_H}{v_H-2+1/x^*(H)}\,,
\end{equation}
which connects $m^*(H)$ to $d_2(G,H)$ as defined in~\eqref{eq:def_d2-asym}. More specifically, the point here is that comparing $m^*(H)$ to $d_2(G,H)$ can be formulated equivalently as comparing  $x^*(H)$ to $m_2(G)$.
\begin{remark} \label{rem:least-frequent}
It follows from the definition of $m^*(H)$ in~\eqref{def:m-star} that
\begin{equation*} v_J - \frac{e_J}{m^*(H)} \geq v_H-\frac{e_H}{m^*(H)}\,,
\end{equation*}
for all subgraphs $J\seq H$ with $v_J\geq 2$. Thus for $p\leq n^{-1/m^*(H)}$ we have
$n^{v_J}p^{e_J}\geq n^{v_H}p^{e_H}$ for all such $J$, which means that the expected number of copies of $H$ in $\Gnp$ does not exceed the expected number of copies of any subgraph $J\seq H$ with $v_J\geq 2$ by more than a constant factor.
\end{remark}

In some sense, both $m^*(H)$ and $x^*(H)$ measure `how balanced' $H$
is.  Below we will prove some general results that make this
precise. These will in particular imply the following lemma, which
restates the hypothesis of Theorem~\ref{thm:main result} in two
alternative forms that are more convenient for us.

\begin{lemma} \label{lemma:equivalence}
For any two nonempty graphs $G$ and $H$ with $v_H\geq 3$, the following statements are equivalent.
\begin{enumerate}[label=\rmlabel]
 \item\label{it:equivi} $H$ is strictly balanced \wrt $d_2(G, \cdot)$,
 \item\label{it:equivii} $m_2(G,H)<m^*(H)$,
 \item\label{it:equiviii} $m_2(G)< x^*(H)$.
\end{enumerate}
\end{lemma}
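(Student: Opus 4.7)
The plan is to establish (ii)$\Leftrightarrow$(iii) and (i)$\Leftrightarrow$(iii) separately. The pivotal observation is the identity~\eqref{eq:x-star-m-star}, $m^*(H) = e_H/(v_H - 2 + 1/x^*(H))$, obtained by solving~\eqref{eq:def_x-star} for $m^*(H)$. Since this has exactly the same shape as $d_2(G,H) = e_H/(v_H - 2 + 1/m_2(G))$ and since $x \mapsto e_H/(v_H - 2 + 1/x)$ is strictly increasing for $x > 0$, we obtain the ``bridge''
\begin{equation*}
  d_2(G,H) < m^*(H) \Longleftrightarrow m_2(G) < x^*(H),
\end{equation*}
which will link (iii) to each of the other two formulations.

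For (ii)$\Rightarrow$(iii), the chain $d_2(G,H) \leq m_2(G,H) < m^*(H)$ together with the bridge gives $m_2(G) < x^*(H)$ immediately. For (iii)$\Rightarrow$(ii), I would verify $d_2(G,J) < m^*(H)$ for every $J \seq H$ with $v_J \geq 2$ and $e_J \geq 1$. Subgraphs with $v_J = v_H$ satisfy $e_J \leq e_H$ and reduce to the bridge. For $v_J < v_H$, the definition~\eqref{eq:def_mstar} provides $e_H - e_J \geq m^*(H)(v_H - v_J)$, which rearranges to $e_J \leq m^*(H)(v_J - 2 + 1/x^*(H))$; combining with $1/m_2(G) > 1/x^*(H)$ yields $d_2(G,J) < m^*(H)$. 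Since $H$ has only finitely many subgraphs, the maximum $m_2(G,H)$ stays strictly below $m^*(H)$.

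For (i)$\Leftrightarrow$(iii) I would compare $d_2(G,H)$ and $d_2(G,J)$ for each proper $J \subsetneq H$. The case $v_J = v_H$ is trivial (smaller numerator), so suppose $v_J < v_H$ with $v_J \geq 2$. Clearing denominators shows that $d_2(G,H) > d_2(G,J)$ is equivalent to
\begin{equation*}
  (e_H - e_J)\bigl(1/m_2(G) + v_J - 2\bigr) > e_J (v_H - v_J).
\end{equation*}
Under (iii), chaining $e_H - e_J \geq m^*(H)(v_H - v_J)$ with $m^*(H)/m_2(G) > m^*(H)/x^*(H) = e_H - m^*(H)(v_H - 2)$ verifies this inequality, giving (i). Conversely, take $J^* \subsetneq H$ realizing the minimum $(e_H - e_{J^*})/(v_H - v_{J^*}) = m^*(H)$ in~\eqref{eq:def_mstar}; then (i) forces the displayed inequality at $J = J^*$, and after substituting $e_{J^*} = e_H - m^*(H)(v_H - v_{J^*})$ and cancelling $v_H - v_{J^*}$ it collapses exactly to $m^*(H)/m_2(G) > m^*(H)/x^*(H)$, hence to (iii).

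The only subtlety I anticipate is the degenerate case where $H$ has an isolated vertex: then $m^*(H) = 0$, $x^*(H) = 0$, and all three statements fail ((i) fails by removing that vertex; (ii) and (iii) demand a positive number be less than $0$), so the equivalence holds vacuously. Apart from this corner, $m^*(H) > 0$ and all of the strict inequalities above are genuine; the remainder of the argument is routine algebra built around the single identity~\eqref{eq:x-star-m-star}.
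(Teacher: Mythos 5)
Your proof is correct, and the pivotal ingredient is the same as in the paper: the identity $m^*(H)=e_H/(v_H-2+1/x^*(H))$ (the paper's equation~\eqref{eq:x-star-m-star}, recast as $m^*(H)=d_2(x^*(H),H)$) together with monotonicity of $x\mapsto e_H/(v_H-2+1/x)$. What you do differently is twofold. First, the paper routes both equivalences through (\textit{i}): it proves a general lemma (Lemma~\ref{lemma:mpq}) stating that for \emph{any} density measure $d_{a,b}(H)=(e_H-a)/(v_H-b)$, strict balancedness of~$H$ w.r.t.\ $d_{a,b}$ is equivalent to $m^*(H)>d_{a,b}(H)$; the proof there is one application of the ratio comparison in Fact~\ref{fact:ratios}. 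Specializing to $a=0$, $b=2-1/m_2(G)$ gives (\textit{i})$\Leftrightarrow$(\textit{ii}), and combining with the identity and monotonicity (Lemma~\ref{lemma:balanced}) gives (\textit{i})$\Leftrightarrow$(\textit{iii}). You instead take (\textit{iii}) as the hub and verify both (\textit{ii})$\Leftrightarrow$(\textit{iii}) and (\textit{i})$\Leftrightarrow$(\textit{iii}) by hand. Second, you inline the ratio manipulations rather than isolating them in an abstract lemma. The paper's factorization through $d_{a,b}$ is cleaner and records a reusable fact (Lemma~\ref{lemma:mpq} is indeed used elsewhere, e.g.\ inside Lemma~\ref{lemma:balanced}), whereas your version is self-contained and arguably more transparent about which inequalities from the definition of $m^*(H)$ are actually being used where. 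Your treatment of the degenerate case ($H$ with an isolated vertex, $m^*(H)=x^*(H)=0$, all three statements failing) is a worthwhile explicit sanity check that the paper leaves implicit; note also that your choice of a minimizer $J^*$ in (\textit{i})$\Rightarrow$(\textit{iii}) necessarily has $e_{J^*}\geq 1$ once $H$ is nonempty and free of isolated vertices, so the ``clearing denominators'' step is legitimate there.
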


Lemma~\ref{lemma:equivalence} will be proved in
Section~\ref{sec:2.3.2} below.

\subsubsection{The parameter $m^*(H)$ and general density measures}
\label{sec:2.3.1}

For arbitrary (possibly negative) values $a\leq 1$ and $b < 2$, we
define for any graph $H$ the density measure
\begin{equation} \label{eq:def_dpq}
  d_{a,b}(H)
  := \begin{cases}
   \displaystyle \frac{e_H-a}{v_H-b} & \text{if $e_H\geq 1$}\\
   0 & \text{otherwise},
   \end{cases}
\end{equation}
and set
$$m_{a,b}(H):=\max_{J\seq H} d_{a,b}(J)\,.$$
As usual we say that $H$ is \emph{balanced \wrt $d_{a,b}$} if $m_{a,b}(H)=d_{a,b}(H)$, and \emph{strictly balanced \wrt $d_{a,b}$} if in addition $m_{a,b}(H)>d_{a,b}(J)$ for all proper subgraphs $J\subsetneq H$.

\begin{lemma} \label{lemma:mpq} Let $d_{a,b}$ be a density measure as
  in~\eqref{eq:def_dpq}.  A graph $H$ with $v_H\geq 3$ is balanced
  w.r.t.~$d_{a,b}$ if and only if $m^*(H)\geq d_{a,b}(H)$ (or, equivalently, if and only if $m^*(H)\geq m_{a,b}(H)$).  Similarly,
  a graph $H$ with $v_H\geq 3$ is strictly balanced w.r.t.~$d_{a,b}$
  if and only if $m^*(H) > d_{a,b}(H)$ (or, equivalently, if and only if $m^*(H) > m_{a,b}(H)$).
\end{lemma}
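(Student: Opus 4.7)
The plan is to translate the ordering $d_{a,b}(J)\le d_{a,b}(H)$ into a comparison between the ratio $(e_H-e_J)/(v_H-v_J)$ and $d_{a,b}(H)$, so that the minimum over $J$ appearing in the definition of $m^*(H)$ becomes transparently equivalent to the balancedness condition. Since $b<2\le v_J$ whenever $v_J\ge 2$, both $v_J-b$ and $v_H-b$ are strictly positive, so for any such $J$ with $e_J\ge 1$ one may cross-multiply to rewrite $d_{a,b}(J)\le d_{a,b}(H)$ as $(e_J-a)(v_H-b)\le(e_H-a)(v_J-b)$; subtracting $(e_H-a)(v_H-b)$ from both sides gives the equivalent form
\[
	(e_H-e_J)(v_H-b)\;\ge\;(e_H-a)(v_H-v_J),
\]
i.e., whenever $v_J<v_H$,
\[
	\frac{e_H-e_J}{v_H-v_J}\;\ge\;d_{a,b}(H).
\]
The analogous chain with strict inequalities is obtained by exactly the same manipulation.

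For the non-strict statement, I would then minimize the left-hand side above over all $J\subseteq H$ with $2\le v_J<v_H$; this converts the condition ``$d_{a,b}(J)\le d_{a,b}(H)$ for every such $J$'' into the condition $m^*(H)\ge d_{a,b}(H)$. What remains is to dispose of the subgraphs excluded from the index range of $m^*(H)$. The case $J=H$ is tautological; spanning subgraphs with $v_J=v_H$ share the denominator of $d_{a,b}(H)$ and therefore satisfy $d_{a,b}(J)\le d_{a,b}(H)$ automatically since $e_J\le e_H$; and subgraphs with $v_J\le 1$ have $e_J=0$, hence $d_{a,b}(J)=0\le d_{a,b}(H)$, using that $a\le 1\le e_H$ (or $e_H=0$) forces $d_{a,b}(H)\ge 0$. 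This establishes the first characterization, and the equivalence with ``$m^*(H)\ge m_{a,b}(H)$'' follows at once: balancedness means $m_{a,b}(H)=d_{a,b}(H)$, while the reverse implication uses $m_{a,b}(H)\ge d_{a,b}(H)$.

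The strict version proceeds identically, with strict inequalities throughout. The one point that requires separate attention is the boundary case $v_J\le 1$: in order to conclude $d_{a,b}(J)=0<d_{a,b}(H)$ I need $d_{a,b}(H)>0$, and this is the main (minor) obstacle I anticipate. I would handle it by showing that the assumption $m^*(H)>d_{a,b}(H)$ itself forces $d_{a,b}(H)>0$. Indeed, given $a\le 1$ and $e_H\ge 0$, the only ways to have $d_{a,b}(H)=0$ are $e_H=0$ (in which case $m^*(H)=0$ as well) or $a=1=e_H$; in the latter case $H$ consists of a single edge together with $v_H-2\ge 1$ isolated vertices, and taking $J$ to be this edge yields $(e_H-e_J)/(v_H-v_J)=0$, so again $m^*(H)=0$. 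In either scenario $m^*(H)>d_{a,b}(H)$ would fail, so strict inequality guarantees $d_{a,b}(H)>0$, the boundary cases $v_J\le 1$ and $v_J=v_H$ with $e_J<e_H$ become harmless, and the argument runs through exactly as in the non-strict case.
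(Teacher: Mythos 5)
Your proof is correct and follows essentially the same line as the paper's own: the cross-multiplication you carry out is exactly what the paper packages as its Fact~\ref{fact:ratios}, and both arguments then identify the resulting minimization over proper subgraphs with the comparison of $m^*(H)$ to $d_{a,b}(H)$. The extra care you take with the degenerate subgraphs (spanning ones, singletons, and the observation that $m^*(H)>d_{a,b}(H)$ forces $d_{a,b}(H)>0$ in the strict case) merely spells out details the paper disposes of in a parenthetical or by declaring the strict case ``analogous.''
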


For the proof we use the following elementary observation, which we
state separately for further reference.

\begin{fact}
  \label{fact:ratios}
  For $a$, $c\in\RR$ and $b > d > 0$, we
  have
  $$ \quad \frac{c}{d}\leq \frac{a}{b}\,\Longleftrightarrow \quad \frac{a-c}{b-d}\geq \frac{a}{b}$$
  and, similarly,
  $$ \quad \frac{c}{d}< \frac{a}{b}\,\Longleftrightarrow \quad
  \frac{a-c}{b-d}> \frac{a}{b}\,.$$ 
\end{fact}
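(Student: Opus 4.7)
The plan is to reduce both inequalities to a common cross-multiplied form and observe that they coincide. Since $b>0$ and $d>0$, the inequality $c/d\leq a/b$ is equivalent (by multiplying through by the positive quantity $bd$) to $cb\leq ad$. On the other hand, since $b>d>0$ we have $b-d>0$, so multiplying the second inequality $(a-c)/(b-d)\geq a/b$ through by the positive quantity $b(b-d)$ yields the equivalent form $(a-c)b\geq a(b-d)$. Expanding both sides and cancelling the common term $ab$, this simplifies to $-cb\geq -ad$, that is, $cb\leq ad$.

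Thus both inequalities are equivalent to the single condition $cb\leq ad$, which establishes the first equivalence. For the strict version, I would run precisely the same chain of equivalences, noting that every multiplication is by a strictly positive quantity (namely $bd$ or $b(b-d)$) and therefore preserves strict inequality in both directions; both strict statements reduce to $cb<ad$.

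There is no substantive obstacle. The only point requiring care is that the scalars one multiplies by must be positive, which is exactly why the hypothesis $b>d>0$ is used (rather than, say, merely $b,d\neq 0$): it guarantees both $bd>0$ and $b-d>0$, so that none of the equivalences in the chain inadvertently flip the direction of the inequality.
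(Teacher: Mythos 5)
Your proof is correct. The paper states this as an elementary observation and gives no proof, so there is nothing to compare against; your cross-multiplication argument reducing both sides to $cb\leq ad$ (respectively $cb<ad$) is exactly the standard verification one would supply, and you correctly flag that the hypotheses $b>0$, $d>0$, and $b-d>0$ are what license multiplying by positive quantities without reversing the inequalities.
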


\begin{proof}[Proof of Lemma~\ref{lemma:mpq}] 
Observe that $H$ is balanced w.r.t. $d_{a,b}$ if and only if for all subgraphs $J\seq H$ with $v_J\geq 2$ we have
$$\frac{e_J-a}{v_J-b}\leq \frac{e_H-a}{v_H-b}$$
(note that this condition is always satisfied for graphs $J$ with $v_J\geq 2$ and $e_J=0$).
By Fact~\ref{fact:ratios}, this is equivalent to the requirement that
\begin{equation*}
	\frac{e_H-e_J}{v_H-v_J}
	=  \frac{(e_H-a)-(e_J-a)}{(v_H-b)-(v_J-b)}\stackrel{\text{Fact~\ref{fact:ratios}}}{\geq} \frac{e_H-a}{v_H-b}
\end{equation*}
for all subgraphs $J\seq H$ with $2\leq v_J< v_H$, i.e., to $m^*(H)\geq d_{a,b}(H)$.

The statement for `strictly balanced' follows analogously using the second statement of Fact~\ref{fact:ratios}.
\end{proof}

\subsubsection{The parameter $x^*(H)$ and the asymmetric $2$-density} 
\label{sec:2.3.2}

As a consequence of Lemma~\ref{lemma:mpq} we obtain the next lemma, which is specifically concerned with the asymmetric $2$-density.

\begin{lemma} \label{lemma:balanced} Let $G$ be a nonempty graph.  A
  nonempty graph $H$ with $v_H\geq 3$ is balanced \wrt $d_2(G,\cdot)$
  if and only if $m_2(G)\leq x^*(H)$.  Similarly, a nonempty graph $H$
  with $v_H\geq 3$ is strictly balanced \wrt $d_2(G,\cdot)$ if and
  only if $m_2(G)< x^*(H)$.
\end{lemma}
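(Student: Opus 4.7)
The plan is to derive Lemma~\ref{lemma:balanced} as a direct corollary of Lemma~\ref{lemma:mpq} by instantiating the parameters $(a,b)$ in the generic density measure $d_{a,b}$ so that $d_{a,b}$ specializes to $d_2(G,\cdot)$, and then translating the resulting inequality on $m^*(H)$ into an inequality on $x^*(H)$ via formula~\eqref{eq:x-star-m-star}.

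First, I would observe that for any nonempty graph $G$ one has $m_2(G)\geq 1/2>0$, and that the definition of $d_2(G,H)$ in~\eqref{eq:def_d2-asym} can be rewritten, for graphs $H$ with $e_H\geq 1$, as
\begin{equation*}
d_2(G,H) \;=\; \frac{e_H}{v_H - \bigl(2 - 1/m_2(G)\bigr)} \;=\; d_{a,b}(H)
\end{equation*}
with $a:=0\leq 1$ and $b:=2-1/m_2(G)<2$. In particular the notions of balanced and strictly balanced with respect to $d_2(G,\cdot)$ coincide with the corresponding notions with respect to $d_{a,b}$, so Lemma~\ref{lemma:mpq} applies.

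Second, I would invoke Lemma~\ref{lemma:mpq}: $H$ is balanced \wrt $d_2(G,\cdot)$ iff $m^*(H)\geq d_2(G,H)$, and strictly balanced iff $m^*(H)>d_2(G,H)$. Using~\eqref{eq:x-star-m-star}, which reads $m^*(H)=e_H/(v_H-2+1/x^*(H))$, the inequality $m^*(H)\geq d_2(G,H)$ is equivalent to
\begin{equation*}
\frac{e_H}{v_H-2+1/x^*(H)} \;\geq\; \frac{e_H}{v_H-2+1/m_2(G)},
\end{equation*}
which, since $e_H\geq 1$ and both denominators are positive (the right-hand denominator equals $v_H-b>v_H-2\geq 1$, and the left-hand one likewise since $x^*(H)>0$), simplifies to $1/x^*(H)\leq 1/m_2(G)$, i.e.\ $m_2(G)\leq x^*(H)$. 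The analogous manipulation with strict inequalities handles the strictly balanced case.

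I do not anticipate any real obstacle: the only subtle point is checking that $x^*(H)>0$, so that the quantity $1/x^*(H)$ in~\eqref{eq:x-star-m-star} makes sense and the manipulation of reciprocals is valid. This is precisely the positivity of $x^*(H)$ noted in the paragraph following Definition~\ref{def:m-star}, which holds for nonempty $H$ with $v_H\geq 3$ (assuming, as is customary in this context, that $H$ has no isolated vertex so that $m^*(H)>0$). Everything else is just unwinding definitions and quoting Lemma~\ref{lemma:mpq}.
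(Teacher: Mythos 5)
Your proof is correct and essentially identical to the paper's: both specialize Lemma~\ref{lemma:mpq} to the density measure $d_{a,b}$ with $a=0$ and $b=2-1/m_2(G)$, then use~\eqref{eq:x-star-m-star} to convert the resulting inequality on $m^*(H)$ into one on $x^*(H)$. The paper phrases the final step as monotonicity in $x$ of an auxiliary function $d_2(x,H)$, which is exactly the reciprocal manipulation you carry out explicitly.
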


\begin{proof} For any $x> 0$ and any graph $H$ with $v_H\geq 2$, let 
\begin{equation}\label{eq:def_d2xH}
  d_2(x,H)
  := \begin{cases}
   \displaystyle \frac{e_H}{v_H-2+1/x} & \text{if $e_H\geq 1$}\\
   0 & \text{otherwise},
   \end{cases}
\end{equation}
and set $$m_2(x,H):=\max_{\substack{J\seq H}}
d_2(x,J)\,.$$

Note that $d_2(m_2(G),H)$ as defined in \eqref{eq:def_d2xH} coincides with~$d_2(G,H)$ as defined in~\eqref{eq:def_d2-asym}, and that,  according to~\eqref{eq:x-star-m-star}, for any nonempty graph $H$ with $v_H\geq 3$ we have
\begin{equation} \label{eq:x-m}
 m^*(H) = d_2(x^*(H),H)\,.
\end{equation}  
With Lemma~\ref{lemma:mpq} we obtain that $H$ is balanced \wrt $d_2(G,\cdot)$ if and only if $$d_2(x^*(H),H)\eqBy{eq:x-m} m^*(H)\geByM{\text{L.~\ref{lemma:mpq}}} d_2(G,H) =  d_2(m_2(G),H)\,.$$
 Since $d_2(x,H)$ is monotone increasing in $x$, this is equivalent to $m_2(G)\leq x^*(H)$, as claimed.

The statement for `strictly balanced' follows analogously using the second statement of Lemma~\ref{lemma:mpq}.
\end{proof}

\begin{proof}[Proof of Lemma~\ref{lemma:equivalence}]
The equivalence of~\ref{it:equivi} and~\ref{it:equivii}
 follows from Lemma~\ref{lemma:mpq}, observing that, for any
fixed nonempty graph $G$, the parameter $d_2(G,H)$ defined
in~\eqref{eq:def_d2-asym} is a density measure as
in~\eqref{eq:def_dpq} (with $a=0$ and $b=2-1/m_2(G)$).

The equivalence of~\ref{it:equivi} and~\ref{it:equiviii} is stated in Lemma~\ref{lemma:balanced}.
\end{proof}

\subsection{Other preliminaries} As already mentioned, we will make crucial use of the Harris inequality~\cite{MR0115221} (which also arises as a special case of the FKG inequality~\cite{MR0309498} and various other related inequalities). 

Throughout, we will assume that the random graph $\Gnp$ is generated
on the vertex set $[n]=\{1,\dots, n\}$.  For the purposes of this
paper, a graph property is a family of \emph{labelled} graphs on the vertex set $[n]$
(which is not necessarily closed under isomorphism), where~$n$ will be
clear from the context. We say that a
graph property $\cA$ is \emph{decreasing} if for any two graphs $G$
and $H$ on vertex set $[n]$ the following holds: if $G\in\cA$ and $H\seq G$, we
also have~$H\in\cA$. Similarly, we say that a graph property $\cA$
is \emph{increasing} if for any two graphs~$G$ and $H$ on vertex set $[n]$ the
following holds: if $G\in\cA$ and $H\supseteq G$, we also have~$H\in\cA$. Note that the complement of a decreasing property is
increasing, and vice versa.

\begin{theorem} [Harris~\cite{MR0115221}] \label{thm:fkg} For any two
  decreasing (increasing) graph properties $\cA$ and $\cB$ and any
  $n\in\NN$ and $0\leq p\leq 1$, we have
  $$
  \Pr(\Gnp\in\cA\cap \cB)\geq\Pr(\Gnp\in\cA)\Pr(\Gnp\in\cB)\,,
  $$
  or, equivalently if $\Pr(\Gnp\in\cB)>0$,
  $$
  \Pr(\Gnp\in\cA\,|\,\Gnp\in\,\cB)\geq \Pr(\Gnp\in\cA)\,.
  $$
\end{theorem}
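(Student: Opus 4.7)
The plan is to derive Harris' inequality via the standard induction on the number of independent Bernoulli coordinates. View $\Gnp$ as a product measure on $\{0,1\}^N$ with $N=\binom{n}{2}$, so that it suffices to prove the following combinatorial statement: whenever $f,g\colon\{0,1\}^N\to\RR$ are both coordinate-wise decreasing and $X=(X_1,\dots,X_N)$ has independent Bernoulli$(p)$ coordinates, then $\EE[fg]\geq \EE[f]\EE[g]$. Specializing to $f=\mathbf{1}_\cA$ and $g=\mathbf{1}_\cB$ yields the theorem for decreasing properties, and the increasing case then follows by passing to complements via the identity
\[
\Pr(\cA\cap\cB)-\Pr(\cA)\Pr(\cB) = \Pr(\cA^c\cap\cB^c)-\Pr(\cA^c)\Pr(\cB^c).
\]

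The base case $N=1$ is a one-line computation and is the heart of the argument: with $X_1\sim\mathrm{Bernoulli}(p)$,
\[
\EE[fg]-\EE[f]\EE[g] = p(1-p)\bigl(f(1)-f(0)\bigr)\bigl(g(1)-g(0)\bigr),
\]
and since $f$ and $g$ are both decreasing, the two factors in parentheses have the same sign, so their product is nonnegative.

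For the inductive step $N-1\to N$, I would condition on the last coordinate $X_N$ and introduce the partial expectations $F(x):=\EE[f(x,X_N)]$ and $G(x):=\EE[g(x,X_N)]$ for $x\in\{0,1\}^{N-1}$. Applying the base case pointwise to the two single-variable functions $t\mapsto f(x,t)$ and $t\mapsto g(x,t)$ gives $\EE[f(x,X_N)g(x,X_N)]\geq F(x)G(x)$ for every $x$, and integrating in $(X_1,\dots,X_{N-1})$ yields $\EE[fg]\geq \EE[FG]$. The induction hypothesis applied to $F$ and $G$ then gives $\EE[FG]\geq \EE[F]\EE[G]=\EE[f]\EE[g]$, closing the induction.

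The one delicate point, which I expect a careless proof to gloss over, is checking that $F$ and $G$ are themselves decreasing functions of the remaining $N-1$ coordinates, so that the induction hypothesis is actually applicable to them. This is immediate from linearity of expectation: for each fixed value of $X_N$ the slice $f(\cdot,X_N)$ is decreasing in the other coordinates, and a convex combination of two decreasing functions is again decreasing. With this observation in place the induction carries through with no further subtleties, and the whole argument collapses to the two-point positive correlation inequality that powers the base case.
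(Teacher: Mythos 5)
Your proof is correct. There is no proof to compare against in the paper itself: Theorem~\ref{thm:fkg} is stated as a cited result of Harris and used as a black box, so the paper contains no proof of it. Your argument is the standard textbook proof of Harris's inequality by induction on the number of independent Bernoulli coordinates, and every step checks out. The base case computation
\[
\EE[fg]-\EE[f]\EE[g] = p(1-p)\bigl(f(1)-f(0)\bigr)\bigl(g(1)-g(0)\bigr)
\]
is exact, and since $f,g$ are both decreasing the two differences are both nonpositive, so the product is nonnegative. The inductive step via the partial expectations $F,G$ and the pointwise application of the base case to the slices $t\mapsto f(x,t)$, $t\mapsto g(x,t)$ is the right decomposition, and you are correct to flag that the crucial thing to check is that $F$ and $G$ are themselves decreasing in the remaining $N-1$ coordinates (immediate from the fact that each of $f(\cdot,0)$, $f(\cdot,1)$ is decreasing and $F$ is a convex combination of them). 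The reduction from increasing to decreasing properties via the identity $\Pr(\cA\cap\cB)-\Pr(\cA)\Pr(\cB)=\Pr(\cA^c\cap\cB^c)-\Pr(\cA^c)\Pr(\cB^c)$ is also correct: both sides equal $\Pr(\cA\cap\cB)-\Pr(\cA)\Pr(\cB)$ after expanding with inclusion--exclusion. The only cosmetic remark is that you prove the more general statement for real-valued monotone functions, which is slightly stronger than the indicator-function version the paper uses, but that generality costs nothing and is the natural level at which the induction runs.
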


Clearly, it follows from Theorem~\ref{thm:fkg} that, for the binomial
random graph~$\Gnp$ the probability of any decreasing (respectively,
increasing) event~$\cA$ does not decrease if we condition on another
decreasing (respectively, increasing) event~$\cB$.

Janson's inequality is a very useful tool in probabilistic
combinatorics. In many cases, it yields an exponential bound on lower
tails where the second moment method only gives a considerably weaker
bound. Here we formulate a version tailored to random graphs.
 
\begin{theorem}[Janson~\cite{MR1138428}]\label{thm:janson}
  Consider a family~$\cH=\{H_i \mid i\in I\}$ of subgraphs of the
  complete graph on the vertex set $[n]$. For each~$H_i \in \cH$, let~$X_i$ denote the
  indicator random variable for the event~$H_i \seq \Gnp$, and, for
  each ordered pair~$(H_i,H_j)\in\cH\times\cH$ with $i \neq j$, write~$H_i
  \sim H_j$ if~$H_i$ and~$H_j$ are not edge-disjoint. Let
\begin{eqnarray*}
  X &=& \sum_{H_i \in \cH} X_i \,,\\
  \mu &=& \EE[X] = \sum_{H_i \in \cH}p^{e(H_i)} \,,\\
  \Delta &=& \sum_{\substack{(H_i,H_j)\in\cH\times\cH \\ H_i \sim H_j}}
    \EE[X_{i}  X_{j}] = \sum_{\substack{(H_i,H_j)\in\cH\times\cH \\ H_i \sim H_j}} p^{e(H_i) + e(H_j) - e(H_i \cap H_j)}\,.
\end{eqnarray*}
Then for all~$0 \leq \delta \leq 1$ we have
\[
  \Pr(X \leq (1 - \delta)\mu) 
  \leq \ee^{-\frac{\delta^2 \mu^2}{2(\mu + \Delta)}} \,.
\]
\end{theorem}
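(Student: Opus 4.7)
The plan is to prove Janson's inequality by the exponential moment method. For any $t > 0$, an application of Markov's inequality to the non-negative random variable $e^{-tX}$ gives
\[\Pr(X \le (1-\delta)\mu) \;\le\; e^{t(1-\delta)\mu}\,\EE[e^{-tX}],\]
so the whole task reduces to an appropriate upper bound on the Laplace transform $M(t) := \EE[e^{-tX}]$.

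The heart of the argument, and the main technical obstacle, is to establish that for every $t \in [0,1]$,
\[M(t) \;\le\; \exp\!\bigl(-t\mu + \tfrac{t^2}{2}(\mu+\Delta)\bigr).\]
To get there I would write $e^{-tX_i} = 1 - (1-e^{-t})X_i$ (valid since $X_i \in \{0,1\}$), so that $M(t) = \EE\bigl[\prod_{i \in I}(1-(1-e^{-t})X_i)\bigr]$. If the $X_i$ were independent, the expectation would factor and the bound would follow at once from $\log(1-x) \le -x$ together with $1-e^{-t} \ge t - t^2/2$ on $[0,1]$; the genuine difficulty is absorbing the dependencies between indicators that share edges. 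My approach would be to reveal the $X_i$ one at a time in a fixed order and, at each step, use the Harris inequality (Theorem~\ref{thm:fkg}) applied to the decreasing events $\{X_j = 0\}$ to upper bound the next conditional probability by its unconditional value times a correction factor. Accumulating these corrections produces a quadratic term in which the cross contributions pair up precisely as the summands in the definition of $\Delta$, yielding the claimed bound on $M(t)$.

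With this in hand, the proof finishes routinely by combining the bound on $M(t)$ with the Markov estimate above:
\[\Pr(X \le (1-\delta)\mu) \;\le\; \exp\!\bigl(-t\delta\mu + \tfrac{t^2}{2}(\mu+\Delta)\bigr)\]
for every $t \in [0,1]$, and then optimizing in $t$. The right-hand side is minimized at $t^{\star} := \delta\mu/(\mu+\Delta)$, which lies in $[0,1]$ because $\delta \le 1$ and $\Delta \ge 0$, and substituting $t^{\star}$ produces the stated bound $\exp(-\delta^2\mu^2/(2(\mu+\Delta)))$. All the real content of the inequality sits in the upper bound on $M(t)$; once that is in place, the Chernoff-style optimization is a short calculation.
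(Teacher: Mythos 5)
The paper does not prove this statement; it simply cites Janson~(1990) and uses it as a black box, so there is no in-paper proof to compare against. Your outer framework is the standard one and your endgame is correct: granting the bound $\EE[e^{-tX}]\le\exp\bigl(-t\mu+\tfrac{t^2}{2}(\mu+\Delta)\bigr)$ for $t\in[0,1]$, the choice $t^\star=\delta\mu/(\mu+\Delta)$ does lie in $[0,1]$ (since $\delta\le1$ and $\Delta\ge0$) and substitution gives exactly the claimed exponent $-\delta^2\mu^2/(2(\mu+\Delta))$.

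The genuine gap is in the middle step, and it is a directional one. You propose to reveal the $X_i$ sequentially and apply Harris to the decreasing events $\{X_j=0\}$ to ``upper bound the next conditional probability.'' But conditioning on decreasing events makes the increasing event $\{X_i=1\}$ \emph{less} likely, i.e.\ $\Pr(X_i=1\mid\wedge_{j<i}\{X_j=0\})\le p_i$; plugged into the factor $1-(1-e^{-t})X_i$ this \emph{raises} each factor and hence pushes $\EE[e^{-tX}]$ \emph{up}, not down. This is consistent with the fact that positive correlation among the $X_i$ makes $X$ more likely to be small, so $M(t)$ genuinely exceeds the independent-case product and cannot be upper bounded by it. The sequential-reveal idea you are recalling is the Boppana--Spencer argument, and even in its correct form (which uses FKG to upper bound terms of the shape $\Pr(A_i\cap A_j\mid\wedge_k\bar A_k)$ by $\Pr(A_i\cap A_j)$) it directly yields only $\Pr(X=0)\le e^{-\mu+\Delta/2}$, not a bound on the moment generating function. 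The standard route to the MGF bound you need is different: set $\psi(t)=-\log\EE[e^{-tX}]$, write $\psi'(t)=\sum_i\EE[X_ie^{-tX}]/\EE[e^{-tX}]$, and lower bound each summand by conditioning on the \emph{increasing} event $\{X_i=1\}$. Splitting $X=X_i+Z_i+W_i$ with $Z_i=\sum_{j\sim i,\,j\ne i}X_j$ and $W_i=\sum_{j\not\sim i}X_j$, one uses FKG in the conditional space to get $\EE[X_ie^{-tX}]\ge e^{-t}p_i\,\EE[e^{-tZ_i}\mid X_i=1]\,\EE[e^{-tW_i}]$, combines with $\EE[e^{-tX}]\le\EE[e^{-tW_i}]$ and $e^{-x}\ge1-x$, and obtains $\psi'(t)\ge\mu-t(\mu+\Delta)$ on $[0,1]$; integrating then gives the required MGF bound. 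Your proposal would need its middle step replaced by an argument of this form.
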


Often Janson's inequality is applied with $\cH$ being the family of all copies of some given fixed graph $H$ in the complete graph $K_n$. The concept of $(\rho,d)$-denseness will allow us to derive very similar results when applying Janson's inequality  with $\cH$ being the family of all copies of $H$ in a graph $F\seq K_n$ that is not necessarily complete.

\begin{definition}
  \label{def:rho-d-dense} For any $\rho>0$ and $0< d\leq 1$, 
  a graph $F$ on vertex set $[n]$ is said to be \emph{$(\rho,d)$-dense}
  if for every subset $V\seq [n]$ with $|V|\geq \rho n$ we have
  $$
  e(F[V])\geq d \binom{n}{2}\,,
  $$  
  where $F[V]$ denotes the subgraph induced by $F$ on $V$.
\end{definition}

We will use the following fact (for a proof see e.g.~\cite{MR1276825}).
\begin{lemma}
  \label{lemma:rho-d-dense}
  For all $0<d\leq 1$ and $\ell\geq 1$, there exist positive constants $\rho$, $n_0$ and $c_0$ such that every $(\rho, d)$-dense graph on $n \geq n_0$ vertices contains
  at least $c_0 n^\ell$ complete subgraphs $K_\ell$.
\end{lemma}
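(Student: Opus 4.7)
My plan is to prove the lemma by induction on $\ell$. The base case $\ell=1$ is trivial (any vertex is a $K_1$, so $c_0=1$ works for any $\rho\leq 1$), and the case $\ell=2$ follows immediately from the $(\rho,d)$-density hypothesis applied with $V=[n]$: this yields $e(F)\geq d\binom{n}{2}\geq (d/4)n^2$ edges, each being a copy of $K_2$.

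For the inductive step, suppose the lemma holds for some $\ell\geq 1$ with constants $\rho_\ell$, $c_\ell$, $n_\ell$; fix $d>0$ and let $F$ be a $(\rho,d)$-dense graph on $n$ vertices, where $\rho=\rho_{\ell+1}>0$ will be chosen shortly. I first discard low-degree vertices: let $L:=\{v\in V(F)\colond \deg_F(v)< dn/4\}$ and observe the trivial upper bound $2\,e(F[L])\leq \sum_{v\in L}\deg_F(v)< |L|\cdot dn/4$. If $|L|\geq \rho n$ held, the $(\rho,d)$-density hypothesis applied with $V=L$ would force $e(F[L])\geq d\binom{n}{2}$; combining the two estimates yields $|L|>4(n-1)$, which is impossible for $n\geq 2$. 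Hence at least $(1-\rho)n$ vertices of $F$ have degree at least $dn/4$.

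Next I pass the density assumption to the neighborhoods of these high-degree vertices. For each such $v$, set $n':=|N_F(v)|\geq dn/4$; provided I arrange $\rho\leq \rho_\ell d/4$, the induced subgraph $F[N_F(v)]$ is itself $(\rho_\ell,d)$-dense on $n'$ vertices. Indeed, any $V\seq N_F(v)$ with $|V|\geq \rho_\ell n'$ then satisfies $|V|\geq \rho_\ell\cdot dn/4\geq \rho n$, so $e(F[V])\geq d\binom{n}{2}\geq d\binom{n'}{2}$. The inductive hypothesis therefore supplies at least $c_\ell (n')^\ell\geq c_\ell(dn/4)^\ell$ copies of $K_\ell$ inside $N_F(v)$, each of which extends with $v$ to a copy of $K_{\ell+1}$ in $F$.

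Finally, I sum these counts over the $\geq (1-\rho)n$ high-degree vertices and divide by $\ell+1$, since each $K_{\ell+1}$ of $F$ is counted exactly $\ell+1$ times in this sum; this delivers at least $c_{\ell+1}n^{\ell+1}$ copies of $K_{\ell+1}$, with $c_{\ell+1}:=(1-\rho)c_\ell(d/4)^\ell/(\ell+1)$, provided $n$ is larger than some $n_{\ell+1}$ absorbing the lower-order terms and the requirement $n'\geq n_\ell$. No substantive obstacle arises; the only item requiring care is fixing $\rho_{\ell+1}:=\rho_\ell d/4$ at the outset, so that the $(\rho,d)$-density of $F$ transfers cleanly to each induced neighborhood $F[N_F(v)]$ and the inductive hypothesis becomes applicable.
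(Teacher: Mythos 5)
Your proof is correct. Note that the paper does not give its own proof of Lemma~\ref{lemma:rho-d-dense} but refers to R\"odl and Ruci\'nski~\cite{MR1276825}, so there is no argument in the paper to compare against; your self-contained inductive argument (first remove low-degree vertices, then observe that the $(\rho,d)$-density of~$F$ is inherited by the induced subgraph on each large neighborhood and apply the inductive hypothesis there) is a clean and standard way to establish this counting fact. The one structural point that makes the inheritance step work is that Definition~\ref{def:rho-d-dense} bounds $e(F[V])$ by $d\binom{n}{2}$ in terms of the ambient vertex count~$n$ rather than $|V|$, so the estimate only improves when passing to a subset of the vertices; you use this correctly.

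Two cosmetic remarks. In the degree bound the equality should read $2\,e(F[L]) = \sum_{v\in L}\deg_{F[L]}(v) \le \sum_{v\in L}\deg_F(v)$; you wrote the middle sum with $\deg_F$, which makes the first step an inequality rather than an equality, but the conclusion is the same. And in the final step each copy of $K_{\ell+1}$ is counted \emph{at most} $\ell+1$ times (once per high-degree vertex it contains), not exactly $\ell+1$ times; since you only need a lower bound on the number of distinct copies after dividing, this does not affect the argument.
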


\subsection{Edge-disjoint copies} The tools and definitions presented in the previous section come together in the following technical lemma, which states that under the appropriate assumptions, a random subgraph of a $(\rho,d)$-dense graph contains a large family of pairwise edge-disjoint copies of a given graph $H$. The key idea of applying Tur\'{a}n's Theorem to a suitably defined auxiliary graph is due to Kreuter~\cite{MR1606853}. We will use Tur\'{a}n's Theorem in the following form (see e.g.~\cite{MR0384579}*{p.~282}). 
  
\begin{theorem}[Tur\'an] \label{thm:turan}
Let $G$ be a graph. Then $G$ has an independent set of size at least
$$\frac{v(G)^2}{v(G)+2e(G)}\,.$$
\end{theorem}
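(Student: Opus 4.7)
The plan is to prove the bound by the Caro--Wei probabilistic argument, the key observation being the identity $v(G)+2e(G)=\sum_{v\in V(G)}(d(v)+1)$, where $d(v)$ denotes the degree of $v$ in $G$. This makes the claim equivalent, via the AM--HM inequality, to the classical Caro--Wei bound $\alpha(G)\geq \sum_v 1/(d(v)+1)$.

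First I would draw a uniformly random linear order $\pi$ of $V(G)$ and define $I=I(\pi)$ to be the set of vertices $v$ that precede every neighbor of $v$ under $\pi$. The set $I$ is independent: whenever $uv\in E(G)$, the one of $u$, $v$ that appears later in $\pi$ has a neighbor (the other endpoint) that appears earlier, so at most one of the two can lie in $I$.

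For each vertex $v$, the event $\{v\in I\}$ depends only on the relative order of $\{v\}\cup N(v)$ under $\pi$, which is a uniformly random permutation of a set of size $d(v)+1$. Hence $\Pr(v\in I)=1/(d(v)+1)$, and by linearity of expectation
\[
\EE[|I|]=\sum_{v\in V(G)}\frac{1}{d(v)+1}.
\]
Some realization of $\pi$ must attain at least the expected value, so $G$ contains an independent set of that size.

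Finally, by the Cauchy--Schwarz (equivalently, AM--HM) inequality applied to the positive reals $d(v)+1$,
\[
\sum_{v\in V(G)}\frac{1}{d(v)+1}\;\geq\;\frac{v(G)^2}{\sum_{v\in V(G)}(d(v)+1)}\;=\;\frac{v(G)^2}{v(G)+2e(G)},
\]
which is the stated bound. The argument is essentially free of obstacles; the only point requiring care is verifying that $\{v\}\cup N(v)$ inherits a uniform relative order from the uniform $\pi$, which is immediate. I would prefer this Caro--Wei route over deducing the inequality from the extremal (edge-counting) form of Tur\'an's theorem applied to the complement of $G$, since the probabilistic proof is short and self-contained.
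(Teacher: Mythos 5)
Your proof is correct. Note, however, that the paper does not prove this statement at all: it simply cites it from Berge's book (``see e.g.\ \cite{MR0384579}*{p.~282}''), so there is no in-paper argument to compare against. The Caro--Wei argument you give is a standard and clean route to exactly this bound: the random-permutation construction yields an independent set of expected size $\sum_v 1/(d(v)+1)$, and the Cauchy--Schwarz (AM--HM) step
\[
\sum_{v}\frac{1}{d(v)+1}\;\geq\;\frac{v(G)^2}{\sum_{v}(d(v)+1)}=\frac{v(G)^2}{v(G)+2e(G)}
\]
is the right way to pass from the degree-sequence form to the average-degree form stated in the paper. All the individual steps check out: $I(\pi)$ is indeed independent since the later endpoint of any edge has an earlier neighbor; the event $\{v\in I\}$ is determined by the uniformly random relative order of the $d(v)+1$ elements of $\{v\}\cup N(v)$, so $\Pr(v\in I)=1/(d(v)+1)$; and linearity plus the first-moment method gives the claimed deterministic existence. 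Your closing remark is also apt: one could instead apply the edge form of Tur\'an's theorem to $\overline{G}$, but that detour is no shorter and requires the extremal characterization, whereas the probabilistic proof is self-contained.
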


\begin{lemma} \label{lemma:disjoint} Let $H$ be a nonempty graph with
  $v_H\geq 3$. For any $0< d\leq 1$, there exist positive constants
  $\rho$, $n_0$ and $b$ such that for $n\geq n_0$ and $p\leq
  n^{-1/m^*(H)}$ the following holds: If $F\seq K_n$ is a
  $(\rho,d)$-dense graph on $n$ vertices, then, with probability at
  least $1-2^{-bn^{v_H}p^{e_H}+1}$, the graph $F\cap\Gnp$ contains a
  family of at least $bn^{v_H}p^{e_H}$ pairwise edge-disjoint copies
  of~$H$.
\end{lemma}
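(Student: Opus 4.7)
The plan is to combine a lower-tail estimate via Janson's inequality with Kreuter's Tur\'an-based trick for extracting an edge-disjoint subfamily. First, applying Lemma~\ref{lemma:rho-d-dense} with $\ell = v_H$ and a suitable $\rho = \rho(d, v_H)$, the $(\rho, d)$-density of $F$ yields at least $c_0 n^{v_H}$ copies of $K_{v_H}$ in $F$, and hence $\Omega(n^{v_H})$ copies of $H$ in $F$.

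Let $X$ denote the number of these $H$-copies that survive in $\Gnp$, so $\mu := \mathbb{E}[X] = \Theta(n^{v_H} p^{e_H})$. To apply Janson's inequality (Theorem~\ref{thm:janson}) to $X$, I bound $\Delta$ by grouping pairs of edge-sharing copies according to the isomorphism type of their intersection $J\subsetneq H$ with $v_J \geq 2$ and $e_J \geq 1$:
\[
  \Delta \;\leq\; O(\mu) \cdot \sum_{J} n^{v_H - v_J}\, p^{e_H - e_J}.
\]
Definition~\ref{def:m-star} yields $e_H - e_J \geq m^*(H)(v_H - v_J)$ for each such $J$, which combined with $p \leq n^{-1/m^*(H)}$ gives $n^{v_H - v_J} p^{e_H - e_J} \leq 1$; hence $\Delta \leq C\mu$. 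Janson with $\delta = 1/2$ then yields $X \geq \mu/2$ with probability at least $1 - \exp(-c_1 \mu)$.

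For the edge-disjoint subfamily I apply Kreuter's trick via Tur\'an's theorem (Theorem~\ref{thm:turan}): define the \emph{conflict graph} $\cA$ whose vertices are the copies of $H$ in $F \cap \Gnp$ and whose edges join pairs that share at least one edge in $\Gnp$. Independent sets in $\cA$ are precisely edge-disjoint families, and Tur\'an gives $\alpha(\cA) \geq X^2/(X + 2Y)$ with $Y := e(\cA)$. A computation analogous to the one for $\Delta$ gives $\mathbb{E}[Y] = O(\mu)$, so provided that $X \geq \mu/2$ and $Y \leq K\mu$ hold \emph{simultaneously} with the claimed exponentially-small failure probability, Tur\'an delivers $\alpha(\cA) \geq b\mu$ with $b := 1/(4 + 8K)$, as required.

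The main obstacle is precisely the upper-tail concentration for $Y$ matching the exponential-in-$\mu$ probability bound stated in the lemma. Markov's inequality only gives the constant-level estimate $\Pr(Y > K\mu) \leq O(1/K)$; and Harris's inequality (Theorem~\ref{thm:fkg}) does not rescue the situation, because $X$ and $Y$ are both increasing in $\Gnp$, so conditioning on $X$ being large makes $Y$ stochastically \emph{larger}, not smaller. Standard polynomial upper tails of Kim--Vu type yield only sub-exponential bounds of the form $\exp(-c\mu^{\alpha})$ for some $\alpha < 1$, which is strictly weaker than what is needed. I would expect the authors to circumvent this either via a Talagrand-style certifiable-Lipschitz argument, exploiting that the event $Y > K\mu$ can always be witnessed by a small set of edges, or via a two-round exposure $\Gnp = G_1 \cup G_2$ in which the large collection of $H$-copies is produced using $G_1$ and edge-disjointness is verified independently via $G_2$.
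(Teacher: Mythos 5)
Your proposal correctly identifies the structure of the paper's argument (density lemma to get $\Omega(n^{v_H})$ copies of $H$ in $F$, Janson for the lower tail of the number of surviving copies, Tur\'an on the conflict graph to extract an edge-disjoint subfamily) and correctly locates the apparent obstacle: Markov gives only a constant-probability bound on the number of edges $Y$ of the conflict graph. But your dismissal of Harris's inequality is where you go wrong, and it causes you to miss the whole point of the lemma (and, in fact, of the paper's Section~1.4). You argue that ``conditioning on $X$ being large makes $Y$ stochastically larger''---which is true but irrelevant, because the paper does not condition on $X$ being large. It conditions on the \emph{decreasing} event $\cD = \{\,e(\tG)\leq \Delta\,\}$, which by Markov has probability at least $1/2$. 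The crucial observation is that the \emph{failure} event $\neg\cA$ = ``$F\cap K$ does not contain $b\,n^{v_H}p^{e_H}$ edge-disjoint copies of $H$'' is itself decreasing, so Harris gives
\[
\Pr(\Gnp\in\neg\cA)\;\leq\;\Pr(\Gnp\in\neg\cA\mid\Gnp\in\cD)\;\leq\;2\,\Pr(\Gnp\in\neg\cA\cap\cD),
\]
and on $\cD$ the Tur\'an bound shows that $\neg\cA$ forces $\neg\cE$ (fewer than $\mu/2$ copies of $H$ survive), whose probability is exponentially small by Janson. The constant loss from $\Pr(\cD)\geq1/2$ costs only a factor $2$, which is exactly where the ``$+1$'' in the exponent of $2^{-bn^{v_H}p^{e_H}+1}$ comes from. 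No exponential upper-tail bound on $Y$ is needed at all; the ``obstacle'' you describe is a mirage. Your proposed alternatives (Talagrand-style arguments, or a further two-round exposure within this lemma) are both more complicated than what is needed and are not what the paper does.
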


Note that for $p=o(n^{-1/m^*(H)})$ we have $n^{v_H}p^{e_H} = o(n^2 p)$ (recall Remark~\ref{rem:least-frequent}), so the error probability in Lemma~\ref{lemma:disjoint} is not as high as it may look like at first glance.

\begin{proof}[Proof of Lemma~\ref{lemma:disjoint}]
Let
\begin{equation} \label{eq:def_rhoprime-disjoint}
\rho:=\rho(v_H,d)\,, \qquad n_0:=n_0(v_H,d)\,, \qquad  c_0:=c_0(v_H,d) \leq 1
\end{equation}
denote the constants obtained by applying Lemma~\ref{lemma:rho-d-dense} with $\ell:= v_H$ and $d$. We shall prove Lemma~\ref{lemma:disjoint} for $\rho$ and $n_0$ as defined in \eqref{eq:def_rhoprime-disjoint} and
\begin{equation} \label{eq:def_b-disjoint}
  b:=\frac{c_0^2}{16^{v_H^2+1}}\,. 
\end{equation}

Let $F$ be a $(\rho,d)$-dense graph on $n\geq n_0$ vertices be given, and set
\begin{equation} 
 \cA:=\left\{K\seq K_n \bigmid \parbox{0.55\displaywidth}{$F\cap K$ contains a family of at least $bn^{v_H}p^{e_H}$ pairwise edge-disjoint copies of $H$}\right\} \,.
\end{equation}
Note that $\cA$ is increasing.
Our goal is to bound $\Pr(\Gnp\in\neg\cA)$ from above.

Denote by $\cH$ the family of all copies of $H$ in $F$.
  By our choice of constants in~\eqref{eq:def_rhoprime-disjoint}, the assumption that $F$ is $(\rho,
  d)$-dense yields with  Lemma~\ref{lemma:rho-d-dense} that there are at least $c_0
  n^{v_H}$ complete graphs of order $v_H$ in $F$. In
  particular, we have
\begin{equation} \label{eq:cH-disjoint}
 |\cH|\geq c_0 n^{v_H}\,.
\end{equation}

We will apply Janson's inequality (Theorem~\ref{thm:janson}) to the
family $\cH$.  For any graph $K\seq K_n$, we let $\cH(K)\seq \cH$ 
denote the family of all copies of $H$ in $F \cap K$. We obtain for $\mu$ as defined in Theorem~\ref{thm:janson} that
\begin{equation} \label{eq:mu-disjoint}
 \mu = \EE[|\cH(\Gnp)|] = |\cH|\cdot p^{e_H}\geBy{eq:cH-disjoint} c_0 n^{v_H}p^{e_H}\,.
\end{equation}

Let $\cS$ be the family of all pairwise nonisomorphic graphs that are
unions of two copies of~$H$ that intersect in at least one edge.  For
a fixed graph $S\in\cS$, let $J$ denote the intersection of the two
copies of $H$. Owing to the assumption that $p\leq n^{-1/m^*(H)}$, we obtain for any nonempty subgraph $J\seq H$ that
\begin{equation} \label{eq:H-rarest-disjoint}
 n^{v_H-v_J}p^{e_H-e_J} 
 \leq n^{v_H-v_J-(e_H-e_J)/m^*(H)}
 \leBy{eq:def_mstar}1\,, 
\end{equation}
which implies that for any $S\in \cS$ we have
\begin{equation} \label{eq:exp_S-disjoint}
  n^{v_S}p^{e_S} =  n^{2v_H-v_J}p^{2e_H-e_J} \leBy{eq:H-rarest-disjoint} n^{v_H}p^{e_H}\,.
\end{equation}
As there are at most $n^{v_S}$ copies of $S$ in $F$, and since
each such copy corresponds to at most $((v_S)_{v_H})^2\leq (2v_H)^{2v_H} \leq 4^{v_H^2}$
pairs $(H_i, H_j)\in\cH\times \cH$, $i\neq j$ with $H_i\cup H_j\cong S$, we obtain for $\Delta$ as defined in Theorem~\ref{thm:janson} that
 \begin{equation} \label{eq:Delta-disjoint}
\begin{split}
  \Delta &= \sum_{S\in\cS} \sum_{\substack{(H_i, H_j)\in \cH\times \cH\\ H_i\cup H_j\cong S}} p^{e_S}\\
     &\leq 4^{v_H^2} \sum_{S\in\cS}  n^{v_S}p^{e_S}\\
     &\leBy{eq:exp_S-disjoint} 16^{v_H^2}\, n^{v_H}p^{e_H}\,,
\end{split}  
\end{equation}
where in the last step we bounded $|\cS|$ by the number of graphs on at most $2v_H$ vertices, which in turn is bounded by $\sum_{i=2}^{2v_H} 2^{\binom{i}{2}}\leq 2^{\binom{2v_H}{2}+1}\leq 2^{2v_H^2}.$

 Consider now the property
\begin{equation} \label{eq:E-disjoint}
  \cE:=\Big\{K\seq K_n\,\Big|\,  |\cH(K)|\geq \mu/2\Big\}\,.
\end{equation}
By Janson's inequality (Theorem~\ref{thm:janson}) we have 
 \begin{multline} \label{eq:PrE-disjoint}
\Pr(\Gnp\in\neg\cE)
  \leq\exp\(-\frac{\mu^2}{8(\mu+\Delta)}\) \leq \exp\(-\frac{1}{16}\cdot\min \Big\{\mu, \frac{\mu^2}{\Delta}\Big\}\)\\
  \leByM{\eqref{eq:mu-disjoint},\,\eqref{eq:Delta-disjoint}}\quad \exp\(-\frac{1}{16}
  \min\Big\{c_0, \frac{c_0^2}{16^{v_H^2} }\Big\}n^{v_H}p^{e_H}\)
 \leBy{eq:def_b-disjoint}\quad
 \exp(-bn^{v_H}p^{e_H})\,, 
\end{multline}
where in the second to last step we also used that $c_0\leq 1$ (see~\eqref{eq:def_rhoprime-disjoint}). 

For a given graph $K\seq K_n$, consider the auxiliary graph $\tG=\tG(K)$ on the vertex set
$V(\tG)=\cH(K)$, in which two vertices are connected by an edge if
and only if those two copies of $H$ are not edge-disjoint.  

Note that 
$$
 \EE[e(\tG(\Gnp))]=\Delta/2
$$
for $\Delta$ as in~\eqref{eq:Delta-disjoint} (the factor~$1/2$ is due to the
fact that the sum in~\eqref{eq:Delta-disjoint} is over \emph{ordered} pairs). Thus for the property
 \begin{equation} \label{eq:D-disjoint}
  \cD:=\Big\{K\seq K_n\,\Big|\,e(\tG(K))\leq \Delta\Big\}\,,
 \end{equation}
we obtain with Markov's inequality that
\begin{equation} \label{eq:PrD-disjoint}
  \Pr(\Gnp\in\cD)\geq 1/2\,.
\end{equation}

By definition of the auxiliary graph $\tG=\tG(K)$, 
any independent set in $\tG$ corresponds to a family $\tcH\seq
\cH(K)$ of pairwise edge-disjoint copies of $H$ in $F\cap K$.
Thus our definitions of $\cD$ and $\cE$ imply with Tur\'{a}n's Theorem (Theorem~\ref{thm:turan}) that any graph $K\in \cD\cap\cE$ contains a subfamily $\tcH\seq \cH(K)$ of pairwise
edge-disjoint copies of $H$ of size at least
\begin{multline} \label{eq:tcH-disjoint}
|\tcH| \geByM{\text{Thm.~\ref{thm:turan}}} \frac{v(\tG)^2}{v(\tG) + 2e(\tG)} \geq  \frac{1}{2}\cdot\min  \Big\{ v(\tG), \frac{v(\tG)^2}{2e(\tG)}\Big\}
 \\ \geByM{\eqref{eq:E-disjoint},\eqref{eq:D-disjoint}}  \frac{1}{16}\min\Big\{4\mu, \frac{\mu^2}{\Delta}\Big\}\geq b  n^{v_H}p^{e_H}\,,
\end{multline} 
where the last inequality follows analogously to \eqref{eq:PrE-disjoint}.  In other words, we have just shown that
\begin{equation*}
    \cD \cap  \cE \quad \seq \quad  \cA
\end{equation*}
or, equivalently,
\begin{equation} \label{eq:ahd-disjoint}
   \neg\cA \cap  \cD  \quad \seq \quad \neg\cE\,.
\end{equation}

Since $\neg\cA$ and $\cD$ are both decreasing, we obtain with the Harris inequality (Theorem~\ref{thm:fkg}) that
\begin{equation*}
\begin{split}
\Pr(\Gnp\in\neg\cA)&\leByM{\text{Thm.~\ref{thm:fkg}}} \Pr(\Gnp\in\neg\cA \,|\,\Gnp\in\cD)\\
&\leBy{eq:PrD-disjoint} 2 \Pr(\Gnp\in\neg \cA\cap \cD) \leBy{eq:ahd-disjoint} 2\Pr(\Gnp\in\neg\cE)\\
 &\leBy{eq:PrE-disjoint}  2\exp(-bn^{v_H}p^{e_H})\leq 2^{1-bn^{v_H}p^{e_H}} \,,
\end{split}
\end{equation*}
as claimed.
\end{proof}

\section{Proof of Theorem~\ref{thm:main result}}
\label{sec:proof}

As already mentioned, our proof of Theorem~\ref{thm:main result} proceeds by induction on $e(G)$, whereas~$H$ is considered fixed. In order for this induction to work, we will prove the following stronger statement. Recall that we introduced the set of $H$-edges $E_H(\Gnp)$ and the notion of $H$-covered copies in Definition~\ref{def:H-covered}. 

\begin{lemma}[Main lemma]
  \label{lemma:induction}
  Let $H$ be a nonempty graph with $v_H\geq 3$.
  For any nonempty graph $G$ satisfying $m_2(G)< x^*(H)$ there exist positive constants
  $a$, $b$, $C$, and $n_0$ such that for $n\geq n_0$ and
  \begin{equation}  \label{eq:p}
   Cn^{-1/m_2(G,H)} \leq p\leq
    n^{-1/m^*(H)}\,,
   \end{equation}
  with probability at least $1-2^{-bn^{v_H}p^{e_H}}$, every
  red-blue-coloring of $E_H(G_{n,p})$ that does not contain a blue copy of $H$
  contains at least $an^{v_G}(n^{v_H-2}p^{e_H})^{e_G}$ many $H$-covered red copies of $G$.
\end{lemma}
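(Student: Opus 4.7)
I would prove Lemma~\ref{lemma:induction} by induction on $e(G)$, keeping $H$ fixed throughout. Write $\tau := n^{v_H-2}p^{e_H}$; the target is $\Omega(n^{v_G}\tau^{e_G})$ red $H$-covered copies of $G$, with failure probability at most $2^{-\Omega(n^{v_H}p^{e_H})} = 2^{-\Omega(n^{2}\tau)}$.

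\emph{Base case.} When $e_G = 1$, the graph $G$ is a single edge (plus possibly isolated vertices, which contribute only a trivial $\Theta(n^{v_G-2})$ attachment factor to the counting), and an $H$-covered copy of $G$ is simply an edge lying in a copy of $H$. Applying Lemma~\ref{lemma:disjoint} with $F = K_n$, which is trivially $(\rho,d)$-dense for suitable positive constants $\rho,d$, yields with probability at least $1 - 2^{1-bn^{v_H}p^{e_H}}$ a family of at least $bn^{v_H}p^{e_H}$ pairwise edge-disjoint copies of $H$ in $\Gnp$. The no-blue-$H$ assumption forces each such copy to contain a red edge, and edge-disjointness makes these red edges distinct, yielding the claimed count.

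\emph{Inductive step.} Suppose $e_G \geq 2$; pick an edge $g$ of $G$ and set $G_- := G - g$, retaining all vertices of $G$. Monotonicity of $m_2$ and $d_2(\cdot,H)$ yields $m_2(G_-) \leq m_2(G) < x^*(H)$ and $m_2(G_-,H) \leq m_2(G,H)$, so the inductive hypothesis applies to $G_-$ within the same $p$-range, producing (with failure probability at most $2^{-b_- n^{v_H}p^{e_H}}$) at least $A_- := a_- n^{v_G}\tau^{e_G-1}$ red $H$-covered copies of $G_-$ under any valid coloring of $E_H(\Gnp)$. Given such a copy with central $G'_-$, $H$-cover $G_-^H \subseteq \Gnp$, and completing vertex pair~$g'$, we must (i) find a copy $H'$ of $H$ through $g'$ in $\Gnp$ edge-disjoint from $G_-^H$ and (ii) ensure that $g'$ is colored red. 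For (i), Lemma~\ref{lemma:goalJ} supplies precisely the density bound $v_J - e_J/m_2(G,H) \geq 2 - L(G_-^H)$ on subgraphs $J \subseteq G_-^H$ containing the two vertices of $g'$ needed to invoke Janson's inequality (Theorem~\ref{thm:janson}) and a Tur\'{a}n-style edge-disjointness argument mirroring Lemma~\ref{lemma:disjoint}: with probability $\geq 1 - 2^{-\Omega(\tau)}$, there are $\Omega(\tau)$ such $H'$ in $\Gnp\setminus E(G_-^H)$.

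The \emph{main obstacle} is step (ii): guaranteeing that at least a constant fraction of the $A_-$ red $H$-covered copies of $G_-$ have their completing edge $g'$ actually colored red (so the extension yields a red copy of $G$ rather than a red-red-$\cdots$-blue copy). Here I would deploy the conditioning strategy described in Section~\ref{sec:deletion}: condition on an event of constant probability that caps the number of copies of an auxiliary graph $T$ (encoding second-moment information for the extension count, morally the union of two elements of $\cF(G,H)$ sharing their central completing edge) at a constant multiple of its mean. By the Harris inequality (Theorem~\ref{thm:fkg}), all monotone high-probability conclusions of the inductive hypothesis and of the Janson concentration in step (i)---which arise as upper bounds on decreasing events---remain valid in the conditional space. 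Inside this conditional space, a double-counting argument bounds the aggregate number of completing pairs $g'$ of red $H$-covered copies of $G_-$ that fail to be red $H$-edges by, say, $A_-/2$ (using that each such $g'$ with many $H$-copies through it would generate too many collision copies of $T$), leaving $\Omega(A_-)$ red $H$-covered copies of $G_-$ with red completions. Each extends in $\Omega(\tau)$ ways by (i), yielding $\Omega(A_-\tau) = \Omega(n^{v_G}\tau^{e_G})$ red $H$-covered copies of $G$ in total, and a final union bound over the inductive, Janson, and conditioning error events delivers the claimed $2^{-b n^{v_H}p^{e_H}}$ failure bound.
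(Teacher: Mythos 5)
Your overall skeleton — induction on $e(G)$, deleting an edge to form $G_-$, invoking Lemma~\ref{lemma:goalJ}, Janson plus Tur\'{a}n, and using Harris to condition on a bounded copy count — matches the paper well, but there is a genuine gap in the inductive step that the paper's \emph{two-round exposure} is specifically designed to close and that your single-round argument does not resolve.

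The problem is in your step~(ii). You apply the inductive hypothesis to $\Gnp$ directly, obtain $A_-$ red $H$-covered copies of $G_-$ together with their completing pairs $g'$, and then want to argue that a constant fraction of those $g'$ are red $H$-edges. But the adversary chooses the coloring of \emph{all} of $E_H(\Gnp)$ simultaneously; in particular it sees the completing pairs and may color every single such $g'$ blue (or $g'$ may not even be an edge of $\Gnp$, or not an $H$-edge). The constraint ``no blue copy of $H$'' does \emph{not} force any individual $g'$ to be red, because a copy $H'$ of $H$ through $g'$ can satisfy the constraint by having some \emph{other} edge of $H'$ red. Your proposed double-counting/conditioning on $T$-copies is a second-moment device: it controls how spread out the completing pairs are (this is exactly how the paper proves the base graph $\Gamma$ is $(\rho,d)$-dense via Jensen), but it says nothing about how the adversary colors those pairs. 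The only mechanism available for forcing a red edge is to exhibit an entire copy of $H$ \emph{all of whose edges are completing pairs} and then use edge-disjointness plus ``no blue $H$.'' This is precisely what the paper achieves by splitting $\Gnp = G_{n,p_1}\cup G_{n,p_2}$: the first round (plus an adversarial coloring $h$ of $E_H(G_{n,p_1})$) produces a $(\rho,d)$-dense base graph $\Gamma(K,h)$ of good completing pairs; the second round, independent of $h$, plants $\Omega(n^{v_H}p^{e_H})$ edge-disjoint copies of $H$ inside $\Gamma(K,h)$ via Lemma~\ref{lemma:disjoint}; each such copy must contain a red edge, and each red edge of $\Gamma$ yields $z$ red copies of $G$. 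Crucially, the failure probability of the second round must beat a union bound over all colorings $h$ of $E_H(G_{n,p_1})$, and this works only because $p_1 = \alpha p$ is a small fraction of $p$ \emph{and} one conditions (via Harris, the event $\cC$) on $|E_H(G_{n,p_1})|$ being at most twice its mean — a second, essential use of the conditioning trick that your proposal omits.

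Two smaller points. First, your base case only covers $e_G=1$, but the paper's base case is the entire class of matchings; this matters because the inductive step (both in the paper and in any variant that invokes Lemma~\ref{lemma:goalJ}) requires $G$ not to be a matching, so a two-edge matching would have no valid base to reduce to under your scheme. Second, the quantity you call $T$ is morally the right auxiliary graph, but its role in the paper is confined to establishing density of $\Gamma$ (Facts~\ref{fact:PrDV} and~\ref{fact:jensen}), not to forcing colors; conflating the two roles is where the argument slips.
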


 Note that, because of~\eqref{eq:evGH}, the number of
$H$-covered red copies of $G$ guaranteed by Lemma~\ref{lemma:induction}
is of the same order of magnitude as the expected number of copies of
graphs from $\cF(G,H)$ (as defined in Definition~\ref{def:FGH}) in
$\Gnp$.

\begin{remark}
The statement of Lemma~\ref{lemma:induction} is void if $H$ is not strictly balanced \wrt $d_2(G,\cdot)$ (recall Lemma~\ref{lemma:equivalence}). On the other hand, for $H$ strictly balanced \wrt $d_2(G,\cdot)$, there is $p$ as in~\eqref{eq:p}, and -- crucially for our proof of Lemma~\ref{lemma:induction} -- we may apply Lemma~\ref{lemma:disjoint} for such~$p$. 
\end{remark}

\begin{remark}
For the two-color case studied here, it would be sufficient to prove the statement of Lemma~\ref{lemma:induction} with an error probability of $2^{-\Theta(n)}$ instead of $2^{-\Theta(n^{v_H}p^{e_H})}$ (see~\eqref{eq:rho-d-dense} below). However, our arguments yield the latter for free, and this is also what would be needed to extend our inductive approach to more than two colors.
\end{remark}

Lemma~\ref{lemma:induction} implies Theorem~\ref{thm:main result} as follows. 

\begin{proof}[Proof of Theorem~\ref{thm:main result}] Owing to
  Lemma~\ref{lemma:equivalence}, $G$ and $H$ as in Theorem~\ref{thm:main
    result} satisfy the hypothesis 
  of
  Lemma~\ref{lemma:induction}. We will prove Theorem~\ref{thm:main
    result} for the constant $C=C(G,H)$ guaranteed by
  Lemma~\ref{lemma:induction}.

By monotonicity it suffices to prove the theorem for $p=p(n):=Cn^{-1/m_2(G,H)}$. Again due to Lemma~\ref{lemma:equivalence}, this is smaller than $n^{-1/m^*(H)}$ for $n$ large enough, and thus Lemma~\ref{lemma:induction} is applicable for this $p=p(n)$. Clearly, if the event in Lemma~\ref{lemma:induction} holds then we have in particular that $\Gnp \rightarrow(G,H)$. Furthermore, due to~\eqref{eq:exponent-1} and the assumption that $G$ is not a matching, $n^{v_H}p^{e_H}$ is a growing function of $n$.  Hence the probability stated in Lemma~\ref{lemma:induction} is indeed $1-o(1)$, and Theorem~\ref{thm:main result} is proved.
\end{proof}

\subsection{Proof of Lemma~\ref{lemma:induction}}

It remains to prove Lemma~\ref{lemma:induction}, which we will do in the remainder of this section. Our main proof hinges on two fairly involved statements (Claim~\ref{clm:first-round} and Claim~\ref{clm:second-round} below), whose proofs are deferred to Section~\ref{sec:first-round} and Section~\ref{sec:second-round}, respectively.

As already mentioned, we proceed by induction on $e(G)$. Our induction base is the case where $G$ is a matching.

\begin{proof}[Proof of Lemma~\ref{lemma:induction}: Induction base --
  $G$ is a matching]
W.l.o.g.\ we may assume that $G$ contains no isolated vertices, i.e., that $v_G=2e_G$. 
Let
\begin{equation} \label{eq:def_rhoprime-basecase}
\rho:=\rho(H,1)\,, \qquad n':=n_0(H,1)\,, \qquad  b':=b(H,1)
\end{equation}
denote the constants obtained by applying Lemma~\ref{lemma:disjoint} for $H$ and $d:= 1$.
We shall prove Lemma~\ref{lemma:induction} for 
\begin{align}
  a=a(G,H)&:= \left(\frac{b'}{(2e_G)^{v_H}}\right)^{e_G}\,, \label{eq:def_a-base}\\ 
  b=b(G,H)&:=\frac{b'}{(\log_2 e_G + 2)\cdot  (2e_G)^{v_H}}\,,  \label{eq:def_b-base}\\
  C=C(G,H)&:=b^{-1/e_H} \label{eq:def_C-base}\,,\\
  n_0=n_0(G,H)&:=(2e_G) \cdot n'
  \,. \label{eq:def_n0-base} 
\end{align}
Note that for any $n\geq 1$ we have 
\begin{equation} \label{eq:b-large-enough-base}
1 = n^{2-1/m_2(G)}\leByM{\eqref{eq:exponent-1}, \eqref{eq:def_C-base}} b C^{e_H} n^{v_H - e_H/m_2(G,H)} \leBy{eq:p} bn^{v_H}p^{e_H}  \,.
\end{equation}

Fix $e_G$ pairwise disjoint sets $V_1, \dots, V_{e_G} \seq [n]$ of size \begin{equation} \label{eq:tn-base}
  \tn:=\lfloor n/e_G \rfloor\geq n/(2e_G)
\end{equation} each, and note that the graphs $\Gnp[V_i]$ induced by $\Gnp$ on these sets behave like independent random graphs $G_{\tn, p}$, where $\tn\geq n'$ due to our choice of $n_0$ in~\eqref{eq:def_n0-base}. 

Due to our choice of constants in~\eqref{eq:def_rhoprime-basecase} and observing that the complete graph $K_\tn$ is $(\rho, 1)$-dense,
we obtain with Lemma~\ref{lemma:disjoint} and the union bound that for $n\geq n_0$, with probability at least
\begin{multline*}
 1-e_G\cdot 2^{1-b'\tn^{v_H}p^{e_H}} \geq 1-2^{\log_2 e_G + 1 - (b'/(2e_G)^{v_H}) n^{v_H}p^{e_H}}\\
 \eqBy{eq:def_b-base}
 1-2^{\log_2 e_G + 1 - (\log_2 e_G + 2)bn^{v_H}p^{e_H}} 
 \geBy{eq:b-large-enough-base} 1-2^{-bn^{v_H}p^{e_H}}\,,
\end{multline*}
each of the graphs $\Gnp[V_i]$ contains a family of at least
$b'\tn^{v_H}p^{e_H}$ pairwise edge-disjoint copies of $H$. To avoid creating a blue copy of $H$, one edge from each of these copies needs to be colored red. Thus in every red-blue-coloring of $E_H(\Gnp)$ there is either a blue copy of~$H$ or we can obtain at least $$\(b'\tn^{v_H}p^{e_H}\)^{e_G}\geByM{\eqref{eq:def_a-base},\,\eqref{eq:tn-base}} an^{2e_G}  (n^{v_H-2}p^{e_H})^{e_G} =  an^{v_G}  (n^{v_H-2}p^{e_H})^{e_G}$$ many red matchings by picking exactly one $H$-covered red edge from each of the graphs $\Gnp[V_i]$, $1\leq i\leq e_G$. By definition, these red matchings are $H$-covered red copies of~$G$.
\end{proof}

Before giving the proof of the induction step, let us give an informal
outline of the key proof ideas. As already mentioned, our approach can be seen as a refinement of the proof for the symmetric case given by R\"odl and Ruci\'{n}ski in~\cite{MR1276825}. We will generate $\Gnp$ in two rounds,
i.e., as the union of two independent binomial random graphs
$G_{n,p_1}$ and $G_{n,p_2}$ on the same vertex set. Let $G_-$ denote a
fixed subgraph of $G$ with $e(G)-1$ edges and $v(G)$ vertices. By the
induction hypothesis, with high probability every coloring of the
$H$-edges of the first round that does not contain a blue copy of $H$
contains `many' $H$-covered red copies of $G_-$. Each of those induces
a vertex pair that will complete a red copy of $G$ if it is sampled as
an edge of the second round and is colored red. In our argument we will consider vertex pairs that complete not only one, but `many' red copies of $G_-$ to copies of~$G$. We will call the graph spanned by these edges the \emph{base graph} $\Gamma(h)$ of a given coloring~$h$ of $E_H(G_{n,p_1})$, the $H$-edges of the first round.  Our main goal when analyzing the first round is to show that, with suitably high probability, the base graph $\Gamma(h)$ is $(\rho,d)$-dense  for every coloring~$h$ of $E_H(G_{n,p_1})$ (for appropriately chosen parameters $\rho$ and $d$). Once this is shown, we may apply Lemma~\ref{lemma:disjoint} to find `many' pairwise edge-disjoint copies of $H$ in $\Gamma(h)\cap G_{n,p_2}$, the random subgraph of $\Gamma(h)$ spanned by the edges of the second round. In order to avoid creating a blue copy of $H$, one edge from each such copy needs to be colored red, which by definition of the base graph $\Gamma(h)$ creates `many' $H$-covered red copies of $G$.

For this approach to work, the arguments of the second round need to work for all possible colorings of the $H$-edges of the first round \emph{simultaneously}. In order to infer this with the union bound, we need that for a \emph{fixed} coloring $h$ of the first round, the second round fails with probability exponentially small in the number of $H$-edges. Here it is crucial that we only consider colorings of the $H$-edges of the first round, as the error probability for the second round is not small enough to beat the number of colorings of \emph{all} edges of the first round!

\begin{proof}[Proof of Lemma~\ref{lemma:induction}: Induction step -- $G$ is not a matching]
We denote by $G_-$ an arbitrary fixed subgraph of $G$ with $e(G)-1$ edges and $v(G)$ vertices. Note that we imposed no balancedness restricion on $G$, and hence both~$G$ and $G_-$ may be disconnected and even contain isolated vertices.

We start by fixing all constants needed in the proof. Throughout the following, by $a(G_-, H)$ etc we denote the constants guaranteed inductively by Lemma~\ref{lemma:induction}.

Let
\begin{equation}
  \label{eq:def_ell}
  \ell:=2\big(v_G+(e_G-1)(v_H-2)\big)\,,
\end{equation}
and set
\begin{equation} \label{eq:def_dprime}
d:= \frac{a(G_-,H)^2}{12\cdot 2^{\ell^2}\cdot\ell^{2v_G}}\,.
\end{equation}

Let
\begin{equation} \label{eq:def_rhoprime}
\rho:=\rho(H,d)\,, \qquad n':=n_0(H,d)\,, \qquad  b':=b(H,d)
\end{equation}
denote the constants obtained by applying Lemma~\ref{lemma:disjoint} for $H$ and $d$.
Set 
\begin{equation} \label{eq:def_cGamma}
c_\Gamma:=\rho^{(v_G-2)+(e_G-1)(v_H-2)} \cdot a(G_-, H)
\end{equation}
and
\begin{equation} \label{eq:def_C1}
C_1:=\max\left\{C(G_-, H), \(\frac{3}{c_\Gamma} \)^{\frac{1}{e_H(e_G-1)}}\right\}.
\end{equation}

Fix $\alpha>0$ small enough such that
\begin{equation} \label{eq:def_alpha}
 \alpha^{e_H}\leq\frac{b'}{16e_H} \quad \text{and} \quad 
 (1-\alpha)^{e_H}\geq 1/2\,,
\end{equation}
and set
\begin{align}
  b_1&:=\frac12\,b(G_-,H)\rho^{v_H} \alpha ^{e_H}
  \,,  \label{eq:def_bOne}\\ 
  b_2&:= b'/4\,. \label{eq:def_bTwo}
\end{align}

We shall prove Lemma~\ref{lemma:induction} for 
\begin{align}
  a=a(G,H)&:= (b'/2)\cdot c_{\Gamma} \cdot \alpha^{(e_G-1)e_H}\,, \label{eq:def_a}\\ 
  b=b(G,H)&:=\frac12\min\{b_1, b_2/2\}\,,  \label{eq:def_b}\\
  C=C(G,H)&:=\max\left\{\frac{C_1}{\alpha \cdot\rho^{1/m_2(G,H)}}, b^{-1/e_H}\right\} \label{eq:def_C}\,,\\
  n_0=n_0(G,H)&:=\max\left\{\frac{n_0(G_-, H)}{\rho}, n'\right\}
  \,. \label{eq:def_n0} 
\end{align}

Let $n\geq n_0$ and $p$ as in \eqref{eq:p}  be given, and set
\begin{equation} \label{eq:def_pi}
p_1:= \alpha p\,, \qquad p_2:= \frac{p-p_1}{1-p_1}\,.
\end{equation}
Note that
\begin{equation} \label{eq:p2}
(1-\alpha)p \leq  p_2\leq p \,.
\end{equation}

Throughout the proof we will identify $G_{n,p}$ with the union of two independent random graphs $G_{n,p_1}$ and $G_{n,p_2}$ on the same vertex set $[n]$. Note that indeed each edge
of $K_n$ is included in $G_{n,p_1}\cup G_{n,p_2}$ with probability
$$
1-(1-p_1)(1-p_2) \eqBy{eq:def_pi} p
$$
independently. 

As $G$ is not a matching, we have $m_2(G)\geq 1$, and consequently for any $n\geq 1$ that
\begin{equation} \label{eq:b-large-enough}
n \leq n^{2-1/m_2(G)}\leByM{\eqref{eq:exponent-1}, \eqref{eq:def_C}} b C^{e_H} n^{v_H - e_H/m_2(G,H)} \leBy{eq:p} bn^{v_H}p^{e_H}  \,.
\end{equation}

Next we define a number of graph properties to formalize the ideas outlined above. Throughout, $\cA, \cB, \cC$ etc.\ denote `good' properties, i.e., properties that are desirable in our proofs.

Let 
\begin{equation} \label{eq:A}
 \cA:=\left\{K\seq K_n \bigmid \parbox{0.55\displaywidth}{Every red-blue-coloring of $E_H(K)$ that does not contain a blue copy of $H$ contains at least $an^{v_G}(n^{v_H-2}p^{e_H})^{e_G}$ many $H$-covered red copies of
$G$}\right\} \,,
\end{equation}
 and note that $\cA$ is an increasing graph property. Our goal is to bound
$\Pr(\Gnp\in\neg\cA)$ from above.

For any graph $K\seq K_n$ (representing a fixed outcome of $G_{n,p_1}$) and any red-blue coloring~$h$ of $E_H(K)$, set
\begin{equation} \label{eq:AKh}
 \cA_{K,h}:=\left\{K'\seq K_n \bigmid \parbox{0.55\displaywidth}{Every extension of $h$ to $E_H(K \cup K')$ that does not contain a blue copy of $H$ contains at least $an^{v_G}(n^{v_H-2}p^{e_H})^{e_G}$ many $H$-covered red copies of $G$} \right\} \,.
\end{equation} 
Note that $\cA_{K,h}$ is increasing for any fixed $K$ and $h$.

 Let
\begin{equation} \label{eq:def_z}
 z:=c_\Gamma \cdot n^{v_G-2}\(n^{v_H-2}p_1^{e_H}\)^{e_G-1}\,.
\end{equation}
where $c_\Gamma$ is defined in~\eqref{eq:def_cGamma}. For any graph $K\seq K_n$ (again representing a fixed outcome of $G_{n,p_1}$) and any red-blue-coloring $h$ of $E_H(K)$, set
\begin{equation} \label{eq:def_base}
 \Gamma(K,h):=\left\{\textstyle e\in \binom{[n]}{2}\bigmid \parbox{0.55\displaywidth}{$e$
    completes at least $z$ many $H$-covered copies of
    $G_-$ in 
    $E_H(K)$ that are colored red in $h$ to copies of
    $G$}\right\} \,.
\end{equation}
We will refer to the graph~$([n],\Gamma(K,h))\seq K_n$ as the \textit{base graph}
determined by the coloring~$h$. Further, let 
\begin{equation} \label{eq:B}
 \cB:=\left\{K\seq K_n \bigmid \parbox{0.55\displaywidth}{For every red-blue-coloring $h$ of $E_H(K)$ that does not contain a blue copy of $H$, the base graph $\Gamma(K,h)$ is $(\rho, d)$-dense}\right\} \,,
\end{equation}
where $d$ and $\rho$ are defined in~\eqref{eq:def_dprime} and ~\eqref{eq:def_rhoprime}. Note that $\cB$ is an increasing graph property. Finally, let \begin{equation} \label{eq:C}
\cC:=\Big\{K\seq K_n\,\Big|\, |E_H(K)|\leq 2e_H \cdot n^{v_H}p_1^{e_H} \Big\}\,,
\end{equation}
and note that $\cC$ is a decreasing graph property.

We will prove the following two claims.

\begin{claim} \label{clm:first-round} We have
$$\Pr(\GnpOne \in \neg\cB)\leq 2^{-b_1\, n^{v_H}p^{e_H}}\,.$$
\end{claim}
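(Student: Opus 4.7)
The plan is to apply the induction hypothesis for $G_-$ (Lemma~\ref{lemma:induction}, available since $m_2(G_-)\leq m_2(G)< x^*(H)$) to the restricted first-round random graph $G_{n,p_1}[V]$ for every $V\subseteq[n]$ with $|V|\geq\rho n$. Our choice of constants in~\eqref{eq:def_C1}--\eqref{eq:def_n0} ensures that $p_1=\alpha p$ lies in the permissible probability range of Lemma~\ref{lemma:induction} for $G_-$ on vertex sets of size at least $\rho n$. For each such $V$, the induction yields, with failure probability at most $2^{-b(G_-,H)|V|^{v_H}p_1^{e_H}}$, that every coloring of $E_H(G_{n,p_1}[V])$ without a blue copy of $H$ contains at least
\[
  \mathrm{LB}(V) := a(G_-,H)\,|V|^{v_G}\bigl(|V|^{v_H-2}p_1^{e_H}\bigr)^{e_G-1}
\]
many $H$-covered red copies of $G_-$. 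A union bound over the $\leq 2^n$ choices of $V$, combined with $b(G_-,H)\rho^{v_H}\alpha^{e_H}n^{v_H}p^{e_H}\geq 2b_1 n^{v_H}p^{e_H}\geq 2n$ (which follows from~\eqref{eq:def_bOne},~\eqref{eq:def_b}, and~\eqref{eq:b-large-enough}), absorbs the factor $2^n$ into the exponent and yields the desired failure bound $2^{-b_1 n^{v_H}p^{e_H}}$.

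On the resulting good event, fix any valid coloring $h$ of $E_H(G_{n,p_1})$ and any $V$ with $|V|\geq\rho n$. The restriction $h|_V$ is itself a valid coloring of $E_H(G_{n,p_1}[V])$, so at least $\mathrm{LB}(V)$ many $H$-covered red copies of $G_-$ live inside $V$, and each such copy is also an $H$-covered red copy of $G_-$ in $E_H(G_{n,p_1})$ whose missing edge lies in $\binom{V}{2}$. The choice of $c_\Gamma$ in~\eqref{eq:def_cGamma} is tuned so that $\mathrm{LB}(V)\geq 2z\binom{|V|}{2}$ whenever $|V|\geq\rho n$: equivalently, the average over pairs $e\in\binom{V}{2}$ of the count $N(e)$ of $H$-covered red $G_-$-copies having $e$ as their missing edge is at least $2z$. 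A standard averaging then shows that the pairs $e$ with $N(e)\geq z$ collectively carry at least $z\binom{|V|}{2}$ of these incidences.

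The main obstacle is converting this into the cardinality bound $e(\Gamma(K,h)[V])\geq d\binom{n}{2}$ required for $(\rho,d)$-denseness: this needs a uniform upper bound on $M:=\max_{e}N(e)$ of order $O(z)$, and the naive deterministic estimate $M\leq O(n^{v_G-2})$ is far too weak. My plan is to decompose $N(e)=\sum_{G_-^H\in\cF(G_-,H)}N_{G_-^H}(e)$ into contributions from each $H$-covering type and to bound each summand using the structural tools of Section~\ref{sec:preliminaries}, in particular Lemma~\ref{lemma:goalJ}, possibly after conditioning on the event $\cC$ in~\eqref{eq:C} that $|E_H(G_{n,p_1})|$ is of the expected order. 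The explicit factors $2^{\ell^2}$ (bounding $|\cF(G_-,H)|$) and $\ell^{2v_G}$ (bounding combinatorial multiplicities) appearing in the definition of $d$ in~\eqref{eq:def_dprime} strongly suggest this decomposition is the intended route; the careful bookkeeping required to make this work simultaneously for every $V$ and every $h$, so that the resulting density bound can be composed with the second-round argument via the Harris inequality, is the principal technical hurdle.
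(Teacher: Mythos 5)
Your setup is right as far as it goes: restricting to sets $V$ of size $\tn=\lceil\rho n\rceil$, applying the induction hypothesis to $G_{n,p_1}[V]$, taking a union bound over $V$, and observing that the choice of $c_\Gamma$ ensures $\sum_{e\in\Gamma(K,h,V)}x_e\geq\tfrac12 a(G_-,H)\tn^{v_G}(\tn^{v_H-2}p_1^{e_H})^{e_G-1}$ (where $x_e$ is your $N(e)$). You also correctly identify that this first-moment lower bound alone does not force $|\Gamma(K,h,V)|\geq d\tn^2$. But the fix you propose --- a uniform $O(z)$ upper bound on $\max_e x_e$ --- is the wrong target: $x_e$ is a subgraph-count random variable, and upper tails of such counts are notoriously heavy; there is no deterministic bound of that order, and a high-probability bound would require exactly the kind of machinery (deletion method) that this paper is designed to avoid. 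Your suggestion to condition on $\cC$ (a bound on $|E_H(G_{n,p_1})|$) does not help either, since $\cC$ controls the number of $H$-edges, not the concentration of the $x_e$.

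What the paper actually does is bound the \emph{second moment} $\sum_{e}\binom{x_e}{2}$ rather than the maximum. Each pair of $H$-covered red $G_-$-copies sharing the same missing pair $e$ gives rise to a copy in $K[V]$ of a graph from the family $\cT$ (unions of two graphs from $\cF(G_-,H)$ whose central copies share a completing pair), so $\sum_e\binom{x_e}{2}\leq\ell^{2v_G}k_\cT(K,V)$. Lemma~\ref{lemma:goalJ} is then used to show $\EE[k_\cT(\GnpOne,V)]$ has the right order, and \emph{Markov's inequality} gives the event $\cD_V$ (that $k_\cT$ is at most twice its mean) with probability only $\geq 1/2$ --- and this suffices, because $\neg\cB_V$ and $\cD_V$ are both decreasing, so the Harris inequality yields $\Pr(\neg\cB_V)\leq 2\Pr(\neg\cB_V\cap\cD_V)$. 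Finally, the inclusion $\cA_V\cap\cD_V\subseteq\cB_V$ (hence $\neg\cB_V\cap\cD_V\subseteq\neg\cA_V$) is proved via Jensen's inequality applied to the $x_e$: a large $\sum x_e$ and a small $\sum\binom{x_e}{2}$ together force $|\Gamma(K,h,V)|$ to be large. This Markov-plus-Harris-plus-Jensen scheme is exactly the ``alternative to the deletion method'' advertised in Section~\ref{sec:deletion}; it replaces the uniform max-bound you were aiming for, which would be both unavailable and unnecessary. Incidentally, the factor $2^{\ell^2}$ in~\eqref{eq:def_dprime} bounds $|\cT|$, not $|\cF(G_-,H)|$ as you suggest.
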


\begin{claim} \label{clm:second-round}
For every $K\in \cB$ and every red-blue-coloring $h$ of $E_H(K)$, we have
$$\Pr(\GnpTwo\in\neg\cA_{K,h}) \leq 2^{-b_2\,n^{v_H}p^{e_H}}\,.$$
\end{claim}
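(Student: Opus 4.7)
The plan is to apply Lemma~\ref{lemma:disjoint} to the base graph $\Gamma := \Gamma(K,h)$ (which is $(\rho,d)$-dense by virtue of $K\in\cB$) together with the second-round random graph~$G_{n,p_2}$, producing many pairwise edge-disjoint copies of~$H$ in $\Gamma\cap G_{n,p_2}$. Each such copy forces at least one of its edges in $\Gamma\cap G_{n,p_2}$ to be coloured red in any extension of~$h$ that avoids a blue copy of~$H$, and by the definition of~$\Gamma$ every such red edge completes many $H$-covered red copies of~$G_-$ in $E_H(K)$ into $H$-covered red copies of~$G$.

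We may assume $h$ itself contains no blue copy of~$H$, since otherwise $\cA_{K,h}$ holds vacuously. Because $p_2\leq p\leq n^{-1/m^*(H)}$ and $n\geq n_0\geq n'$, Lemma~\ref{lemma:disjoint} applied to $\Gamma$ and $G_{n,p_2}$ yields, with probability at least $1-2^{1-b'\,n^{v_H}p_2^{e_H}}$, a family~$\cT$ of at least $b'\,n^{v_H}p_2^{e_H}$ pairwise edge-disjoint copies of~$H$ in $\Gamma\cap G_{n,p_2}$. Using $p_2\geq(1-\alpha)p$ and $(1-\alpha)^{e_H}\geq 1/2$ from~\eqref{eq:def_alpha}, together with $b_2=b'/4$ from~\eqref{eq:def_bTwo} and the lower bound $n^{v_H}p^{e_H}\geq n$ from~\eqref{eq:b-large-enough}, this failure probability is at most $2^{-b_2\,n^{v_H}p^{e_H}}$, matching the target.

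Conditioning on the existence of~$\cT$, fix any extension of~$h$ to $E_H(K\cup G_{n,p_2})$ that avoids a blue copy of~$H$. Each copy of~$H$ in~$\cT$ must contribute at least one red edge, and edge-disjointness gives a set $R\seq\Gamma\cap G_{n,p_2}$ of at least $|R|\geq|\cT|\geq(b'/2)\,n^{v_H}p^{e_H}$ distinct red edges. By~\eqref{eq:def_base}, each $e\in R$ completes at least $z$ many $H$-covered red copies of~$G_-$ in $E_H(K)$ into labelled copies of~$G$, and each resulting copy of~$G$ is itself $H$-covered in $K\cup G_{n,p_2}$: the $e_G-1$ edge-disjoint copies of~$H$ witnessing the $G_-$-subcopy's cover (in~$K$) together with the copy of~$H$ from~$\cT$ that contains~$e$ (in~$G_{n,p_2}$) form the required family of $e_G$ pairwise edge-disjoint copies of~$H$ covering the edges of the new copy of~$G$. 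Different $(e,G_-\text{-subcopy})$ pairs produce different labelled copies of~$G$, so using $z=c_\Gamma\alpha^{(e_G-1)e_H}\,n^{v_G-2}(n^{v_H-2}p^{e_H})^{e_G-1}$ and the identity $v_H+(e_G-1)(v_H-2)=e_G(v_H-2)+2$, the count is at least $|R|\cdot z\geq a\,n^{v_G}(n^{v_H-2}p^{e_H})^{e_G}$ for~$a$ as in~\eqref{eq:def_a}.

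The main obstacle lies in verifying the $H$-coverage claim above: the $e_G-1$ copies of~$H$ witnessing the $G_-$-cover live in~$K$ while the extra copy from~$\cT$ lies in~$G_{n,p_2}$, and these may share edges through $K\cap G_{n,p_2}$. Since $|K\cap G_{n,p_2}|$ has expectation $O(n^2 p_1 p_2)$, of strictly smaller order than the total edge count $|\cT|\cdot e_H=\Theta(n^{v_H}p^{e_H})$ in the family~$\cT$, the ``bad'' copies in~$\cT$ that touch such shared edges form a subfamily of lower order and can be discarded, losing only a constant factor absorbed into the slack built into the choice of constants.
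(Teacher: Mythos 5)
Your proof follows the paper's argument almost verbatim through the first two paragraphs: apply Lemma~\ref{lemma:disjoint} to the $(\rho,d)$-dense base graph $\Gamma(K,h)$ together with $G_{n,p_2}$, note that each of the $\geq b'n^{v_H}p_2^{e_H}$ resulting edge-disjoint $H$-copies must contribute a red edge, and multiply by $z$. The interesting difference is your last paragraph: you explicitly flag that the $e_G-1$ witnessing copies of $H$ for the $G_-$-cover live in $K$ while the Tur\'an copy lives in $G_{n,p_2}$, and that $K$ and $G_{n,p_2}$ may share edges, so the assembled family need not be pairwise edge-disjoint. The paper simply asserts that ``each such edge that is colored red creates at least $z$ many $H$-covered red copies of $G$'' without comment, so you have identified a genuine subtlety that the paper's proof elides.

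However, the repair you propose does not work. The expected overlap $\EE|K\cap G_{n,p_2}|=\Theta(n^2p_1p_2)=\Theta(n^2p^2)$ is \emph{not} in general of smaller order than $|\cT|\cdot e_H=\Theta(n^{v_H}p^{e_H})$. At $p=\Theta(n^{-1/m_2(G,H)})$, when $H$ is balanced w.r.t.\ $d_2(G,\cdot)$ one has $n^{v_H}p^{e_H}=\Theta(n^{2-1/m_2(G)})$ while $n^2p^2=\Theta(n^{2-2/m_2(G,H)})$, so your comparison requires $m_2(G,H)<2m_2(G)$. This fails already for cliques, e.g.\ $G=K_3$, $H=K_{10}$: there $m_2(K_3)=2$ but $m_2(K_3,K_{10})=45/8.5>4$, so $n^2p^2\gg n^{v_H}p^{e_H}$ and the ``bad'' copies you want to discard could be more numerous than $\cT$ itself. (A further wrinkle: you would also need a concentration bound on $|K\cap G_{n,p_2}|$ failing with probability $2^{-\Omega(n^{v_H}p^{e_H})}$, not just control of the expectation.) The relevant quantity is in fact much smaller than $|K\cap G_{n,p_2}|$: the witnessing copies of $H$ for $G_-$ consist only of $H$-edges of $K$, so it suffices to discard Tur\'an copies meeting $E_H(K)$, of which there are at most $|E_H(K)|$ by edge-disjointness. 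When $K\in\cC$ one has $|E_H(K)|\leq 2e_H n^{v_H}p_1^{e_H}$, and by the choice of $\alpha$ in~\eqref{eq:def_alpha} this is at most a small constant fraction of $|\cT|\geq b'n^{v_H}p_2^{e_H}$. Note, though, that this requires strengthening the hypothesis of the claim from $K\in\cB$ to $K\in\cB\cap\cC$ (which is all that is used in the proof of Lemma~\ref{lemma:induction}).
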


Claim~\ref{clm:first-round} and Claim~\ref{clm:second-round} imply Lemma~\ref{lemma:induction} as follows. Recall that our goal is to bound
$\Pr(\Gnp\in\neg\cA)$ from above, and that we generate $\Gnp$ as the union of two independent random graphs $\GnpOne$ and $\GnpTwo$.

As the expected number of copies of $H$ in $G_{n,p_1}$ is bounded by $n^{v_H}p_1^{e_H}$, Markov's inequality yields for $\cC$ defined in \eqref{eq:C} that
\begin{equation} \label{eq:prc}
 \Pr(\GnpOne\in\cC)\geq 1/2\,.
\end{equation}

For any graph $K'\seq K_n$ (representing a fixed outcome of $G_{n,p_2}$) we set $$\cA_{K'}:=\Big\{K\seq K_n \,\Big|\,K\cup K'\in \cA\Big\}\,,$$
where $\cA$ is defined in~\eqref{eq:A}. As $\cA$ is increasing, also the property $\cA_{K'}$ is increasing for any $K'\seq K_n$. Thus its complement is decreasing, and we obtain with the Harris inequality (Theorem~\ref{thm:fkg}) that for any $K'\seq K_n$ we have 
\begin{equation} \label{eq:FKG-conditional}
\Pr(\GnpOne\in\neg\cA_{K'})\leByM{\text{Thm.~\ref{thm:fkg}}} \Pr(\GnpOne\in\neg\cA_{K'} \,|\,\GnpOne\in\cC)\leBy{eq:prc} 2 \Pr(\GnpOne\in\neg\cA_{K'}\cap \cC)\,.
\end{equation}
Using the independence of $\GnpOne$ and $\GnpTwo$ and the law of total probability, we can infer that
\begin{equation} \label{eq:FKG}
\begin{split}
\Pr(\Gnp\in\neg\cA)& =\sum_{K'\seq K_n}\Pr[\GnpOne\in\neg\cA_{K'}]\cdot\Pr[\GnpTwo=K']
\\&\leBy{eq:FKG-conditional} 2 \(\sum_{K'\seq K_n}\Pr[\GnpOne\in\neg\cA_{K'}\cap \cC]\cdot\Pr[\GnpTwo=K']\)\\
&= 2\, \Pr(\(\Gnp\in\neg\cA\) \,\wedge\, \(\GnpOne\in\cC\))\,.
\end{split}
\end{equation}

Thus it suffices to bound the last probability. Again by the law of total probability, we have
\begin{equation} \label{eq:split-up}
\begin{split}
 &\Pr\big((\Gnp\in\neg\cA)\,\wedge\, (\GnpOne\in\cC)\big)\\
 =\, &\Pr\big((\Gnp\in\neg\cA)\, \wedge\, (\GnpOne\in\neg \cB\cap \cC)\big)\\
 &\quad+ \sum_{K\in \cB\cap \cC} \Pr\big(\Gnp\in\neg\cA\,|\,G_{n,p_1}=K\big)\Pr(G_{n,p_1}=K)\\
 \leq\, &\Pr(\GnpOne\in\neg \cB) + \max_{K\in \cB\cap \cC} \Pr(\Gnp\in\neg\cA\,|\,G_{n,p_1}=K)\,.
 \end{split}
\end{equation}

Observe that if $\GnpOne=K$ we have $\Gnp\in\neg\cA$ if and only if $\GnpTwo\in\;\bigcup_{h}\;\neg\cA_{K,h}$, where the union is over all $2^{|E_H(K)|}$ red-blue colorings $h$ of $E_H(K)$. Together with the independence of $\GnpOne$ and $\GnpTwo$ it follows that for any $K\seq K_n$ we have
\begin{equation*}
\begin{split}
\Pr(\Gnp\in\neg \cA\,|\,G_{n,p_1}=K)
& =  \Pr(\GnpTwo\in\textstyle\bigcup_{h}\neg\cA_{K,h})\,.
\end{split}
\end{equation*}
If in addition $K$ is in~$\cC$ as defined in~\eqref{eq:C}, we obtain with $$|E_H(K)|\leBy{eq:C}2e_H \cdot n^{v_H}p_1^{e_H}  \leByM{\eqref{eq:def_alpha}, \eqref{eq:def_pi}}(b'/8) n^{v_H}p^{e_H} \eqBy{eq:def_bTwo}   (b_2/2)  n^{v_H}p^{e_H}
$$ and the union bound that
\begin{equation} \label{eq:KinC}
\Pr(\Gnp\in\neg \cA\,|\,G_{n,p_1}=K) \leq
2^{(b_2/2) n^{v_H}p^{e_H}}\cdot \max_{h}\,
\Pr(\GnpTwo\in\neg\cA_{K,h})\,,
\end{equation}
where the maximum is over all red-blue colorings $h$ of $E_H(K)$.

Combining~\eqref{eq:FKG}, \eqref{eq:split-up},  and \eqref{eq:KinC}, we obtain that
\begin{multline*} 
\prob{\Gnp\in\neg \cA}
\leq 2 \( \Pr(\GnpOne\in\neg\cB) + 2^{(b_2/2) n^{v_H}p^{e_H}} \max_{K\in \cB,\, h} \Pr(\GnpTwo\in\neg\cA_{K,h})\)\\
\leByM{\text{Cl.~\ref{clm:first-round},\,Cl.~\ref{clm:second-round}}} 2\(2^{-b_1n^{v_H}p^{e_H}} +   2^{-(b_2/2)n^{v_H}p^{e_H}} \)\\
\leq 4\cdot2^{-\min\{b_1, b_2/2\} n^{v_H}p^{e_H} } \eqBy{eq:def_b}2^{2-2b n^{v_H}p^{e_H} } \leq 2^{-bn^{v_H}p^{e_H}}\,,
\end{multline*}
where in the last step we used that $2\leq bn^{v_H}p^{e_H}$ due to \eqref{eq:b-large-enough}.
\end{proof}

It remains to prove Claim~\ref{clm:first-round} and Claim~\ref{clm:second-round}.

\subsection{Proof of Claim~\ref{clm:first-round}}
\label{sec:first-round}
We start with the proof of Claim~\ref{clm:first-round}, which concerns
the `probability of failure' of the first round~$G_{n,p_1}$.

\begin{proof}[Proof of Claim~\ref{clm:first-round}]
  In order to verify that a graph $F\seq K_n$ is $(\rho,d)$-dense, an averaging argument
  shows that it suffices to check that every set $V\seq [n]$ of size
  \begin{equation} \label{eq:def_tn}
    \tn:=\lceil\rho n \rceil 
  \end{equation} 
  contains at least $d\binom{\tn}{2}$ edges of $F$ (see~\cite{MR1276825}). 

For any  graph $K\seq K_n$ (representing a fixed outcome of $\GnpOne$), any red-blue-coloring $h$ of $E_H(K)$, and any set $V\seq [n]$, $|V|=\tn$,  set
\begin{equation} \label{eq:def_base_V} 
 \Gamma(K,h,V):=\left\{\textstyle e\in \binom{V}{2} \bigmid \parbox{0.55\displaywidth}{$e$
    completes at least $z$ many $H$-covered copies of
    $G_-$ in 
    $E_H(K[V])$ that are colored red in $h$ to copies of
    $G$}\right\} \,,
\end{equation}
where $z$ is defined in~\eqref{eq:def_z}, and define
\begin{equation} \label{eq:BV}
 \cB_V:=\left\{K\seq K_n \bigmid \parbox{0.55\displaywidth}{For every red-blue-coloring $h$ of $E_H(K)$ that does not contain a blue copy of $H$, we have $|\Gamma(K,h,V)|\geq d\tn^2$}\right\} \,.
\end{equation}
 Note that $\cB_V$ is increasing.

For a fixed set $V\seq [n]$, $|V|=\tn$, and for any red-blue coloring $h$ of $E_H(K)$, let $k_{G_-}(K,h,V)$ denote the total number of $H$-covered red copies of $G_-$ in $E_H(K[V])$, and set
\begin{equation} \label{eq:AV}
 \cA_V:=\left\{K\seq K_n \bigmid \parbox{0.6\displaywidth}{For every red-blue-coloring $h$ of $E_H(K)$ that does not contain a blue copy of $H$, we have\\ $k_{G_-}(K,h,V)\geq a(G_-,H)\cdot \tn^{v_G}(\tn^{v_H-2}p_1^{e_H})^{e_G-1}$}\right\} \,.
\end{equation}
Note that $\cA_V$ is increasing.

Recall that $H$-covered copies of $G_-$ are copies of $G_-$ that are a central copy in a copy of a graph $G_-^H\in\cF(G_-,H)$ as defined in Definition~\ref{def:FGH}. Let $\cT$ be the family of all pairwise nonisomorphic graphs $T$ which
are unions of two graphs from $\cF(G_-,H)$, say $G^H_{1-}$ and~$G^{H}_{2-}$, such that some vertex pair $g\in \binom{V(T)}{2}$
completes both a central copy in $G^H_{1-}$ and a central copy in
$G^H_{2-}$ to a copy of $G$. For any graph $K\seq K_n$ and any set $V\seq [n]$, $|V|=\tn$, let $k_\cT(K,V)$ denote the number of
copies of graphs from $\cT$ in $K[V]$, and set 
\begin{equation} \label{eq:def_DV}
 \cD_V:=\Big\{K\seq K_n \,\Big|\, k_\cT(K,V) \leq 2^{\ell^2} \tn^{2v_G-2} (\tn^{v_H-2}p_1^{e_H})^{2(e_G-1)}\Big\} \,.
\end{equation}
Note that $\cD_V$ is decreasing.

We will show the following three statements.

\begin{fact} \label{fact:PrAV} For every fixed set $V\seq [n]$, $|V|=\tn$, we have
 $\Pr(\GnpOne\in\neg\cA_V)\leq 2^{-2b_1\, n^{v_H}p^{e_H}}$.
\end{fact}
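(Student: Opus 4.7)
The plan is to deduce Fact~\ref{fact:PrAV} by applying the induction hypothesis of Lemma~\ref{lemma:induction} to the induced random subgraph $\GnpOne[V]$, which is distributed as $G_{\tn, p_1}$, with $G_-$ playing the role of $G$. Any red-blue coloring $h$ of $E_H(K)$ (where $K$ represents the outcome of $\GnpOne$) that avoids a blue copy of $H$ restricts to a coloring of $E_H(K[V])$ (note $E_H(K[V]) \subseteq E_H(K)$, since every copy of $H$ inside $K[V]$ is a copy of $H$ inside $K$) that still avoids a blue copy of $H$ in $K[V]$. Thus any lower bound that the inductive statement provides on the number of $H$-covered red copies of $G_-$ in $E_H(K[V])$ translates directly into a lower bound on $k_{G_-}(K, h, V)$.

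The bulk of the work is verifying that the hypotheses of Lemma~\ref{lemma:induction} hold with $(G_-, H, \tn, p_1)$ in place of $(G, H, n, p)$. The graph $G_-$ is nonempty since $G$ is not a matching (so $e_G \geq 2$), and $m_2(G_-) \leq m_2(G) < x^*(H)$ via Lemma~\ref{lemma:equivalence}; the size requirement $\tn \geq n_0(G_-, H)$ is built into~\eqref{eq:def_n0}; and the upper bound $p_1 \leq \tn^{-1/m^*(H)}$ follows from $p_1 \leq p \leq n^{-1/m^*(H)}$ together with $\tn \leq n$. For the crucial lower bound, I would combine $\tn \geq \rho n$, the definition of $C$ in~\eqref{eq:def_C}, $C_1 \geq C(G_-, H)$ from~\eqref{eq:def_C1}, and the inequality $m_2(G_-, H) \leq m_2(G, H)$ (a consequence of $G_- \subseteq G$ giving $m_2(G_-) \leq m_2(G)$, combined with the monotonicity of $d_2(\cdot, J)$) to write
\begin{equation*}
p_1 = \alpha p \;\geq\; \alpha C\, n^{-1/m_2(G, H)} \;\geq\; C_1 (\rho n)^{-1/m_2(G, H)} \;\geq\; C(G_-, H)\, \tn^{-1/m_2(G_-, H)}.
\end{equation*}

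Once the hypotheses are verified, Lemma~\ref{lemma:induction} applied to $\GnpOne[V]$ yields, with probability at least $1 - 2^{-b(G_-, H)\tn^{v_H} p_1^{e_H}}$, that every blue-$H$-avoiding coloring of $E_H(\GnpOne[V])$ produces at least $a(G_-, H)\tn^{v_G}(\tn^{v_H-2} p_1^{e_H})^{e_G - 1}$ many $H$-covered red copies of $G_-$, which is exactly the count appearing in~\eqref{eq:AV}. For the error exponent, using $\tn \geq \rho n$, $p_1 = \alpha p$, and the definition $b_1 = \tfrac{1}{2} b(G_-, H) \rho^{v_H} \alpha^{e_H}$ in~\eqref{eq:def_bOne},
\begin{equation*}
b(G_-, H)\, \tn^{v_H} p_1^{e_H} \;\geq\; b(G_-, H)\,\rho^{v_H} \alpha^{e_H}\, n^{v_H} p^{e_H} \;=\; 2 b_1\, n^{v_H} p^{e_H},
\end{equation*}
which gives the required bound. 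There is no serious obstacle here; the fact is essentially a bookkeeping check confirming that the constants fixed at the beginning of the induction step in~\eqref{eq:def_rhoprime}--\eqref{eq:def_n0} were chosen precisely so that the inductive input at the smaller scale $(\tn, p_1)$ delivers the desired conclusion at scale $(n, p)$.
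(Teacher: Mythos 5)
Your proof is correct and follows essentially the same route as the paper: apply the inductive hypothesis of Lemma~\ref{lemma:induction} to $\GnpOne[V] \sim G_{\tn,p_1}$ with $G_-$ in place of $G$, verify the four hypotheses (the chain $p_1 \geq C(G_-,H)\tn^{-1/m_2(G_-,H)}$ using~\eqref{eq:def_C},~\eqref{eq:def_C1}, and $m_2(G_-,H)\leq m_2(G,H)$, together with $p_1 \leq \tn^{-1/m^*(H)}$ and $\tn\geq n_0(G_-,H)$), observe that a blue-$H$-avoiding coloring of $E_H(K)$ restricts to one of $E_H(K[V])$, and convert the error exponent $b(G_-,H)\tn^{v_H}p_1^{e_H}\geq 2b_1 n^{v_H}p^{e_H}$ via~\eqref{eq:def_bOne}. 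Your spelling-out of the coloring-restriction step and of the inequality $m_2(G_-,H)\leq m_2(G,H)$ makes explicit what the paper compresses into ``recalling the definition of $\cA_V$'' and the chain in its displayed inequalities, but the argument is the same.
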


\begin{fact} \label{fact:PrDV} For every fixed set $V\seq [n]$, $|V|=\tn$, we have
 $\Pr(\GnpOne\in\cD_V)\geq 1/2$.
\end{fact}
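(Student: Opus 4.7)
The plan is a first moment computation followed by Markov's inequality. Since $\cD_V$ requires $k_\cT(\GnpOne, V) \leq 2^{\ell^2}\tn^{2v_G-2}(\tn^{v_H-2}p_1^{e_H})^{2(e_G-1)}$, it suffices to show that
\[
\EE\bigl[k_\cT(\GnpOne, V)\bigr] \leq \tfrac{1}{2}\cdot 2^{\ell^2}\,\tn^{2v_G-2}\bigl(\tn^{v_H-2}p_1^{e_H}\bigr)^{2(e_G-1)}.
\]
By linearity and the crude bound $\tn^{v(T)}p_1^{e(T)}$ on the expected number of copies of any graph $T$ in $\GnpOne[V]$, we have $\EE[k_\cT(\GnpOne, V)] \leq \sum_{T\in\cT}\tn^{v(T)}p_1^{e(T)}$. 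Every $T \in \cT$ has at most $\ell$ vertices by \eqref{eq:def_ell}, so, allowing isolated vertices, $|\cT| \leq 2^{\binom{\ell}{2}} \leq 2^{\ell^2-1}$. It therefore suffices to prove, for each individual $T \in \cT$, the per-term inequality
\[
\tn^{v(T)}p_1^{e(T)} \leq \tn^{2v_G-2+2(e_G-1)(v_H-2)}\,p_1^{2(e_G-1)e_H}.
\]

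To verify this, I fix a decomposition $T = G_{1-}^H \cup G_{2-}^H$ with $G_{i-}^H \in \cF(G_-, H)$ for $i=1,2$, together with the vertex pair $g$ that completes both central copies of $G_-$ to copies of $G$, as guaranteed by the definition of $\cT$. Set $J := G_{1-}^H \cap G_{2-}^H$ and $L_i := L(G_{i-}^H)$. From \eqref{eq:evGH} and inclusion--exclusion one reads off
\[
v(T) = 2v_G + 2(e_G-1)(v_H-2) - L_1 - L_2 - v(J), \qquad e(T) = 2(e_G-1)e_H - e(J),
\]
so the displayed per-term inequality is equivalent to $\tn^{L_1+L_2+v(J)-2}\,p_1^{e(J)} \geq 1$.

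Since the two vertices of $g$ lie in the central copy of $G_-$ inside each $G_{i-}^H$, the intersection $J$ contains both vertices of $g$, and in particular $v(J) \geq 2$. If $e(J) = 0$ this alone yields the bound. Otherwise, viewing $J$ as a subgraph of $G_{1-}^H$ that contains the vertices of $g$, Lemma \ref{lemma:goalJ} gives $v(J) + L_1 - 2 \geq e(J)/m_2(G, H)$, and combined with $L_2 \geq 0$ this reduces the task to checking $p_1 \geq \tn^{-1/m_2(G,H)}$. That last inequality follows from $\tn \geq \rho n$, the lower bound $p_1 = \alpha p \geq \alpha C n^{-1/m_2(G,H)}$ on $p_1$, and the choice of $C$ in \eqref{eq:def_C}. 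The main technical hurdle is the exponent bookkeeping in the middle step; once one sees that Lemma \ref{lemma:goalJ} applies precisely because $J$ inherits the pair $g$, the rest is a routine application of Markov's inequality.
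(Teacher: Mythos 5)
Your proof is correct and takes essentially the same route as the paper: both reduce the per-graph bound on $\tn^{v(T)}p_1^{e(T)}$ to an application of Lemma~\ref{lemma:goalJ} to $J=G_{1-}^H\cap G_{2-}^H$ (viewed inside $G_{1-}^H$), use $L_2\geq 0$ and $p_1\geq\tn^{-1/m_2(G,H)}$ (from~\eqref{eq:p1-large-enough} with $C_1\geq 1$), and finish with the crude bound on $|\cT|$ and Markov's inequality. The only cosmetic difference is your explicit case split on $e(J)=0$, which is unnecessary since Lemma~\ref{lemma:goalJ} applies uniformly to any subgraph containing the two vertices of $g$.
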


\begin{fact} \label{fact:jensen} For every fixed set $V\seq [n]$, $|V|=\tn$, we have
 $\cA_V \cap  \cD_V \, \seq \, \cB_V$.
\end{fact}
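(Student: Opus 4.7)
Fix $K \in \cA_V \cap \cD_V$ and a red-blue coloring $h$ of $E_H(K)$ containing no blue copy of $H$; our goal is to show $|\Gamma(K,h,V)| \geq d\tn^2$. The plan is a standard double-counting plus Cauchy--Schwarz argument where the numerator (total count of completion edges of $H$-covered red copies of $G_-$) is controlled by $\cA_V$, and the denominator (ordered pairs of such copies sharing a completion edge) is controlled by $\cD_V$.

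For each $e \in \binom{V}{2}$, let $X_e$ denote the number of $H$-covered red copies of $G_-$ in $E_H(K[V])$ that are completed to a copy of $G$ by $e$. Since every $H$-covered red copy of $G_-$ contributes at least one such pair $(\bar G_-, e)$, the hypothesis $K \in \cA_V$ yields $\sum_{e} X_e \geq k_{G_-}(K,h,V) \geq a(G_-,H)\,\tn^{v_G}(\tn^{v_H-2}p_1^{e_H})^{e_G-1}$. By definition each $e \notin \Gamma(K,h,V)$ satisfies $X_e < z$, and a short calculation using $\tn \geq \rho n$ and the choice~\eqref{eq:def_cGamma} of $c_\Gamma$ yields $z\binom{\tn}{2} \leq \tfrac{1}{2} a(G_-,H)\,\tn^{v_G}(\tn^{v_H-2}p_1^{e_H})^{e_G-1}$, so that $\sum_{e \in \Gamma(K,h,V)} X_e$ is at least one-half of the above lower bound. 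The Cauchy--Schwarz inequality then gives
\[
|\Gamma(K,h,V)| \;\geq\; \frac{\bigl(\sum_{e\in \Gamma(K,h,V)} X_e\bigr)^2}{\sum_{e} X_e^2}\,,
\]
so it remains to upper bound $\sum_e X_e^2$.

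Observe that $\sum_e X_e^2$ counts ordered pairs of $H$-covered red copies of $G_-$ in $E_H(K[V])$ that share a common completion vertex pair. For each such pair, pick $H$-covers of both copies; the union of these covers is a subgraph of $K[V]$ isomorphic to some $T \in \cT$. Conversely, any given embedded copy of some $T \in \cT$ in $K[V]$ produces at most $v(T)^{2v_G} \leq \ell^{2v_G}$ such ordered pairs, since there are at most $v(T)^{v_G}$ subgraphs of $T$ isomorphic to $G_-$, and we choose two of them in an ordered fashion. Thus $\sum_e X_e^2 \leq \ell^{2v_G}\,k_\cT(K,V)$, and the hypothesis $K \in \cD_V$ bounds this by $\ell^{2v_G}\cdot 2^{\ell^2}\,\tn^{2v_G-2}(\tn^{v_H-2}p_1^{e_H})^{2(e_G-1)}$. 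Substituting everything,
\[
|\Gamma(K,h,V)| \;\geq\; \frac{a(G_-,H)^2}{4\,\ell^{2v_G}\, 2^{\ell^2}}\,\tn^2 \;\geq\; d\tn^2
\]
by the choice~\eqref{eq:def_dprime} of $d$ (with a factor-three slack).

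The main delicate point will be the multiplicity bound $\sum_e X_e^2 \leq \ell^{2v_G}\,k_\cT(K,V)$: one must verify that every ordered pair of $H$-covered central copies of $G_-$ sharing a completion pair really does sit inside (the image of) some single $T \in \cT$ embedded in $K[V]$, and that the count is unaffected by the non-uniqueness of the chosen $H$-covers. The rest of the argument consists of routine substitutions of the definitions of $X_e$, $z$, $c_\Gamma$, and $d$, together with the inequality $\tn \geq \rho n$ used to pass between the $n$-scaled threshold $z$ and the $\tn$-scaled quantities produced by $\cA_V$ and $\cD_V$.
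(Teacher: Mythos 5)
Your argument is correct and follows essentially the same strategy as the paper's proof, but with a small and in fact slightly cleaner technical variant. Where the paper bounds the second moment via $\sum_e \binom{x_e}{2}$, uses Jensen's inequality, and then must verify that $x_e \geq 3$ for every $e \in \Gamma(K,h,V)$ (equation~\eqref{eq:xe-large}) to pass from $\binom{x_e}{2}$ to $x_e^2/3$, you work directly with $\sum_e X_e^2$ and Cauchy--Schwarz, thereby skipping the $x_e\geq 3$ verification entirely. This yields the lower bound $|\Gamma(K,h,V)| \geq 3d\tn^2$, a factor-$3$ improvement over the paper's $d\tn^2$, showing the constant $d$ in~\eqref{eq:def_dprime} was chosen with exactly that Jensen slack in mind.

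The price for this is the point you flag: $\sum_e X_e^2$ includes the diagonal triples $(e, \bar G_-, \bar G_-)$, and you must check that these too are charged against $k_\cT(K,V)$. This does work, because the definition of $\cT$ permits the two graphs $G^H_{1-}$ and $G^H_{2-}$ from $\cF(G_-,H)$ to coincide, so every $G_-^H \in \cF(G_-,H)$ is itself a member of $\cT$ (the vertex pair $g$ completing its central copy always exists since $v(G_-)=v(G)$); hence the union of a single cover with itself is counted by $k_\cT$. Your multiplicity bound $\ell^{2v_G}$ per $T$-copy then covers ordered pairs including the diagonal, via the observation that each triple determines (and is determined by) an ordered pair of injective $v_G$-tuples in $V(T)$ with agreeing image of the missing edge. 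It would strengthen the write-up to state this explicitly rather than leaving it as a flagged "delicate point," but the claim is true. The non-uniqueness of the chosen $H$-covers is harmless for the same reason as in the paper: you only need each triple to map to at least one embedded $T$-copy, and each $T$-copy to have at most $\ell^{2v_G}$ preimages.
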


With these statements in hand, Claim~\ref{clm:first-round} can be deduced as follows.
Note that Fact~\ref{fact:jensen} is equivalent to
\begin{equation*}    \neg\cB_V \cap  \cD_V  \, \seq \, \neg\cA_V\,.
\end{equation*}
Since $\neg\cB_V$ and $\cD_V$ are both decreasing, we obtain
with the Harris inequality (Theorem~\ref{thm:fkg}) that

\begin{equation} \label{eq:PrBVC}
\begin{split}
\Pr(\GnpOne\in\neg\cB_V)&\leByM{\text{Thm.~\ref{thm:fkg}}} \Pr(\GnpOne\in\neg\cB_V  \,|\,\GnpOne\in\cD_V)\\
&\leByM{\text{Fact~\ref{fact:PrDV}}} 2\, \Pr(\GnpOne\in\neg \cB_V\cap \cD_V)\\
&\leByM{\text{Fact~\ref{fact:jensen}}} 2 \Pr(\GnpOne\in\neg\cA_V)\\
&\leByM{\text{Fact~\ref{fact:PrAV}}} 2^{1-2b_1\,n^{v_H}p^{e_H}}\,.\\
\end{split}
\end{equation}
By definition of $\cB$ and $\cB_V$ (see~\eqref{eq:B} and~\eqref{eq:BV}), we have
\begin{equation} \label{eq:nbc}
   \neg\cB  \quad \seq \quad \bigcup_{\substack{V\seq [n]: \\
    |V|=\tn}} \neg \cB_V \,.
\end{equation}
Taking the union bound over all sets $V\seq [n]$ with $|V|=\tn$ we obtain
\begin{equation} \label{eq:rho-d-dense}
\Pr(\GnpOne\in\neg \cB)\leBy{eq:nbc} \sum_{\substack{V\seq [n]: \\
    |V|=\tn}}\Pr(\GnpOne\in\neg \cB_V) \leBy{eq:PrBVC}
2^{n+1-2b_1\,n^{v_H}p^{e_H}} \leq
2^{-b_1n^{v_H}p^{e_H}}\,,
\end{equation}
where in the last step we used that $n+1\leq b_1n^{v_H}p^{e_H}$ due to~\eqref{eq:b-large-enough} and~\eqref{eq:def_b}.
\end{proof}

It remains to prove Facts~\ref{fact:PrAV}, \ref{fact:PrDV}, and~\ref{fact:jensen}. For all these proofs, note that $G_{n,p_1}[V]$ behaves exactly like a binomial random graph $G_{\tn,p_1}$, and that
\begin{equation}
  \label{eq:p1-large-enough}
  \qquad p_1 \geByM{\eqref{eq:p},\,\eqref{eq:def_pi}} \alpha C
  n^{-1/m_2(G,H)} 
  \geByM{\eqref{eq:def_C},\,\eqref{eq:def_tn}}  C_1\, \tn^{-1/m_2(G,H)}\,.
\end{equation}

\begin{proof}[Proof of Fact~\ref{fact:PrAV}] 
Owing to the monotonicity of the $2$-density and to the assumption on $G$ in Lemma~\ref{lemma:induction}, we have
$m_2(G_-)\leq m_2(G)< x^*(H)$. Moreover, by our choice of
constants, we have
$$\tn \geBy{eq:def_tn} \rho n_0 \geBy{eq:def_n0} n_0(G_-,H)$$
and
\begin{equation*}  C(G_-,H) \widetilde{n}^{-1/m_2(G_-,H)} \leByM{\eqref{eq:def_C1}, \eqref{eq:p1-large-enough}} p_1 \leByM{\eqref{eq:p},\,\eqref{eq:def_pi}} \alpha n^{-1/m^*(H)}
  \leq \tn^{-1/m^*(H)}\,.
\end{equation*}
Thus we may apply the induction hypothesis to $G_{n,p_1}[V]$ to infer
\begin{equation*} 
  \Pr(\GnpOne\in\neg\cA_V)\leq 2^{-b(G_-,H)\,\tn^{v_H}p_1^{e_H}}\leByM{\eqref{eq:def_bOne}, \eqref{eq:def_pi}, \eqref{eq:def_tn}}2^{-2b_1\, n^{v_H}p^{e_H}}
\end{equation*}
recalling the definition of $\cA_V$ in~\eqref{eq:AV}.
\end{proof}

\begin{proof}[Proof of Fact~\ref{fact:PrDV}] Consider a fixed graph
  $T\in\cT$ as defined before~\eqref{eq:def_DV}, and let
  \[
  	J:=G^H_{1-}\cap G^H_{2-}
  \] 
  denote the intersection of the two graphs
  from $\cF(G_-,H)$ forming~$T$. We obtain with
  Lemma~\ref{lemma:goalJ} that
  \begin{equation} \label{eq:exp-J} \tn^{v(J)}
    p_1^{e(J)}\geBy{eq:p1-large-enough}\tn^{v(J)-e(J)/m_2(G,H)}\geByM{\text{L.10}}
    \tn^{2 - L(G_{1-}^H)}\,,
  \end{equation}
where in the first step we also used that $C_1\geq 1$. Thus the
expected number of copies of~$T$ in $G_{n,p_1}[V]$ is bounded by
\begin{equation*}
 \begin{split}
  \tn^{v(T)}p_1^{e(T)}   &\leBy{eq:exp-J} \tn^{v(G^H_{1-}) + v(G^H_{2-})-2+L(G^H_{1-})}
    p_1^{e(G^H_{1-}) + e(G^H_{2-})}\\ 
      &\eqBy{eq:evGH}   \tn^{2\big(v_G +
      (e_G-1)(v_H-2)\big)-2 - L(G^H_{2-})}  p_1^{2(e_G-1) e_H}\\
 &\leq\tn^{2v_G-2} (\tn^{v_H-2}p_1^{e_H})^{2(e_G-1)}     
       \,, 
\end{split}
\end{equation*}
where in the last step we used that $L(G^H_{2-})$ is nonnegative. Thus in total the expected number of graphs from $\cT$ in $G_{n,p_1}[V]$ is at most
\begin{equation*} 
2^{\ell^2-1} \cdot \tn^{2v_G-2} (\tn^{v_H-2}p_1^{e_H})^{2(e_G-1)}
\end{equation*}
where we bounded $|\cT|$ by the number of graphs on at most
$\ell$ vertices (recall~\eqref{eq:def_ell}), which in turn
is bounded by $\ell 2^{\binom{\ell}{2}}\leq 2^{\ell^2-1}$.

Fact~\ref{fact:PrDV} now follows with Markov's inequality, recalling the definition of $\cD_V$ in~\eqref{eq:def_DV}.
\end{proof}

\begin{proof}[Proof of Fact~\ref{fact:jensen}] Consider a fixed set
  $V\seq[n]$, $|V|=\tn$, and an arbitrary graph $K\seq K_n$. For any
  red-blue-coloring $h$ of $E_H(K)$ and for every edge $e\in
  \binom{V}{2}$, let
\begin{equation*} 
 x_e=x_e(K,h,V):=\left|\left\{\overline{G_-} \bigmid \parbox{0.55\displaywidth}{$\overline{G_-}$ is an $H$-covered copy of $G_-$ in $E_H(K[V])$ that is colored red in $h$, and $e$ completes $\overline{G_-}$ to a copy of $G$}\right\}\right| \,.
\end{equation*}

Note that, by our definition of $\Gamma(K,h,V)$ in~\eqref{eq:def_base_V}), for all $e\in \Gamma(K,h,V)$ we have
\begin{equation}
  \label{eq:xe-large}
  x_e  \geq z \geByM{\eqref{eq:def_z}, \eqref{eq:p1-large-enough}}  c_\Gamma\, C_1^{e_H(e_G-1)}\,n^{v_G-2}
  (n^{v_H-2}\tn^{-e_H/m_2(G,H)})^{e_G-1}
  \geByM{\eqref{eq:exponent-2},\eqref{eq:def_C1}} 3\,,
\end{equation}
where in the second inequality we also used that $\tn\leq n$.

We will show that if $K$ is in $\cA_V$, we have
\begin{equation}
  \label{eq:ifAVholds}
\sum_{e\in \Gamma(K,h,V)}x_e\geq \frac{a(G_-,H)}{2} \cdot  \tn^{v_G}(\tn^{v_H-2}p_1^{e_H})^{e_G-1}
\end{equation}
for every coloring $h$ of $E_H(K)$ that contains no blue copy
of~$H$, and that if $K$ is in $\cD_V$, we have
\begin{equation} \label{eq:ifDVholds}
\sum_{e\in \Gamma(K,h,v)}\binom{x_e}{2}\leq
  2^{\ell^2}\ell^{2v_G}\cdot \tn^{2v_G-2}
   (\tn^{v_H-2}p_1^{e_H})^{2(e_G-1)} 
\end{equation}
for every coloring $h$ of $E_H(K)$.

As by Jensen's inequality we have
\begin{equation} 
  \label{eq:jensen}
  \sum_{e\in \Gamma(K,h,V)}\binom{x_e}{2} \geq
  \big|\Gamma(K,h,V)\big|
  \binom{\big|\Gamma(K,h,V)\big|^{-1}\sum_{e\in\Gamma(K,h,V)}x_e}{2}
  \geBy{eq:xe-large}
  \frac  {(\sum_{e\in
      \Gamma(K,h,V)}x_e)^2}{3\big|\Gamma(K,h,V)\big|}\,, 
\end{equation}
(where in the second inequality we used that $\binom{x}{2}\geq x^2/3$ for $x\geq 3$), \eqref{eq:ifAVholds} and \eqref{eq:ifDVholds} will imply that for any $K\in\cA_V \cap \cD_V$ we have
\begin{equation*} 
\big|\Gamma(K,h,V)\big|
\geBy{eq:jensen} \frac{(\sum_{e\in \Gamma(K,h,V)}x_e)^2}{3\sum_{e\in \Gamma(K,h,V)} \binom{x_e}{2}} \geByM{\eqref{eq:ifAVholds},\eqref{eq:ifDVholds}} \frac{a(G_-,H)^2}{12\cdot 2^{\ell^2}\cdot\ell^{2v_G}}\cdot \tn^2 \eqBy{eq:def_dprime} d\tn^2
\end{equation*}
for every coloring $h$ of $E_H(K)$ that does not contain a blue copy of $H$, i.e., that $K$ satisfies~$\cB_V$ as defined in~\eqref{eq:BV}.

It remains to show \eqref{eq:ifAVholds} and \eqref{eq:ifDVholds}. To verify \eqref{eq:ifAVholds}, recall that $k_{G_-}(K,h,V)$ denotes the total number of $H$-covered red copies of $G_-$ in $E_H(K[V])$. Since every such copy contributes to at least one of the $x_e$, we have
\begin{equation} \label{eq:sum_xe-0}
  \sum_{e\in \binom{V}{2}} x_e \geq k_{G_-}(K,h,V)\,.
\end{equation}
Note that
\begin{equation}  \label{eq:z-explained}
z\eqBy{eq:def_z}c_\Gamma \cdot n^{v_G-2} (n^{v_H-2}p_1^{e_H})^{e_G-1}\\
\leByM{\eqref{eq:def_cGamma},\,\eqref{eq:def_tn}} a(G_-,H)\cdot
\tn^{v_G-2} (\tn^{v_H-2}p_1^{e_H})^{e_G-1}\,. 
\end{equation}
Since by definition of $\Gamma(K,h,V)$ we have $x_e<z$ for all $e\in
\binom{V}{2}\setminus \Gamma(K,h,V)$ (recall~\eqref{eq:def_base_V}), it
follows that
\begin{multline*} 
 \sum_{e\in \Gamma(K,h,V)}x_e\;
 \geByM{\eqref{eq:sum_xe-0}}\; k_{G_-}(K,h,V)-\sum_{e\in \binom{V}{2}
   \setminus\Gamma(K,h,V)}x_e
  \geq\; k_{G_-}(K,h,V)-\binom{\tn}{2}\cdot z\\
\geByM{\eqref{eq:z-explained}}\; k_{G_-}(K,h,V)- \frac{a(G_-,H)}{2}
\cdot\tn^{v_G}(\tn^{v_H-2}p_1^{e_H})^{e_G-1}\,. 
\end{multline*}
It follows from the definition of $\cA_V$ in~\eqref{eq:AV} that indeed \eqref{eq:ifAVholds} holds
for every coloring $h$ of~$E_H(K)$ that contains no blue copy
of~$H$ if $K$ is in $\cA_V$.

To verify~\eqref{eq:ifDVholds}, recall that every $H$-covered copy of $G_-$ is contained in a copy of a graph $G_-^H\in\cF(G_-,H)$ (see Definition~\ref{def:H-covered} and Definition~\ref{def:FGH}). It follows with
the definition of $k_\cT(K,V)$ (see the paragraph before \eqref{eq:def_DV}) that
\begin{equation}
  \label{eq:sum_xe-squared}
   \sum_{e\in \binom{V}{2}} \binom{x_e}{2}  \leq
  \ell^{2v_G}\, k_\cT(K,V)\,, 
\end{equation}
where $\ell$ is as defined in~\eqref{eq:def_ell}.  
Here the constant~$\ell^{2v_G}$ follows from the fact that 
a given copy of some $T\in\cT$ contributes at most
$((v_T)_{v_G})^2 \leq (v_T)^{2v_G}\leq \ell^{2v_G}$
to the sum. 

Consequently, if $K\in\cD_V$, the definition of $\cD_V$ in~\eqref{eq:def_DV} implies that
\begin{equation*} 
\sum_{e\in \Gamma(K,h,v)}\binom{x_e}{2}\leq
\sum_{e\in \binom{V}{2}}\binom{x_e}{2}
   \leByM{\eqref{eq:def_DV},\,\eqref{eq:sum_xe-squared}}
   2^{\ell^2}\ell^{2v_G} \cdot \tn^{2v_G-2}
   (\tn^{v_H-2}p_1^{e_H})^{2(e_G-1)} 
\end{equation*}
for every coloring $h$ of $E_H(K)$, as claimed in~\eqref{eq:ifDVholds}.
\end{proof}

\subsection{Proof of Claim~\ref{clm:second-round}}
\label{sec:second-round}
It now remains to prove Claim~\ref{clm:second-round}. 
 
\begin{proof}[Proof of Claim~\ref{clm:second-round}]
 Consider a fixed graph $K\in\cB$ and a fixed red-blue-coloring  $h$ of~$E_H(K)$. By definition of the event $\cB$ (recall \eqref{eq:B}), the graph $\Gamma(K,h)$ is $(\rho, d)$-dense.
  
  Note that due to \eqref{eq:def_alpha} and \eqref{eq:p2} we have 
  \begin{equation} \label{eq:b'}
  b'n^{v_H}p_2^{e_H} \geq (b'/2)n^{v_H}p^{e_H}\,.
  \end{equation}

Thus Lemma~\ref{lemma:disjoint} yields with \eqref{eq:p} and our choice of constants in~\eqref{eq:def_rhoprime} and \eqref{eq:def_n0} that with probability at least \begin{equation*}
\begin{split}
 1-2^{1-b'n^{v_H}p_2^{e_H}} \geByM{ \eqref{eq:b'},  \eqref{eq:def_bTwo}} 
 1-2^{1 - 2b_2 n^{v_H}p^{e_H}} 
 \geq 1-2^{-b_2n^{v_H}p^{e_H}}
\end{split}
\end{equation*}
(where in the last step we used that $1\leq b^{v_H}p^{e_H}\leq b_2 n^{v_H}p^{e_H}$ due to~\eqref{eq:b-large-enough} and~\eqref{eq:def_b}),
the graph $\Gamma(K,h)\cap G_{n,p_2}$ contains a family of at least
$b'n^{v_H}p_2^{e_H}$ pairwise edge-disjoint copies of $H$. 

To avoid creating a blue copy of $H$, one edge from each of these copies needs to be colored red, and by the definition of $\Gamma(K,h)$
(see~\eqref{eq:def_base}), each such edge that is colored red creates at least $z$ many $H$-covered red copies of $G$. Thus any extension of $h$ to a coloring of $E_H(K)\cup E_H(G_{n,p_2})\seq E_H(K\cup G_{n,p_2})$ creates a
blue copy of $H$ or at least 
\begin{equation*}
\begin{split}
b'n^{v_H}p_2^{e_H}\cdot z
& \geByM{\eqref{eq:def_z}, \eqref{eq:b'}} b'/2\cdot c_{\Gamma} \cdot n^{v_H}p^{e_H} \cdot n^{v_G-2}(n^{v_H-2}p_1^{e_H})^{e_G-1}\\
&\eqByM{\eqref{eq:def_a},\,\eqref{eq:def_pi}} a n^{v_G}(n^{v_H-2}p^{e_H})^{e_G}
\end{split}
\end{equation*}
many $H$-covered red copies of $G$. Thus $\GnpTwo$ is indeed in $\cA_{K,h}$ as defined in~\eqref{eq:AKh} with the claimed probability.
\end{proof}

\section{Concluding remarks}
\label{sec:concluding}

We believe that the proof for the two-color case given
here can be extended to the setting with more than two colors along
the lines of~\cite{MR1276825}. Namely, for given graphs $H_1, \dots,
H_k$ with $m_2(H_k)\leq \dots \leq m_2(H_2) < m_2(H_1)$ and $H_1$
strictly balanced \wrt $d_2(H_2, \cdot)$, one should be able to prove
that $\Pr(\Gnp\rightarrow (H_1, \dots, H_k))=1-o(1)$ if $p\geq
Cn^{-1/m_2(H_{2},H_1)}$ as follows: Clearly, in order to prove
$\Gnp\rightarrow (H_1, \dots, H_k)$ it suffices to prove
$\Gnp\rightarrow (G, \dots, G, H)$, where $G$ denotes the disjoint
union of $H_2, \dots, H_{k}$, and $H:=H_1$. Furthermore, it is not
hard to see that $m_2(G)=m_2(H_{2})$, and consequently also
$m_2(G,H)=m_2(H_{2},H_1)$. Thus it suffices to show that
$\Pr(\Gnp\rightarrow (G, \dots, G, H))=1-o(1)$ if $p\geq
Cn^{-1/m_2(G,H)}$. We believe that this can be done by combining the
approach via double induction (on $e(G)$ and the number of colors $k$)
used in~\cite{MR1276825} with the ideas presented in this paper. Note
that this implies using Lemma 1 of~\cite{MR1276825}, which relies on
the regularity lemma for dense graphs.

We do not pursue this further here. In our view, a more interesting next step
would be to extend the approach taken in~\cite{up:frs} to the
asymmetric scenario, with the goal of deriving $1$-statements for more
general settings, in particular for the hypergraph setting. This might also help in getting rid of the balancedness assumption on $H$ in the existing proofs.

 An altogether
different open question is the proof of the $0$-statement in
Conjecture~\ref{conj:asymmetric colorings}. With some extra work the
approach in~\cite{MR2531778} can be pushed through to prove the
$0$-statement for certain graphs $G$ and $H$ that are not complete,
but a general proof does not seem to be within reach of the known
methods.

\begin{bibdiv}
\begin{biblist}

\bib{MR0384579}{book}{
      author={Berge, C.},
       title={Graphs and hypergraphs},
     edition={revised},
   publisher={North-Holland Publishing Co.},
     address={Amsterdam},
        date={1976},
        note={Translated from the French by Edward Minieka, North-Holland
  Mathematical Library, Vol. 6},
      review={\MR{MR0384579 (52 \#5453)}},
}

\bib{MR0419285}{article}{
      author={Burr, S.~A.},
      author={Erd{\H{o}}s, P.},
      author={Lov\'asz, L.},
       title={On graphs of {R}amsey type},
        date={1976},
        ISSN={0381-7032},
     journal={Ars Combinatoria},
      volume={1},
      number={1},
       pages={167--190},
      review={\MR{0419285 (54 \#7308)}},
}

\bib{MR1601954}{book}{
      author={Chung, F.},
      author={Graham, R.},
       title={Erd{\H{o}}s on graphs},
   publisher={A K Peters Ltd.},
     address={Wellesley, MA},
        date={1998},
        ISBN={1-56881-079-2; 1-56881-111-X},
        note={His legacy of unsolved problems},
      review={\MR{MR1601954 (99b:05031)}},
}

\bib{up:cg}{unpublished}{
      author={Conlon, D.},
      author={Gowers, W.~T.},
       title={Combinatorial theorems in sparse random sets},
        note={Submitted},
}

\bib{MR0309498}{article}{
      author={Fortuin, C.~M.},
      author={Kasteleyn, P.~W.},
      author={Ginibre, J.},
       title={Correlation inequalities on some partially ordered sets},
        date={1971},
        ISSN={0010-3616},
     journal={Comm. Math. Phys.},
      volume={22},
       pages={89--103},
      review={\MR{MR0309498 (46 \#8607)}},
}

\bib{MR932121}{article}{
      author={Frankl, P.},
      author={R{\"o}dl, V.},
       title={Large triangle-free subgraphs in graphs without {$K_4$}},
        date={1986},
        ISSN={0911-0119},
     journal={Graphs Combin.},
      volume={2},
      number={2},
       pages={135--144},
         url={http://dx.doi.org/10.1007/BF01788087},
      review={\MR{MR932121 (89b:05124)}},
}

\bib{up:frs}{article}{
      author={Friedgut, E.},
      author={R{\"o}dl, V.},
      author={Schacht, M.},
       title={Ramsey properties of random discrete structures},
        date={2010},
        ISSN={1042-9832},
     journal={Random Structures Algorithms},
      volume={37},
      number={4},
       pages={407--436},
         url={http://dx.doi.org/10.1002/rsa.20352},
      review={\MR{2760356}},
}

\bib{MR2187740}{incollection}{
      author={Gerke, S.},
      author={Steger, A.},
       title={The sparse regularity lemma and its applications},
        date={2005},
   booktitle={Surveys in combinatorics 2005},
      series={London Math. Soc. Lecture Note Ser.},
      volume={327},
   publisher={Cambridge Univ. Press},
     address={Cambridge},
       pages={227--258},
      review={\MR{MR2187740}},
}

\bib{MR0115221}{article}{
      author={Harris, T.~E.},
       title={A lower bound for the critical probability in a certain
  percolation process},
        date={1960},
     journal={Proc. Cambridge Philos. Soc.},
      volume={56},
       pages={13--20},
      review={\MR{MR0115221 (22 \#6023)}},
}

\bib{MR1138428}{article}{
      author={Janson, S.},
       title={Poisson approximation for large deviations},
        date={1990},
        ISSN={1042-9832},
     journal={Random Structures \& Algorithms},
      volume={1},
      number={2},
       pages={221--229},
      review={\MR{MR1138428 (93a:60041)}},
}

\bib{MR1782847}{book}{
      author={Janson, S.},
      author={{\L}uczak, T.},
      author={Rucinski, A.},
       title={Random graphs},
      series={Wiley-Interscience Series in Discrete Mathematics and
  Optimization},
   publisher={Wiley-Interscience, New York},
        date={2000},
        ISBN={0-471-17541-2},
      review={\MR{MR1782847 (2001k:05180)}},
}

\bib{MR2096818}{article}{
      author={Janson, Svante},
      author={Ruci{\'n}ski, Andrzej},
       title={The deletion method for upper tail estimates},
        date={2004},
        ISSN={0209-9683},
     journal={Combinatorica},
      volume={24},
      number={4},
       pages={615--640},
         url={http://dx.doi.org/10.1007/s00493-004-0038-3},
      review={\MR{2096818 (2005i:60019)}},
}

\bib{MR1661982}{incollection}{
      author={Kohayakawa, Y.},
       title={Szemer\'edi's regularity lemma for sparse graphs},
        date={1997},
   booktitle={Foundations of computational mathematics (rio de janeiro, 1997)},
   publisher={Springer},
     address={Berlin},
       pages={216--230},
      review={\MR{MR1661982 (99g:05145)}},
}

\bib{MR1609513}{article}{
      author={Kohayakawa, Y.},
      author={Kreuter, B.},
       title={Threshold functions for asymmetric {R}amsey properties involving
  cycles},
        date={1997},
        ISSN={1042-9832},
     journal={Random Structures \& Algorithms},
      volume={11},
      number={3},
       pages={245--276},
      review={\MR{MR1609513 (99g:05159)}},
}

\bib{MR1479298}{article}{
      author={Kohayakawa, Y.},
      author={{\L}uczak, T.},
      author={R{\"o}dl, V.},
       title={On {$K\sp 4$}-free subgraphs of random graphs},
        date={1997},
        ISSN={0209-9683},
     journal={Combinatorica},
      volume={17},
      number={2},
       pages={173--213},
      review={\MR{MR1479298 (98h:05166)}},
}

\bib{MR1980964}{article}{
      author={Kohayakawa, Y.},
      author={R{\"o}dl, V.},
       title={Regular pairs in sparse random graphs.~{I}},
        date={2003},
        ISSN={1042-9832},
     journal={Random Structures Algorithms},
      volume={22},
      number={4},
       pages={359--434},
      review={\MR{MR1980964 (2004b:05187)}},
}

\bib{MR1606853}{article}{
      author={Kreuter, B.},
       title={Threshold functions for asymmetric {R}amsey properties with
  respect to vertex colorings},
        date={1996},
        ISSN={1042-9832},
     journal={Random Structures \&Algorithms},
      volume={9},
      number={3},
       pages={335--348},
      review={\MR{MR1606853 (99c:05178)}},
}

\bib{MR1182457}{article}{
      author={{\L}uczak, T.},
      author={Ruci{\'n}ski, A.},
      author={Voigt, B.},
       title={Ramsey properties of random graphs},
        date={1992},
        ISSN={0095-8956},
     journal={J.~Combin. Theory Ser.~B},
      volume={56},
      number={1},
       pages={55--68},
      review={\MR{MR1182457 (94b:05172)}},
}

\bib{MR2531778}{article}{
      author={Marciniszyn, M.},
      author={Skokan, J.},
      author={Sp{\"o}hel, R.},
      author={Steger, A.},
       title={Asymmetric {R}amsey properties of random graphs involving
  cliques},
        date={2009},
        ISSN={1042-9832},
     journal={Random Structures Algorithms},
      volume={34},
      number={4},
       pages={419--453},
         url={http://dx.doi.org/10.1002/rsa.20239},
      review={\MR{MR2531778 (2010i:05313)}},
}

\bib{MR1249720}{incollection}{
      author={R{\"o}dl, V.},
      author={Ruci{\'n}ski, A.},
       title={Lower bounds on probability thresholds for {R}amsey properties},
        date={1993},
   booktitle={Combinatorics, Paul Erd{\H{o}}s is eighty, vol.~1},
      series={Bolyai Soc. Math. Stud.},
   publisher={J\'anos Bolyai Math. Soc.},
     address={Budapest},
       pages={317--346},
      review={\MR{MR1249720 (95b:05150)}},
}

\bib{MR1276825}{article}{
      author={R{\"o}dl, V.},
      author={Ruci{\'n}ski, A.},
       title={Threshold functions for {R}amsey properties},
        date={1995},
        ISSN={0894-0347},
     journal={J. Amer. Math. Soc.},
      volume={8},
      number={4},
       pages={917--942},
      review={\MR{MR1276825 (96h:05141)}},
}

\bib{MR1492867}{article}{
      author={R{\"o}dl, V.},
      author={Ruci{\'n}ski, A.},
       title={Ramsey properties of random hypergraphs},
        date={1998},
        ISSN={0097-3165},
     journal={J. Combin. Theory Ser. A},
      volume={81},
      number={1},
       pages={1--33},
         url={http://dx.doi.org/10.1006/jcta.1997.2785},
      review={\MR{MR1492867 (98m:05175)}},
}

\bib{MR2318677}{article}{
      author={R{\"o}dl, V.},
      author={Ruci{\'n}ski, A.},
      author={Schacht, M.},
       title={Ramsey properties of random {$k$}-partite, {$k$}-uniform
  hypergraphs},
        date={2007},
        ISSN={0895-4801},
     journal={SIAM J. Discrete Math.},
      volume={21},
      number={2},
       pages={442--460 (electronic)},
         url={http://dx.doi.org/10.1137/060657492},
      review={\MR{MR2318677 (2008d:05103)}},
}

\bib{up:sch}{unpublished}{
      author={Schacht, M.},
       title={Extremal results for random discrete structures},
        note={Submitted},
}

\end{biblist}
\end{bibdiv}

\end{document}